\numberwithin{equation}{section}
\theoremstyle{plain}
   \newtheorem{thm}{Theorem}[section]
   \newtheorem{lem}[thm]{Lemma}
   \newtheorem{prop}[thm]{Proposition}
   \newtheorem{cor}[thm]{Corollary}
   \newtheorem{conj}[thm]{Conjecture}
   \newtheorem{claim}[thm]{Claim}
\theoremstyle{definition}
   \newtheorem{defn}[thm]{Definition}
   \newtheorem{ex}[thm]{Example}
\theoremstyle{remark}
   \newtheorem{rem}[thm]{Remark}
\def\cd{{\mathrm{cd}}}      
\def\CH{{\mathrm{CH}}}      
\def\Coker{{\mathrm{Coker}}}
\def\Ker{{\mathrm{Ker}}}    
\def\Br{{\mathrm{Br}}}    
\def\et{{\text{\'{e}t}}}    
\def\Gal{{\mathrm{Gal}}}    
\def\H{{\mathrm{H}}}        
\def\Hom{{\mathrm{Hom}}}    
\def\Image{{\mathrm{Image}}}   
\def\Spec{{\mathrm{Spec}}}
\def\Hot{{\mathrm{Hot}}}
\def\Ind{{\mathrm{Ind}}}
\def\ch{{\mathrm{ch}}}    
\def\cF{{\mathcal F}}
\def\cX{{\mathcal X}}
\def\cO{{\mathcal O}}
\def\cA{{\mathcal A}}
\def\cC{{\mathcal C}}
\def\cZ{{\mathcal Z}}
\def\cS{{\mathcal S}}
\def\cK{{\mathcal K}}
\def\ZS{\bZ\cS}
\def\CorS{Cor\cS}
\def\Lam{\Lambda}
\def\lam{\lambda}
\def\k{\kappa}
\def\fm{{\frak{m}}}          
\def\fp{{\frak{p}}}          
\def\wt#1{\widetilde{#1}}
\def\ol#1{\overline{#1}}
\def\us#1#2{\underset{#1}{#2}}
\def\lim#1{\us{#1}{\varinjlim}}
\def\sumd#1#2{\underset{x\in {#1}_{#2}}\bigoplus}
\def\bR{{\mathbb R}}
\def\bZ{{\mathbb Z}}
\def\bQ{{\mathbb Q}}
\def\bL{{\mathbb L}}
\def\qz{{\bQ}/{\bZ}}
\def\qzl{{\bQ_\ell}/{\bZ_\ell}}
\def\qzp{{\bQ_p}/{\bZ_p}}
\def\zl{{\bZ}_\ell}
\def\qlz{{\bQ}_\ell/{\bZ}_\ell}
\def\zp{{\bZ}_p}
\def\pnz{\bZ/p^n\bZ}
\def\nz{\bZ/n\bZ}
\def\mz{\bZ/m\bZ}
\def\lz{\bZ/\ell\bZ}
\def\lnz{\bZ/\ell^n\bZ}
\def\indlim#1{\underset{{\underset{#1}{\longrightarrow}}}{\mathrm{lim}}\; }
\def\projlim#1{\underset{{\underset{#1}{\longleftarrow}}}{\mathrm{lim}}\; }
\def\rmapo#1{\overset{#1}{\longrightarrow}}
\def\rmapou#1#2{\overset{#1}{\underset{#2}{\longrightarrow}}}
\def\lmapo#1{\overset{#1}{\longleftarrow}}
\def\isom{\overset{\cong}{\longrightarrow}}
\def\tX{\tilde{X}}
\def\tY{\tilde{Y}}
\def\qaq{\quad\text{and}\quad}
\def\qfor{\quad\text{for }}
\def\qforall{\quad\text{for all }}
\def\qwith{\quad\text{with }}
\def\Kb{\ol K}
\def\otkb#1{{#1}_{\Kb}}
\def\Ln{\Lambda_n}
\def\Linfty{\Lambda_\infty}
\def\Mod{{Mod}}
\def\tX{\widetilde{X}}
\def\tY{\widetilde{Y}}
\def\tZ{\widetilde{Z}}
\def\tU{\widetilde{U}}
\def\EX#1#2#3{E^{#3}_{#1,#2}(X)}
\def\ZX#1#2#3{Z^{#3}_{#1,#2}(X)}
\def\BX#1#2#3{B^{#3}_{#1,#2}(X)}
\def\dX#1#2#3{d^{#3}_{#1,#2}(X)}
\def\ZBX#1#2#3#4{(Z^{#3}/B^{#4})_{#1,#2}(X)}
\def\KX#1#2#3{K^{#3}_{#1,#2}(X)}
\def\HXZ#1{H_{p+q}(\cZ_{#1}(X))}
\def\HXZa#1{H_{p+q-1}(\cZ_{#1}(X))}
\def\piX#1#2{\pi_{#1,#2}}
\def\delX#1#2{\delta_{#1,#2}}
\def\EU#1#2#3{E^{#3}_{#1,#2}(U)}
\def\ZU#1#2#3{Z^{#3}_{#1,#2}(U)}
\def\BU#1#2#3{B^{#3}_{#1,#2}(U)}
\def\ZBU#1#2#3#4{(Z^{#3}/B^{#4})_{#1,#2}(U)}
\def\ZBUn#1#2#3#4{(Z^{#3}/B^{#4})_{#1,#2}(U,\Ln)}
\def\EUnz#1#2#3{E^{#3}_{#1,#2}(U,\nz)}
\def\ZBUZ#1#2#3#4{(Z^{#3}/B^{#4})_{#1,#2}(U\backslash Z)}
\def\ZBZcapU#1#2#3#4{(Z^{#3}/B^{#4})_{#1,#2}(Z\cap U)}
\def\EY#1#2#3{E^{#3}_{#1,#2}(Y)}
\def\ZY#1#2#3{Z^{#3}_{#1,#2}(Y)}
\def\BY#1#2#3{B^{#3}_{#1,#2}(Y)}
\def\dY#1#2#3{d^{#3}_{#1,#2}(Y)}
\def\ZBY#1#2#3#4{(Z^{#3}/B^{#4})_{#1,#2}(Y)}
\def\KY#1#2#3{K^{#3}_{#1,#2}(Y)}
\def\HYZ#1{H_{p+q}(\cZ_{#1}(Y))}
\def\HYZa#1{H_{p+q-1}(\cZ_{#1}(Y))}
\def\HYZZ#1{H_{p+q}(\cZ_{#1}/\cZ_{#1-1}(Y))}
\def\piY#1#2{\pi_{#1,#2}}
\def\delY#1#2{\delta_{#1,#2}}
\def\EV#1#2#3{E^{#3}_{#1,#2}(V)}
\def\ZV#1#2#3{Z^{#3}_{#1,#2}(V)}
\def\BV#1#2#3{B^{#3}_{#1,#2}(V)}
\def\dV#1#2#3{d^{#3}_{#1,#2}(V)}
\def\ZBV#1#2#3#4{(Z^{#3}/B^{#4})_{#1,#2}(V)}
\def\BBV#1#2#3#4{(B^{#3}/B^{#4})_{#1,#2}(V)}
\def\KV#1#2#3{K^{#3}_{#1,#2}(V)}
\def\HVZ#1{H_{p+q}(\cZ_{#1}(V))}
\def\piV#1#2{\pi_{#1,#2}}
\def\delV#1#2{\delta_{#1,#2}}
\def\HstXZZ#1{H_{s+t}(\cZ_{#1}/\cZ_{#1-1}(X))}
\def\HstYZZ#1{H_{s+t}(\cZ_{#1}/\cZ_{#1-1}(Y))}
\def\HstVZ#1{H_{s+t}(\cZ_{#1}(V))}
\def\HstXZ#1{H_{s+t}(\cZ_{#1}(X))}
\def\HstoYZ#1{H_{s+t-1}(\cZ_{#1}(Y))}
\def\FF#1#2#3{\Phi_{#3}^{#1,#2}}
\def\tFF#1#2#3{\widetilde{\Phi}_{#3}^{#1,#2}}
\def\gF#1#2#3{g^{#1,#2}_{#3}}
\def\fF#1#2#3{f^{#1,#2}_{#3}}
\def\psiF#1#2#3{\psi^{#1,#2}_{#3}}
\def\tpsiF#1#2#3{\widetilde{\psi}^{#1,#2}_{#3}}
\def\Hempty#1#2{H_{#1}(#2)}
\def\HLLinfty{H(-,\Linfty)}
\def\HDL#1#2{H^D_{#1}(#2,\Lambda)}
\def\HDK#1#2{H^D_{#1}(#2,\cK)}
\def\HLn#1#2{H_{#1}(#2,\Ln)}
\def\HLLn{H(-,\Ln)}
\def\HDLn#1#2{H^D_{#1}(#2,\Ln)}
\def\HDLinfty#1#2{H^D_{#1}(#2,\Linfty)}
\def\Hn#1#2{H_{#1}(#2,\Ln)}
\def\Hinfty#1#2{H_{#1}(#2,\Linfty)}
\def\Hetinfty#1#2{H^{\et}_{#1}(#2,\Linfty)}
\def\KC#1{KC_H(#1)}
\def\KCLH#1{KC_H(#1,\Lambda_H)}
\def\KH#1#2{KH_{#1}(#2)}
\def\KHn#1#2{KH_{#1}(#2,\Lambda_n)}
\def\KHinfty#1#2{KH_{#1}(#2,\Lambda_\infty)}
\def\KHetnz#1#2{KH^{\et}_{#1}(#2,\nz)}
\def\KHetpnz#1#2{KH^{\et}_{#1}(#2,\pnz)}
\def\KHetlnz#1#2{KH^{\et}_{#1}(#2,\lnz)}
\def\KHetqlz#1#2{KH^{\et}_{#1}(#2,\qlz)}
\def\KHetqz#1#2{KH^{\et}_{#1}(#2,\qz)}
\def\graph#1{Graph(#1,\Lambda_H)}
\def\graphL#1{Graph(#1,\Lambda)}
\def\graphH#1#2{Graph_{#1}(#2,\Lambda_H)}
\def\graphHempty#1#2{Graph_{#1}(#2,\Lambda_H)}
\def\graphHempty#1#2{Graph_{#1}(#2)}
\def\graphHn#1#2{Graph_{#1}(#2,\Lambda_n)}
\def\graphHinfty#1#2{Graph_{#1}(#2,\Lambda_\infty)}
\def\graphHnz#1#2{Graph_{#1}(#2,\nz)}
\def\graphhom#1#2{\gamma^{#1}_{#2}}
\def\graphedge#1#2{\gamma\epsilon^{#1}_{#2}}
\def\graphedgen#1#2{\gamma\epsilon^{#1}_{#2,\Ln}}
\def\edgehom#1{\epsilon_{#1}}
\def\DWr#1#2{W_r\Omega^{#1}_{#2}}
\def\DWrlog#1#2{W_r\Omega^{#1}_{#2,log}}
\def\DWt#1#2{W_t\Omega^{#1}_{#2}}
\def\DWtlog#1#2{W_t\Omega^{#1}_{#2,log}}
\def\DWnulog#1#2{W_\nu\Omega^{#1}_{#2,log}}
\def\Th#1#2{\Theta_{#1}(#2)}
\def\WC#1{W_\bullet(#1)}
\def\tWC#1{\tilde{W}_\bullet(#1)}
\def\wtWC#1{\widetilde{W}_\bullet(#1)}
\def\ol#1{\overline{#1}}
\def\PK{P_K}
\def\Pm{P_\fm}
\def\BKlz#1{{\text{\bf (BK)}_{#1,\ell}}}
\def\Dql{{\text{\bf (D)}_{q,\ell}}}
\def\RS#1{{\text{\bf (RS)}_{#1}}}
\def\RES#1{{\text{\bf (RES)}_{#1}}}
\def\HMXr#1{H^{#1}_M(X,\bZ(r))}
\def\HMXd#1{H^{#1}_M(X,\bZ(d))}
\def\nt{\ell^n}
\def\mt{\ell^m}
\begin{document}

\author{Uwe Jannsen and Shuji Saito}
\title[Kato conjecture and motivic cohomology over finite fields]
{Kato conjecture and motivic cohomology\\
 over finite fields}
\maketitle

\pagenumbering{roman}

\setcounter{page}{1}
\tableofcontents
\begin{abstract}
For an arithmetical scheme $X$, K. Kato introduced a certain complex of
Gersten-Bloch-Ogus type whose component in degree $a$ involves Galois
cohomology groups of the residue fields of all the points of $X$ of dimension
$a$. He stated a conjecture on its homology generalizing the fundamental
exact sequences for Brauer groups of global fields. We prove the conjecture
over a finite field assuming resolution of singularities.
Thanks to a recently established result on resolution of singularities
for embedded surfaces, it implies the unconditional vanishing of
the homology up to degree $4$ for $X$ projective smooth over a finite field.
We give an application to finiteness questions
for some motivic cohomology groups over finite fields.
\end{abstract}

\smallskip
\noindent\small{{2010 Mathematics Subject Classification$:$ 14F20, 14F42, 14G15}\\
{Keywords$:$ Kato conjecture, varieties over finite fields, motivic cohomology}
}

\tableofcontents
\pagenumbering{arabic}

\section*{Introduction}
\bigskip

Let $\cC$ be a category of schemes of finite type over
a fixed base scheme $B$.
Let $\cC_\ast$ be the category with the same objects as $\cC$, but where
morphisms are just the proper morphisms in $\cC$.
Let $\Mod$ is the category of modules.
A \it homology theory $H=\{H_a\}_{a\in \bZ}$ on $\cC$ \rm is
a sequence of covariant functors:
$$
H_a(-):~\cC_*\rightarrow \Mod
$$
satisfying the following conditions:
If $i:Y\hookrightarrow X$ is a closed immersion in $\cC$,
with open complement $j:V\hookrightarrow X$, there is a long exact sequence
(called localization sequence)
$$
\cdots\rmapo{\partial} H_a(Y) \rmapo{i_*} H_a(X) \rmapo{j^*} H_a(V) \rmapo{\partial} H_{a-1}(Y)
\longrightarrow \cdots.
$$
which is functorial with respect to proper morphisms and open immersions in an obvious sense.
\medbreak

Given such $H$, Bloch and Ogus \cite{BO} constructed a spectral sequence of
homological type for every $X \in Ob(\cC)$, called the niveau spectral sequence
\begin{equation}\label{spectralsequence1}
E^1_{a,b}(X)=\bigoplus_{x\in X_a}H_{a+b}(x)~~\Rightarrow ~~H_{a+b}(X) \qwith
H_d(x)=\lim{V\subseteq \overline{\{x\}}} H_d(V).
\end{equation}
where $X_a$ denotes the set of the points $x\in X$ of dimension $a$
(see \S1 for our definition of dimension) and the limit is over all open
non-empty subschemes $V\subseteq \overline{\{x\}}$.
\medbreak

In this paper we are interested in such $H$ that satisfy the condition:
\begin{equation}\label{conditionKatohomology}
E^1_{a,b}(X)=0 \qforall b<0\text{ and all  } X\in Ob(\cC).
\end{equation}
We then write $\KC X$ for the complex:
$$
E^1_{\bullet,0}(X)\;:\; E^1_{0,0}(X) @<{d^1}<< E^1_{1,0}(X) @<{d^1}<<
E^1_{2,0}(X) @<{d^1}<< \cdots
$$
and $\KH a X$ for its homology group in degree $a$,
called the Kato homology of $X$ for the given homology theory $H$.
\medbreak

The most typical example is the homology theory $H = H^{\et}(-,\nz)[-1]$ on
the category $\cC$ of separated schemes over a finite field $F$, defined by
\begin{equation}\label{etalehomology}
H^\et_a(X,\nz)[-1]:=H^{1-a}(X_{\acute{e}t}, R\,f^{!}\nz),
\qfor f:X\rightarrow \Spec(F) \text{ in }\cC
\end{equation}
(where $Rf^!$ is the right adjoint of $Rf_!$ defined in [SGA 4], XVIII, 3.1.4.).
In this case $\KC X$ is the following complex introduced by Bloch-Ogus
\cite{BO} and Kato \cite{K}:
\begin{multline}\label{KCfinitefieldintro}
\cdots \sumd X a H^{a+1}(\k(x),\nz(a))\to
\sumd X {a-1} H^{a}(\k(x),\nz(a-1))\to \cdots \\
\cdots \to\sumd X 1 H^{2}(\k(x),\nz(1))\to
\sumd X 0 H^{1}(\k(x),\nz).
\end{multline}
Here we use the following notations.
For a field $L$ and an integer $n>0$, define the following
Galois cohomology groups: If $n$ is invertible in $L$, let
$H^i(L,\nz(j))=H^i(L,\mu_n^{\otimes j})$ where $\mu_n$ is the Galois module of
$n$-th roots of unity.
If $n=mp^r$ and $(p,m)=1$ with $p=\ch(L)>0$, let
\begin{equation}\label{def.nz}
H^i(L,\nz(j))=H^i(L,\mz(j))\oplus H^{i-j}(L,\DWrlog j L),
\end{equation}
where $\DWrlog j L$ is the logarithmic part of the de Rham-Witt sheaf
$\DWr j L$ [Il, I 5.7].
In the complex \eqref{KCfinitefieldintro} the term in degree $a$ is the direct
sum of the Galois cohomology of the residue fields $\k(x)$ of $x\in X_a$.
\medbreak

In \cite{K} a complex of the same shape is defined for any scheme $X$
of finite type over $\Spec(\bZ)$ and it is shown in \cite{JSS}
that this complex also arises from a certain homology theory
(on the category of schemes of finite type over $\Spec(\bZ)$)
via the associated spectral sequence \eqref{spectralsequence1}.
\medbreak

The Kato homology associated to $H = H^{\et}(-,\nz)[-1]$
is denoted by
$\KHetnz a X$, which is by definition the homology group in degree $a$ of
the complex \eqref{KCfinitefieldintro}.
A remarkable conjecture proposed by Kato is the following:

\begin{conj}\label{conjKato}
Let $X$ be either proper smooth over $B=\Spec(F)$ where $F$ is
a finite field (geometric case), or regular proper flat over $B=\Spec(\cO_k)$
where $k$ is a number field (arithmetic case).
Assume either $n$ is odd or $k$ is totally imaginary. Then
$$
\KHetnz a X \simeq
\left.\left\{\gathered
 \nz \\
 0 \\
\endgathered\right.\quad
\aligned
&\text{$a=0$}\\
&\text{$a\not=0$}
\endaligned\right.
$$
\end{conj}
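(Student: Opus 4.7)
I would prove the geometric case by induction on $d=\dim X$, with resolution of singularities as the key geometric input. The base case $d=0$ is immediate: $X$ is a finite disjoint union of spectra of finite extensions $E/F$, the complex $\KC X$ is concentrated in degree $0$, each summand $H^1(E,\nz)$ is canonically $\nz$ via the Frobenius on $\Gal(\ol E/E)\cong \wh{\bZ}$, and the degree map realises the desired isomorphism on each connected component.

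For the inductive step with $d\geq 1$, I would choose via Bertini (possibly after a finite extension of $F$, then descended by a transfer/norm argument which is available because we work with $\nz$-coefficients over a finite field) a smooth ample divisor $i\colon Y\hookrightarrow X$ with open complement $j\colon U\hookrightarrow X$. Functoriality of the niveau spectral sequence under the localization triangle for the homology theory $H^{\et}(-,\nz)[-1]$, together with the assumption \eqref{conditionKatohomology}, produces a short exact sequence of Kato complexes
$$
0\to \KC Y\to \KC X\to \KC U\to 0,
$$
and hence a long exact sequence relating $\KHetnz\bullet Y$, $\KHetnz\bullet X$, and $\KH\bullet U$. By the inductive hypothesis, $\KHetnz a Y=0$ for $a\neq 0$ and $\KHetnz 0 Y\cong\nz$, so the conjecture for $X$ reduces to showing $\KH a U=0$ for every $a\geq 1$ and to identifying the boundary $\KH 0 U\to\KHetnz 0 Y\cong\nz$ as surjective, whence $\KHetnz 0 X\cong\nz$ and $\KHetnz a X=0$ for $a\geq 1$.

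The crucial step is the vanishing of $\KH\bullet U$ for the non-proper open $U$, and this is where resolution of singularities enters in earnest. I would embed $U$ in a smooth projective compactification $\ol U$ whose boundary $D=\ol U\sm U$ is a simple normal crossing divisor, and then iterate the same localization argument along the stratification of $D$ by the intersections $D^{(k)}$ of its irreducible components. Each stratum has dimension $\leq d-1$ and is covered by the inductive hypothesis, while a Mayer-Vietoris type spectral sequence for the stratification propagates the vanishing of Kato homology in positive degrees from the strata and $\ol U$ to $U$ itself. For the prime-to-$p$ part of $\nz$ one invokes the Bloch-Kato (Rost-Voevodsky) theorem to identify Galois cohomology with Milnor $K$-theory; the $p$-primary part involving $\DWrlog j{-}$ requires the Bloch-Kato-Gabber/Gros theory of logarithmic de Rham-Witt sheaves to establish the corresponding purity and functoriality on each stratum.

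The main obstacle I foresee is precisely this open case: the inductive hypothesis is stated for smooth proper varieties and gives no direct control of $\KH\bullet U$, and bridging the gap is what forces the use of resolution of singularities to produce a good normal-crossing compactification. A secondary technical difficulty is the $p$-primary coefficient, where the Bloch-Kato identification of Galois cohomology with Milnor $K$-theory is unavailable and must be replaced by the logarithmic de Rham-Witt formalism---which, together with the need for resolution, explains why the result is obtained only in the geometric case and only conditionally on resolution of singularities.
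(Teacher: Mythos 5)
Your high-level reduction — induction on $d=\dim X$, choosing a smooth ample divisor $Y\subset X$, then using the localization sequence $0\to\KC Y\to\KC X\to\KC U\to 0$ together with the inductive hypothesis for $Y$ to reduce the statement for $X$ to vanishing of $\KH a U$ for $U=X-Y$ — does match the reduction the paper makes, and your remarks about the base case, the $p$-part, and the role of resolution are reasonable. But the mechanism you propose for the crucial step, namely proving $\KH a U=0$, cannot work as stated. You propose to compactify $U$ with SNC boundary $D$ and "iterate localization along the stratification of $D$" together with a Mayer--Vietoris spectral sequence, using the inductive hypothesis on the strata $D^{(k)}$. But the compactification $\ol U$ has dimension $d$ (indeed $\ol U=X$ if $Y$ is already smooth), so the localization sequence inevitably contains the term $\KHetnz a {\ol U}$, which is exactly what you are trying to control — the proposed induction is circular. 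Nothing in your scheme supplies an independent input in dimension $d$.

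The paper breaks this circle with two ingredients that your proposal omits. First, a genuine affine vanishing / weak Lefschetz input for the underlying homology theory: for $Y$ an irreducible smooth ample divisor, $H^{\et}_a(U,\qz)[-1]=0$ for $a\le d$ (condition (H2), proved via the affine Lefschetz theorem and Deligne's weight estimates). Second, and most delicately, a way to transfer this vanishing from $H_\ast(U)$ to the Kato homology $\KH a U = E^2_{a,0}(U)$ of the niveau spectral sequence: one must show the vanishing $\ZBU a b \infty b = 0$ for $b\ge 1$, so that the edge map $H_a(U)\to\KH a U$ is an isomorphism. This is precisely where the Fundamental Lemma (Theorem \ref{FL}), an intricate exactness statement for the pieces $\ZBU a b \infty s$ under localization, and the whole of \S 1, come in. Resolution of singularities (condition $\bf(RES)$) is then needed not merely to compactify $U$ — that is just $\bf(RS)$ — but to desingularize the support of a given class $\alpha\in\ZBU a b \infty b$ relative to the SNC boundary, so that a hypersurface section killing $\alpha$ can be chosen transversally and the Fundamental Lemma can be applied inductively to deduce injectivity of the restriction map. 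In addition, the paper's inductive statement is not the literal vanishing of $\KH a U$ but the stronger and more flexible assertion that $\KH a U\cong\graphH a {\Phi_\bullet}$ for any SNC log-pair $\Phi=(X,Y;U)$; it is this combinatorial description, rather than raw vanishing, that propagates through the desingularization steps. Your plan, which relies on localization alone, has no analogue of either ingredient and so cannot close the induction.
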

\medbreak

In case $\dim(X)=1$, i.e., if $X$ is a proper smooth curve over a finite field
with function field $k$ or $X=\Spec(\cO_k)$ as above,
the conjecture \ref{conjKato} rephrases the classical fundamental fact
in number theory that there is an exact sequence :
$$
0\to \Br(k)[n] \to \underset{x\in X_0}{\bigoplus}
 H^1(\k(x),\nz) \to \nz \to 0.
$$
Here $\Br(k)[n]$ is the $n$-torsion subgroup of the Brauer group of $k$
and $X_0$ is the set of the closed points of $X$ or the finite places of $k$.
Kato proved the conjecture in case $\dim(X)=2$.
The following result has been shown by
Colliot-Th\'el\`ene \cite{CT} and Suwa \cite{Sw} (geometric case) and
Jannsen-Saito \cite{JS1} (arithmetic case):

\begin{thm}\label{thm.CTSJS}
Let $\ell$ be a rational prime.
Let the assumption be as in \ref{conjKato} and assume $X$ is projective
over $B$. In the arithmetic case we further assume $X$ has good or semistable
reduction at each prime of $\cO_k$ and that $\ell$ is odd or $k$ is
totally imaginary. Then
$$
\KHetqlz a X  \simeq \left.\left\{\gathered
 \qzl \\
 0 \\
\endgathered\right.\quad
\aligned
&\text{$a=0$}\\
&\text{$0< a \leq 3$}
\endaligned\right.
$$
where
$$
\KHetqlz a X =\indlim n \KHetlnz a X.
$$
\end{thm}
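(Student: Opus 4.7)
The plan is to prove the theorem by induction on $d = \dim X$, starting from the classical one-dimensional case and using a hyperplane-section argument together with Deligne's weight estimates. For $d=1$ in the geometric case the assertion reduces to the fundamental Brauer-group sequence
$$ 0 \to \Br(k)[\ell^n] \to \bigoplus_{x\in X_0} H^1(\kappa(x),\lnz) \to \lnz \to 0 $$
for the function field $k = F(X)$ of the smooth projective curve $X$ over $F$, whose colimit over $n$ yields $\KHetqlz 0 X \simeq \qzl$; the Kato complex has no terms in degrees $\ge 2$, so the remaining vanishings are automatic. In the arithmetic case with $X = \Spec \cO_k$ the analogous input is Albert--Brauer--Hasse--Noether, the parity hypothesis on $\ell$ and $k$ eliminating the $2$-primary contribution of the real places.

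For $d \ge 2$ I would pick a smooth ample divisor $i\colon Y \hra X$ with complement $j\colon U \hra X$, chosen by Bertini in the geometric case (and, in the arithmetic case, by a variant that uses the good/semistable reduction hypothesis) so that $Y$ is still covered by the induction hypothesis. Since $Y$ is closed in $X$ we have a disjoint union $X_a = Y_a \sqcup U_a$, and the $d^1$-differentials of the Kato complex respect the subcomplex supported on $Y$, giving a short exact sequence $0 \to \KC Y \to \KC X \to \KC U \to 0$ and hence a long exact sequence
$$ \cdots \to \KHetqlz{a}{Y} \to \KHetqlz{a}{X} \to \KHetqlz{a}{U} \to \KHetqlz{a-1}{Y} \to \cdots. $$
By induction the Kato homology of $Y$ is known in the range $0 \le a \le 3$, reducing the problem to the vanishing of $\KHetqlz a U$ for $1 \le a \le 3$ together with the computation of $\KHetqlz 0 X$.

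The vanishing for $U$ would then be obtained by Poincar\'e--Lefschetz duality combined with Deligne's Weil conjectures. Using the Bloch--Ogus Gersten resolution of the sheaves $R^q \pi_* \qzl(n)$, and the Bloch--Kato / Merkurjev--Suslin--Rost theorem in weights $\le 3$ to translate Milnor symbols in the residue fields into Galois cohomology, one identifies $\KHetqlz a U$ with a subquotient of $H^{a+1}(\ol U, \qzl(a))^{\Gal(\ol F/F)}$. Purity for the smooth projective $X$, propagated to $U$ via the \'etale localization triangle for $(X, Y, U)$, shows that the relevant pieces are mixed of weights $\le 1-a$, whose Frobenius invariants vanish for $a \ge 2$; the case $a=1$ is absorbed into the base case $d=2$ via Kato's original surface computation.

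The hardest step is $a = 3$: it simultaneously requires the Bloch--Kato conjecture in degree $3$ (Merkurjev--Suslin--Rost), and a delicate weight calculation ruling out weight-$0$ contributions in $H^4$ of a smooth projective variety of dimension $\ge 3$, where algebraic cycle classes could in principle interfere. In the arithmetic case one must in addition handle the special fibres of bad (though semistable) reduction, where the Rapoport--Zink weight spectral sequence is needed to transfer the generic-fibre weight bounds through the nearby-cycle complex to the bad fibres; this is where the semistability hypothesis becomes indispensable.
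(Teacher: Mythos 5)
This statement (Theorem~\ref{thm.CTSJS}) is \emph{not proved in this paper}: the authors cite it as established by Colliot-Th\'el\`ene and Suwa in the geometric case and by Jannsen--Saito [JS1] in the arithmetic case, and they use it as a benchmark for their own Theorem~\ref{mainthmintro1}, which supersedes the geometric case. So there is no ``paper's own proof'' to compare against line by line; what follows compares your sketch against what those references, and the machinery developed later in this paper, actually have to do.

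Your high-level plan---induct on dimension, cut with a Bertini-generic ample divisor $Y$, use the exact sequence $0\to\KC Y\to\KC X\to\KC U\to0$, and kill the open part $U$ by affine Lefschetz and Deligne's weight bounds---is genuinely in the spirit of [CT], [Sw], [JS1] and of \S\S3--4 here (cf.\ Lemma~\ref{lemL2} and the Lefschetz condition~{\bf(L)}). However, there is a real gap at the step where you ``identify $\KHetqlz a U$ with a subquotient of $H^{a+1}(\ol U,\qzl(a))^{\Gal(\ol F/F)}$.'' By definition $\KH a U = E^2_{a,-1}(U)$ for a homology theory leveled above~$1$, and since $E^1_{p,q}=0$ for $q<-1$ the differentials $d^r$ for $r\ge 2$ are all \emph{outgoing}: $E^2_{a,-1}\supseteq E^3_{a,-1}\supseteq\cdots\supseteq E^\infty_{a,-1}$. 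Only $E^\infty_{a,-1}$ is a subquotient of the abutment $H_{a-1}(U)=H^{2d-a+1}_{\et}(U,\Lambda(d))$; the quotients $E^r_{a,-1}/E^{r+1}_{a,-1}$ for $r\ge2$ are killed by higher differentials landing in $E^r_{a-r,r-2}$ and are invisible to any single cohomology group of $U$. Showing $H_{a-1}(U)=0$ therefore does \emph{not} show $\KH a U=0$; you also have to control the transient classes, i.e.\ show $(Z^\infty/B^b)_{a-b-1,b}(U)=0$ for $b\ge1$. That is precisely the content of Theorem~\ref{FL} (the ``fundamental lemma'') and the inductive arguments of \S4, and it is the technical heart of the whole paper. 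For $a=1$ there are no higher differentials so your reduction is fine; already at $a=2$ the differential $d^2\colon E^2_{2,-1}\to E^2_{0,0}$ can be nonzero, and at $a=3$ both $d^2$ and $d^3$ intervene. Your sketch does not address this.

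A second, smaller gap: your weight argument for $a=3$ is left as an acknowledged difficulty, but it is not just ``delicate''---as stated it does not work, because $H^4(\ol X,\qzl(2))$ of a smooth projective $3$-fold has nonzero Frobenius invariants (codimension-$2$ cycle classes), so one cannot simply declare ``weights $\le 1-a$, invariants vanish for $a\ge 2$.'' What saves the situation in Lemma~\ref{lemL2} is that the relevant group is $H^d(\ol U,\Linfty(d))_{G_K}$ for $U$ \emph{affine}, where the affine Lefschetz theorem localizes the problem to cohomological degree exactly $d$ with twist $d$, and there the weights of $H^d(\ol X,T(d))$ and of $H^{d-1}(\ol Z,T(d-1))$ are strictly negative when $d>1$. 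Your argument mixes degrees and twists in a way that would let a weight-$0$ cycle class slip through. Finally, for $\qzl$-coefficients the invocation of Bloch--Kato in degree $3$ is unnecessary (it is needed in this paper only to pass from $\qzl$ to $\lnz$ via condition~$\Dql$); [CT] needs Merkurjev--Suslin in degree $2$, not Rost's norm-residue theorem.
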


In this paper we propose a method to approach the geometric case of
conjecture \ref{conjKato} in general.
The main result of this paper is the following:

\begin{thm}\label{mainthmintro1}
Let $X$ be projective smooth of dimension $d$ over a finite field $F$.
Let $t\geq 1$ be an integer.
Then we have
$$
\KHetqz a X  \simeq \left.\left\{\gathered
 \qz \\
 0 \\
\endgathered\right.\quad
\aligned
&\text{$a=0$}\\
&\text{$0< a \leq t$}
\endaligned\right.
$$
if either $t\leq 4$ or condition {$\RS {d}$}, or {$\RES {t-2}$} (see below) holds.
Moreover the same conclusion holds if
$\qz$ is replaced by $\lnz$ for a prime $\ell$, provided $\BKlz t$ holds (see below).
\end{thm}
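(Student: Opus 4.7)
The plan is to build on Theorem~\ref{thm.CTSJS} (which covers $a\leq 3$ prime by prime with $\qzl$-coefficients) and extend the range to $a\leq t$ by a double induction on $t$ and on the dimension $d=\dim(X)$, using resolution of singularities in the precise forms $\RS{d}$ and $\RES{t-2}$ together with the Bloch-Kato conjecture $\BKlz{t}$ for primes $\ell\neq\ch F$.

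First I would reduce to the prime-by-prime, mod-$\ell^n$ case. Since $\qz=\bigoplus_\ell\qzl=\indlim n \nz$ and Kato homology commutes with filtered colimits of coefficients and with direct sums, it suffices to establish $\KHetlnz 0 X\simeq\lnz$ and $\KHetlnz a X=0$ for $0<a\leq t$, for every prime $\ell$ and every $n\geq 1$. The base case $t\leq 3$ is Theorem~\ref{thm.CTSJS}. Assume by induction that the statement holds in degrees $<t$ for every smooth projective variety over $F$ of arbitrary dimension; I will verify it in degree $t$ for $X$.

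The central step is a reduction to smaller dimension by a hypersurface section. Using $\RS{d}$, and $\RES{t-2}$ to resolve the base/indeterminacy locus of a generic pencil, one produces a smooth very ample divisor $Y\hookrightarrow X$ with complement $U=X\setminus Y$ smooth and affine. The localization sequence of the homology theory $H^{\et}(-,\lnz)[-1]$ for the pair $(X,Y)$ yields the long exact sequence
\[
\cdots\to\KHetlnz{a} Y\to\KHetlnz{a} X\to\KHetlnz{a} U\to\KHetlnz{a-1} Y\to\cdots.
\]
The dimensional inductive hypothesis applied to the smooth projective $(d-1)$-dimensional $Y$ kills $\KHetlnz{a} Y$ for $0<a\leq t$, so the problem is reduced to showing $\KHetlnz{a} U=0$ in the same range.

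On the affine smooth $U$ one now invokes $\BKlz{t}$: for each $x\in U_a$ with $a\leq t$ it identifies $H^{a+1}(\k(x),\lnz(a))$ with the mod $\ell^n$ quotient of a Milnor-$K$ symbol group, so $\KC U$ may be compared with the Gersten complex for motivic cohomology modulo $\ell^n$. Combined with the Gersten conjecture for Milnor $K$-theory on smooth schemes and with the affine Lefschetz vanishing for étale cohomology of $U$, this forces the needed vanishing. The $p$-primary component ($p=\ch F$) is handled in parallel, replacing the Bloch-Kato identification by the Bloch-Gabber-Kato theorem for the Gersten complex of $\DWnlog{\bullet}{U}$, which explains why no analogue of $\BKlz{t}$ at $p$ is required in the statement.

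The principal obstacle is the production of the smooth ample $Y$ with affine complement in positive characteristic, where a naive Bertini argument fails: this is exactly what $\RS{d}$ together with $\RES{t-2}$ supplies. For $t\leq 4$ only embedded resolution for subschemes of dimension $\leq 2$ is needed, which is now unconditional by the recent result alluded to in the abstract, whence the unconditional low-degree case.
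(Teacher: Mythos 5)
Your plan captures the correct first move (reducing, by a Bertini-type section and a localization sequence, to vanishing of Kato homology of an affine open $U$), but it has two serious gaps, and its logical architecture is not the one the paper actually uses.

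\textbf{1. The $\qz$-case does not require Bloch-Kato, but your reduction makes it so.} You begin by writing $\qz=\indlim_n\nz$ and reducing the $\qz$-statement to the mod-$\ell^n$ statement, which you then prove via $\BKlz{t}$. This proves strictly less than the theorem asserts: the $\qz$-part of Theorem \ref{mainthmintro1} holds \emph{without} any Bloch-Kato hypothesis. The paper's logic is the reverse of yours: it first proves the $\qz$-version by verifying the conditions $(H1)$--$(H3)$ of Theorem \ref{mainthmintro2} for $H=H^{\et}(-,\qz)[-1]$ — these conditions rest on divisibility of the coefficient module and on Deligne's weight estimates, hence they genuinely need divisible coefficients — and only then deduces the $\lnz$-version in \S5 from the $\qz$-version plus the divisibility condition $\Dql$, which is where Bloch-Kato enters.

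\textbf{2. "Affine Lefschetz forces the vanishing" conflates $E^\infty$- with $E^2$-vanishing.} This is the crux of the whole paper and cannot be waved away. The Kato homology $\KHetlnz{a}{U}$ is the $E^2_{a,-1}$-term of the niveau spectral sequence, whereas the affine Lefschetz theorem (the paper's $(H2)$) controls the abutment $H^{\et}_{a-1}(U)$, i.e. the $E^\infty$-terms. Since the theory is leveled above $1$, no differentials map into $E^r_{a,-1}$, so $E^\infty_{a,-1}$ is a quotient of $E^2_{a,-1}$ by the images of the outgoing differentials $d^r_{a,-1}$ for $r\geq 2$. Vanishing of $E^\infty_{a,-1}$ alone therefore only says that $E^2_{a,-1}$ is exhausted by those images — it does \emph{not} give $E^2_{a,-1}=0$. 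What is needed is that the outgoing differentials vanish, i.e. the statement
$$\ZBU{a}{b}{\infty}{b+e}=0\quad(b\geq 1-e),$$
which is exactly Theorem \ref{mainthmIv}/\ref{mainthmIIv}. Proving that requires the Fundamental Lemma (Theorem \ref{FL}) on exactness of the sequence $\ZBZcapU{p}{q}{\infty}{q+e}\to\ZBU{p}{q}{\infty}{q+e}\to\ZBUZ{p}{q}{\infty}{q+e}$, together with the graph-homology description of $\KH{a}{U}$ for ample log pairs (the Lefschetz condition {\bf (L)}) and a delicate induction. None of this appears in your sketch, and I see no shortcut via "the Gersten conjecture for Milnor $K$-theory" — Gersten exactness is a local statement and does not by itself kill the global $E^2$-terms.

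A smaller point: $\RES{t-2}$ is not used to resolve a base locus of a pencil when constructing the smooth ample $Y$ — over a finite field Poonen's Bertini does that with no resolution input. It is used in the proof of Claim \ref{mainclaim} to put the support $W$ of a given cycle class in good position (regular and transverse to the boundary divisor) \emph{before} choosing the hypersurface sections $H_1,\dots,H_N$ through it; this is what makes the inductive appeal to the Fundamental Lemma possible.
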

\medbreak

Now we explain the conditions used in the above theorem. The first two concern resolution of
singularities:
\begin{enumerate}
\item[{$\RS d$}] :
For any $X$ integral and proper of dimension $\leq d$ over $F$,
there exists a proper birational morphism
$\pi: X'\to X$ such that $X'$ is smooth over $F$.
For any $U$ smooth of dimension $\leq d$ over $F$, there is an open immersion
$U\hookrightarrow X$ such that $X$ is projective smooth over $F$ with $X-U$,
a simple normal crossing divisor on $X$.
\end{enumerate}

\begin{enumerate}
\item[{$\RES t$}] :
For any smooth projective variety $X$ over $F$,
any simple normal crossing divisor $Y$ on $X$ with $U=X-Y$,
and any integral closed subscheme $W\subset X$ of dimension$\leq t$
such that $W\cap U$ is regular,
there exists a smooth projective variety $X'$ over $F$ and a birational proper map
$\pi\,:\, X'\to X$ such that
$\pi\,:\,\pi^{-1}(U)\simeq U$, and $Y'=X'-\pi^{-1}(U)$ is
a simple normal crossing divisor on $X'$,
and the proper transform of $W$ in $X'$ is regular and intersects transversally
with $Y'$.
\end{enumerate}
\medbreak

We note that a proof of {$\bf (RES)_2$} is given in \cite{CJS} based on
ideas of Hironaka. This enables us to obtain the unconditional vanishing
of the Kato homology with $\qz$-coefficient in degrees $a\leq 4$ in Theorem \ref{mainthmintro1}.
\bigskip

Let $\ell$ be a prime and $L$ be a field.
Recall that there is a symbol map (\cite{Mi} and \cite{BK}, \S2):
$$h^t_{L,\ell} : K^M_t(L) \to H^t(L,\lz(t))$$
where $K^M_t(L)$ denotes the Milnor $K$-group of $L$.
It is conjectured that $h^t_{L,\ell}$ is surjective.
The conjecture is called the Bloch-Kato conjecture in case $l\not=\ch(L)$.
We introduce the following condition:
\medbreak

\begin{enumerate}
\item[{$\BKlz {t}$}] :
For any finitely generated field $L$ over $F$,
$h^t_{L,\ell}$ is surjective.
\end{enumerate}
\medbreak

The surjectivity of $h^t_{L,\ell}$ is known if
$t=1$ (the Kummer theory) or
$t=2$ (Merkurjev-Suslin \cite{MS}) or
$\ell =\ch(L)$ (Bloch-Gabber-Kato \cite{BK}) or
$\ell =2$ (Voevodsky \cite{V1}).
It has been announced by Rost and Voevodsky (\cite{SJ} and \cite{V2})
that it holds in general.
\bigskip

In fact Theorem \ref{mainthmintro1} will be deduced from the following
more general result:

\begin{thm}\label{mainthmintro2}
Let $H$ be a homology theory on the category $\cC$ of separated schemes over
$B=\Spec(k)$ for a field $k$, which satisfies \eqref{conditionKatohomology}.
Assume the following conditions:
\begin{itemize}
\item[$(H1)$]
If $f:X\to B=\Spec(k)$ is smooth projective of dimension $\leq 1$
with $X$ connected (but not necessarily geometrically irreducible over $k$), then
$$
f_*\;:\; \Hempty {0} X \to \Hempty {0} {B}
$$
is an isomorphism if $\dim(X)=0$ and injective if $\dim(X)=1$.
\item[$(H2)$]
If $X$ is smooth projective of dimension $>1$ over $B$,
$Y\subset X$ is an irreducible smooth ample divisor, and $U=X-Y$, then
$$ \Hempty {a} {U} =0\qfor a\leq d=\dim(X).$$
\item[$(H3)$]
If $X$ is a smooth projective curve over $B$ and $U\subset X$ is a dense affine open
subset, then
$$ \Hempty {a} {U} =0\qfor a\leq 0\,,$$
and the boundary map $\Hempty {1} U @>{\partial}>> \Hempty {0} Y$ is
injective, where
$Y=X-U$ with reduced subscheme structure.
\end{itemize}
Let $X$ be projective smooth of dimension $d$ over $B$.
Let $t\geq 1$ be an integer with $t\leq d$.
Assume either $t\leq 4$ or $\bf (RS)_{d}$ or $\bf (RES)_{t-2}$.
Then we have
$$ \KH a X = 0 \qforall 0< a\leq t.$$
\end{thm}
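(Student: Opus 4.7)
The plan is to prove the statement by induction on $d=\dim X$, reducing the $d$-dimensional case to a smooth ample hyperplane section $Y$ of dimension $d-1$. The engine of the reduction is the short exact sequence of Kato complexes
\[
0 \to \KC Y \to \KC X \to \KC U \to 0
\]
attached to a closed decomposition $X = Y \sqcup U$: it arises because the points of $X$ of any given dimension partition into points of $Y$ and points of $U$, and because the differentials in $\KC{-}$ respect the localization sequence for $H$. It produces the long exact sequence
\[
\cdots \to \KH{a+1} U \to \KH a Y \to \KH a X \to \KH a U \to \KH{a-1} Y \to \cdots,
\]
which will be applied repeatedly.

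For the base case $d\leq 1$: if $d=0$ the complex $\KC X$ is supported in degree zero, giving $\KH a X = 0$ for $a>0$ automatically; if $d=1$, the Kato complex of the smooth projective curve $X$ reads $\bigoplus_{x\in X_0} H_0(x) \xleftarrow{d^1} H_1(\eta_X)$, and the direct limit over dense opens $V\subset X$ of the injections $H_1(V)\hookrightarrow H_0(X\setminus V)$ supplied by (H3) shows that $d^1$ is injective, whence $\KH 1 X = 0$.

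For the inductive step ($d\geq 2$), I would choose a smooth irreducible ample divisor $Y\subset X$ (by Bertini when $k$ is infinite, by Poonen's Bertini theorem when $k$ is finite) and set $U=X\setminus Y$. Axiom (H2) then delivers $H_a(U)=0$ for $a\leq d$. The target is to show $\KH a U = 0$ for $0<a\leq t$; granted this, the long exact sequence together with the inductive hypothesis applied to $Y$ (which has dimension $d-1$ and inherits $\RS d$ or $\RES{t-2}$, with the residual case $a=d=t$ trivial since $Y_d=\emptyset$) yields $\KH a X = 0$.

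The hard part will be passing from $H_a(U)=0$ to $\KH a U =0$. The vanishing hypothesis \eqref{conditionKatohomology} kills every incoming differential to $(a,0)$ in the niveau spectral sequence of $U$, so $E^\infty_{a,0}(U)$ embeds in $E^2_{a,0}(U)=\KH a U$; thus $H_a(U)=0$ only forces $E^\infty_{a,0}(U)=0$ and does not immediately give $\KH a U =0$ unless the outgoing differentials from $(a,0)$ also vanish. The resolution hypotheses enter here in the form of a Gersten-type degeneration: $\RS d$ lets one desingularize and compare $\KC U$ with the Kato complex of a smooth ambient on which the spectral sequence is controlled, while $\RES{t-2}$ furnishes embedded resolutions along the supports of boundary cycles of dimension $\leq t-2$, progressively killing the outgoing differentials in an induction on the niveau. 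The unconditional case $t\leq 4$ rides entirely on $\RES 2$ of [CJS]. Executing this degeneration through a careful moving of supports inside $U$, and tracking the inductive hypotheses through the long exact sequence, is the main technical content.
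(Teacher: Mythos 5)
Your top-level reduction is indeed the paper's: cut by a smooth irreducible ample hyperplane section $Y$ (Bertini, or Poonen over a finite field), use the short exact sequence of Kato complexes $0\to \KC Y\to \KC X\to \KC U\to 0$, and try to show $\KH a U=0$ for $U=X\setminus Y$ using $(H2)$. You also correctly isolate the crux: since \eqref{conditionKatohomology} kills incoming differentials into $(a,0)$, we have $E^2_{a,0}(U)=Z^1_{a,0}(U)$, while $H_a(U)=0$ only gives $Z^\infty_{a,0}(U)=E^\infty_{a,0}(U)=0$, so one must kill the outgoing $d^r$ from $E^r_{a,0}(U)$, equivalently show that $Z^\infty_{a-r,r-1}(U)/B^r_{a-r,r-1}(U)=0$.

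The gap is precisely at this point, and it is a genuine one rather than a routine omission. Your phrase ``tracking the inductive hypotheses through the long exact sequence'' suggests you want to restrict cycles to smaller opens $V\subset U$ and argue as if the restriction $Z^\infty/B^r(U)\to Z^\infty/B^r(V)$ were controlled by a localization sequence. But for $r\geq 3$ there is no long exact sequence on the $E^r$-page, and a fortiori none on the groups $Z^\infty/B^r$. The paper's substitute is the Fundamental Lemma (Theorem \ref{FL} of \S1): the sequence
\[
\ZBY pq \infty {q+e}\to \ZBX pq \infty {q+e}\to \ZBV pq \infty {q+e}
\]
is exact at the middle term \emph{provided} one has a family of auxiliary vanishings of $\ZBV {p-a}{q+a}\infty{q+a+e}$ and $\ZBY {p-a}{q+a}\infty{q+a+e}$ in higher filtration. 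Those auxiliary vanishings are what the descending induction on $l$ in the paper's $\Th l$ supplies, and they are also why the paper must prove the stronger statement
$\ZBU ab \infty {b+e}=0$ for all $b\geq 1-e$ with $a+b=q-1-e$, rather than just $\KH q U=0$. Your proposal does not contain this lemma or any analogue of it, and without it, after restricting a representative of an outgoing differential to a smaller open $V$ where it dies, you have no way to conclude that it was already zero on $U$.

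A second structural gap: your induction runs over smooth projective $X$ of dimension $<d$, but the scheme you actually need to handle in the inductive step is the affine $U$, and in the course of moving supports you pass to log pairs obtained by blowing up a singular $\mathrm{Supp}(\alpha)$ and intersecting with further ample hypersurfaces. The paper therefore strengthens the inductive statement to all log pairs (Theorem \ref{mainthmIv}), phrased via $H$-cleanness and the graph complex $\graphL{\Phi_\bullet}$, so that $\KH a U\cong \graphH a {\Phi_\bullet}$ makes sense and the Lefschetz condition $(L)$ deduced from $(H1)$--$(H3)$ can be invoked. The graph-complex formulation is also what makes $\KH a U=0$ emerge when $Y$ is irreducible ample; it is not something you get for free from $H_a(U)=0$. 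Finally, the role of $\RES{t-2}$ is not merely to smooth the situation: after blowing up the support $W$ one chooses, via Bertini, a chain of hypersurfaces $H_1,\dots,H_N$ so that at each stage the complement is an ample log pair and the Fundamental Lemma applies with its auxiliary hypotheses verified by the double induction. None of this geometric and homological-algebraic machinery appears in the proposal, so the passage from $H_a(U)=0$ to $\KH a U=0$, which is the heart of the theorem, remains unproved.
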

\medbreak

Theorem \ref{mainthmintro1} follows from \ref{mainthmintro2} by verifying
that the homology theory $H = H^{\et}(-,\qz)[-1]$ (see \eqref{etalehomology})
satisfies the conditions of \ref{mainthmintro2}. This is done by using
the affine Lefschetz theorem and the Weil conjecture proved by Deligne \cite{D}.
We will also give an example of a homology theory other than
$H^{\et}(-,\qz)[-1]$, which satisfies the conditions of \ref{mainthmintro2}
(see Lemma \ref{lemL2}).
\bigskip

In what follows we explain an application of Theorem \ref{mainthmintro1}
to finiteness result for motivic cohomology of smooth schemes
over a finite field.

Let $X$ be a connected smooth scheme over a finite field $F$ and let
$$\HMXr q=  CH^r(X,2r-q) = H_{2r-q}(z^r(X,\bullet))$$
be the motivic cohomology of $X$ defined as Bloch's higher Chow group,
where $z^r(X,\bullet)$ is Bloch's cycle complex \cite{B1}.
We will review the definition in \S6.
A `folklore conjecture', generalizing the analogous conjecture of Bass on
$K$-groups, is that $\HMXr q$ should be finitely generated.
Except for the case of $\dim(X)=1$ where this is known for all $q$ and $r$ (Quillen),
the only other general case where the finite generation is is known is
$\HMXd {2d} =\CH^d(X)=\CH_0(X)$ where $d=\dim(X)$, which is
a consequence of higher dimensional class field theory
(\cite{B3}, \cite{KS1} and \cite{CTSS}).

One way to approach the problem is to look at
an \'etale cycle map constructed by Geisser and Levine \cite{GL2} :
\begin{equation}\label{ccmap}
\rho^{r,q}_X \;:\; \CH^r(X,q;\nz) \to H^{2r-q}_{\et}(X,\nz(r)),
\end{equation}
Here
$$
CH^r(X,q; \nz) = H_{q}(z^r(X,\bullet)\otimes^{\bL} \nz)\,,
$$
is the higher Chow group with finite coefficients, which fits
into a short exact sequence:
$$0\to CH^r(X,q)/n \to CH^r(X,q; \nz) \to CH^r(X,q-1)[n] \to 0,$$
and $\nz(r)$ is the complex of \'etale sheaves on $X$:
$$
\nz(r)=\mu_m^{\otimes r} \oplus \DWnulog r X[-r],
$$
if $n=mp^r$ and $(p,m)=1$ with $p=\ch(F)$
(cf. \eqref{def.nz} and \eqref{Tatetwist}). Using finiteness results on
\'etale cohomology, the injectivity of
$\rho^{q,r}_X$ would imply a result which relates to the folklore
conjecture like the weak Mordell-Weil theorem relates to the strong one.
\medbreak

In case $r>d:=\dim(X)$ it is easily shown that $\rho_X^{r,q}$ is
an isomorphism assuming the Bloch-Kato conjecture (see \ref{cyclekh}).
An interesting phenomenon emerges for $\rho_X^{r,q}$ with $r=d$.
The Bloch-Kato conjecture implies that there is a long exact sequence
(see \ref{cyclekh}):
\begin{multline}\label{les1}
\cdots\to
\KHetnz {q+2} X \to \CH^d(X,q;\nz) @>{\rho_X^{d,2d-q}}>>
H^{2d-q}_{\et}(X,\nz(d)) \\
\to \KHetpnz {q+1} X \to\cdots
\end{multline}

Hence Theorem \ref{mainthmintro1} implies the following:

\begin{thm}\label{cor1intro}
Let $X$ be smooth projective of pure dimension $d$ over a finite field $F$.
Let $t,n\geq 1$ be integers.
Assume $\BKlz {t+2}$ for all primes $l|n$.
Assume further either $t\leq 2$ or {$\bf(RS)_{d}$} or {$\bf(RES)_{t}$}.
Then
$$
\rho^{d,q}_X \;:\; \CH^d(X,q,\nz) \isom H^{2d-q}_{\et}(X,\nz(d))
\qforall q\leq t.
$$
In particular $ \CH^d(X,q,\nz)$ is finite under the assumption.
\end{thm}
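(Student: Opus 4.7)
The approach is to combine the Kato-homology vanishing of Theorem \ref{mainthmintro1} with the long exact sequence \eqref{les1}. First I would reduce to the case where $n$ is a prime power. Writing $n = \prod_{\ell \mid n}\ell^{v_\ell(n)}$ via the Chinese remainder theorem, the complex $\nz(d)$ splits as a direct sum of $\bZ/\ell^{v_\ell(n)}\bZ(d)$ over the prime divisors of $n$, and this decomposition is respected by the higher Chow groups with finite coefficients, by \'etale cohomology, and by Kato homology; in particular the cycle map $\rho^{d,q}_X$ and the entire long exact sequence \eqref{les1} split compatibly. It therefore suffices to prove the isomorphism when $n = \ell^\nu$ is a prime power, the general statement following by summing.

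For such an $n$, the relevant segment of \eqref{les1} reads
\begin{equation*}
\KHetnz{q+2}{X} \longrightarrow \CH^d(X, q; \nz) \os{\rho^{d,q}_X}{\longrightarrow} H^{2d-q}_{\et}(X, \nz(d)) \longrightarrow \KHetnz{q+1}{X}.
\end{equation*}
Hence, to establish that $\rho^{d,q}_X$ is an isomorphism for every $q$ with $0 \leq q \leq t$, it suffices to show the vanishing
$$\KHetnz{a}{X} = 0 \qforall 1 \leq a \leq t+2.$$
This is precisely the $\lnz$-variant of Theorem \ref{mainthmintro1}, applied with the integer $t+2$ playing the role of $t$. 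The hypotheses align exactly: the Bloch-Kato condition required is $\BKlz{t+2}$ for our fixed prime $\ell$, which is guaranteed by the assumption that $\BKlz{t+2}$ holds for all primes dividing $n$; and the resolution condition required becomes either $t+2 \leq 4$ (i.e.\ $t \leq 2$), or $\RS{d}$, or $\RES{(t+2)-2} = \RES{t}$, each matching exactly one of the listed alternatives.

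The finiteness of $\CH^d(X, q; \nz)$ is then automatic from the resulting isomorphism with $H^{2d-q}_{\et}(X, \nz(d))$, which is finite by standard results on \'etale cohomology of smooth projective varieties over a finite field with finite coefficients (the prime-to-$p$ part by constructibility of $\mu_m^{\otimes d}$, and the $p$-part by coherence of $\DWnulog{d}{X}$). I do not expect any substantive obstacle in the proof of this corollary: the entire content resides in Theorem \ref{mainthmintro1}, and once its Kato-homology vanishing is in hand, the deduction of Theorem \ref{cor1intro} is pure bookkeeping in the long exact sequence \eqref{les1}.
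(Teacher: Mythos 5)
Your proof is correct and follows essentially the same route as the paper, which after Lemma \ref{cyclekh} simply notes that Theorem \ref{cor1intro} is an immediate consequence of Theorem \ref{mainthmKC1} (the $\lnz$-variant of Theorem \ref{mainthmintro1}) and the long exact sequence of that lemma. You spell out the Chinese-remainder reduction to prime-power coefficients, which the paper leaves implicit, and verify the bookkeeping of the resolution and Bloch--Kato hypotheses with $t+2$ in place of $t$; both are exactly as intended.
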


Using results of \cite{Kah}, \cite{Ge1} and \cite{J2} generalizing a seminal
result of Soul\'e \cite{So}, we deduce from \ref{cor1intro} the following:

\begin{cor}\label{cor2intro}
Let the assumption be as in \ref{cor1intro}.
Assume further that $X$ is finite-dimensional in the sense of
Kimura \cite{Ki} and O'Sullivan \cite{OSu}
(which holds if $X$ is a product of abelian varieties and curves).
Then there is an isomorphism of finite groups
$$
\CH^d(X,q) \simeq \underset{\text{all prime $l$}}{\bigoplus}
H^{2d-q}_{\et}(X,\zl(d))\qforall 1 \leq q\leq t.
$$
\end{cor}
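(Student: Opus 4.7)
The plan is to combine Theorem \ref{cor1intro} with the rational vanishing $\CH^d(X,q')\otimes\bQ=0$ for $q'\ge 1$ that the Kimura--O'Sullivan hypothesis supplies via \cite{So,Kah,Ge1,J2}.

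Fix $1\le q\le t$ and a prime $\ell$. Theorem \ref{cor1intro} identifies $\CH^d(X,q;\bZ/\ell^m\bZ)$ with the finite group $H^{2d-q}_{\et}(X,\bZ/\ell^m\bZ(d))$. I insert this into the universal coefficient sequence
$$0\to \CH^d(X,q)/\ell^m\to \CH^d(X,q;\bZ/\ell^m\bZ)\to \CH^d(X,q-1)[\ell^m]\to 0$$
and pass to $\projlim{m}$, which is admissible because the middle terms are finite (Mittag-Leffler). The resulting sequence
$$0\to \CH^d(X,q)^{\wedge}_\ell\to H^{2d-q}_{\et}(X,\zl(d))\to T_\ell\,\CH^d(X,q-1)\to 0,$$
with $(-)^{\wedge}_\ell$ the $\ell$-adic completion and $T_\ell$ the Tate module, is the starting point of the analysis.

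Next I will invoke $\CH^d(X,q')_\bQ=0$ for $q'\ge 1$ (from \cite{So,Kah,Ge1,J2}; see below) to conclude that $\CH^d(X,q-1)$ is torsion with finite $\ell$-primary part---for $q=1$ this is replaced by the finite generation of $\CH_0(X)$ via higher-dimensional class field theory \cite{KS1}---so that $T_\ell\,\CH^d(X,q-1)=0$. The same vanishing applied at level $q$ gives $\CH^d(X,q)^{\wedge}_\ell=\CH^d(X,q)[\ell^\infty]$. The displayed sequence then collapses to $\CH^d(X,q)[\ell^\infty]\isom H^{2d-q}_{\et}(X,\zl(d))$, and summing over all primes $\ell$ produces the required isomorphism. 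Finiteness of the direct sum follows because each factor is finite by Deligne's Weil conjecture \cite{D} (since $2d-q<2d$ excludes Frobenius weight zero), while Kimura finite-dimensionality bounds the torsion of $\CH^d(X,q)$ uniformly in $\ell$, leaving only finitely many nonzero summands.

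The main obstacle will be isolating from \cite{Kah,Ge1,J2} both the vanishing $\CH^d(X,q)_\bQ=0$ and the uniform-in-$\ell$ boundedness of the torsion: these are what upgrade the mod-$n$ isomorphism of Theorem \ref{cor1intro} to the integral identification. Both ultimately rest on the Weil conjectures, which exclude Frobenius eigenvalue one on the Tate-twisted summands of the Chow motive produced by the Kimura--O'Sullivan decomposition.
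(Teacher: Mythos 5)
The skeleton of your argument is sound: pass to the inverse limit of the universal coefficient sequence, use Theorem \ref{cor1intro} to identify the middle term, and use the rational vanishing coming from Kimura--O'Sullivan finite-dimensionality. Two comments before the main issue: the vanishing $T_\ell\CH^d(X,q-1)=0$ is in fact automatic and does not need any input about $\CH^d(X,q-1)$ itself --- a Tate module is a torsion-free $\zl$-module, and you have exhibited it as a quotient of the finite group $H^{2d-q}_{\et}(X,\zl(d))$, so it must vanish. And the statement ``only finitely many $\ell$ contribute'' is not a consequence of Kimura finite-dimensionality; it follows from Gabber's theorem that $H^\ast(\Xb,\zl)$ is torsion-free for almost all $\ell$ together with the integrality of the characteristic polynomial of Frobenius, applied through the Hochschild--Serre sequence.

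The genuine gap is the step ``the same vanishing applied at level $q$ gives $\CH^d(X,q)^\wedge_\ell=\CH^d(X,q)[\ell^\infty]$''. Knowing that $A:=\CH^d(X,q)$ is torsion (from $A\otimes\bQ=0$) does \emph{not} give $A^\wedge_\ell=A[\ell^\infty]$: the completion kills the $\ell$-divisible part of $A[\ell^\infty]$, and a divisible torsion group such as $\qzl$ has trivial rationalization while $(\qzl)^\wedge_\ell=0\ne\qzl$. So what you must show is that $A[\ell^\infty]$ contains no nonzero $\ell$-divisible element, equivalently that it is finite, and rational vanishing alone does not yield this. The appeal to ``Kimura finite-dimensionality bounds the torsion of $\CH^d(X,q)$ uniformly in $\ell$'' is not a theorem of \cite{Ki}, \cite{OSu}, \cite{Kah}, \cite{Ge1} or \cite{J2}: finite-dimensionality is a statement about the rational Chow motive and controls $\bQ$-coefficient groups, not their torsion. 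The standard way to close this is to pass instead to the \emph{direct} limit of the universal coefficient sequence over $n=\ell^m$: Theorem \ref{cor1intro} gives $\CH^d(X,q';\qzl)\cong H^{2d-q'}_{\et}(X,\qzl(d))$ for $1\le q'\le t$, and the latter group is finite for $q'\ge 1$ by the Hochschild--Serre sequence and the Weil conjectures (the relevant weights are strictly negative, so $H^\ast(X,\ql(d))=0$ in the relevant degrees). The surjection $\CH^d(X,q';\qzl)\twoheadrightarrow\CH^d(X,q'-1)[\ell^\infty]$ then shows that $\CH^d(X,q'-1)[\ell^\infty]$ is finite for $q'-1\le t-1$; combined with your inverse-limit computation and the torsion statement $\CH^d(X,q)=\bigoplus_\ell\CH^d(X,q)[\ell^\infty]$ this gives the asserted isomorphism for $1\le q\le t-1$. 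The top degree $q=t$ is not covered by this argument and requires either strengthening the hypothesis so that \ref{cor1intro} applies one degree higher, or a separate argument for the absence of divisible torsion in $\CH^d(X,t)$.
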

\bigskip

Finally we explain briefly the strategy to prove Theorem \ref{mainthmintro2}.
The first key observation is that the conclusion of \ref{mainthmintro2}
implies the following fact: For $X$, projective smooth over $B=\Spec(k)$, and
for a simple normal crossing divisor $Y$ on $X$, the Kato homology
$\KH a U$ of $U=X-Y$ has a combinatoric description
as the homology of the complex
$$
(\Lambda)^{\pi_0(Y^{(d)})} \rightarrow (\Lambda)^{\pi_0(Y^{(d-1)})}
\rightarrow \ldots \rightarrow  (\Lambda)^{\pi_0(Y^{(1)})}
\rightarrow(\Lambda)^{\pi_0(X)}\,,
$$
where $\Lambda=H_0(B)$ and $\pi_0(Y^{(a)})$ is the set of
the connected components of the sum of
all $a$-fold intersections of the irreducible components
$Y_1,\dots,Y_N$ of $Y$.
Conversely the vanishing of the Kato homology of $X$ is deduced from such
a combinatoric description of $\KH a U$ for a suitable choice of
$U=X-Y$.
\medbreak

On the other hand, the conditions $(H1)$ through $(H3)$ of \ref{mainthmintro2}
imply that if one of the divisors $Y_1,\dots, Y_N$ on $X$ is very ample,
$H_a(U)$ for $a \leq d$ has the same combinatoric description.
Recalling that the spectral sequence \eqref{spectralsequence1}
$$
E^1_{a,b}(U)=\bigoplus_{x\in U_a}H_{a+b}(x)~~\Rightarrow ~~H_{a+b}(U)
$$
satisfies
$E^1_{a,b}(U) =0$ for $b<0$ and $E^2_{a,b}(U) = \KH a U$ for $b=0$,
the desired combinatoric description of $\KH a U$
is then deduced from the following vanishing:
\begin{equation}\label{vanishingZB}
\ZBU a b \infty b:=\ZU a b \infty / \BU a b {b} \;= 0
\qfor b\geq 1.
\end{equation}
Here
$$
E^1_{a,b}(U)=\ZU ab 0\supset\ZU ab r\supset \ZU ab \infty\supset
\BU ab \infty\supset  \BU ab r \supset \BU ab 0=0,
$$
is the standard notation for the spectral sequence so that
$$
\EU ab {r+1}=\ZU ab r/ \BU ab r,\quad
\ZU ab \infty=\underset{r\geq 0}{\cap}\ZU ab r,\quad
\BU ab \infty=\underset{r\geq 0}{\cup}\BU ab r.
$$
In order to show the vanishing, we pick up any element
$$\alpha\in \ZBU a b \infty b$$
and then take a hypersurface section of high degree
$Z\subset X$ containing the support $Supp(\alpha)$ of $\alpha$
so that $\alpha$ is killed under the restriction
$$
\ZBU a b \infty b \to \ZBUZ a b \infty b.
$$
The point is that the assumption $\RES {q}$ allows us to
make a very careful choice of $Z$, after desingularizing $Supp(\alpha)$,
so as to ensure by the induction on $\dim(U)$ the injectivity of
the restriction map, which implies $\alpha=0$.
The last step of the argument hinges on a general lemma proved in \S1
concerning the exactness of the following sequence:
$$
\ZBZcapU a b \infty b \to \ZBU a b \infty b \to \ZBUZ a b \infty b.
$$
Finally we note that taking $H = H^{\et}(-,\nz)[-1]$, the vanishing
\eqref{vanishingZB} may be viewed as an analog of the weak Lefschetz theorem
for cycle modules in the sense of Rost \cite{R}.
This will be explained explicitly for terms in lower degrees
in Corollary \ref{cormainthmIfinite} in \S5.

\bigskip
We note that the Kato conjecture for varieties over finite fields is studied
also by a different method in a paper \cite{J1} by the first author.
The method introduced in this paper was found by the second author
independently. It has been applied in \cite{SS} to study cycle class map
for 1-cycles on arithmetic schemes over the ring of integers in a local
field to provide new finiteness results.

It is not difficult to extend the method of this paper to study the Kato
conjecture and motivic cohomology of arithmetic schemes over the ring of
integers in a local field,
at least restricted to the prime-to-$p$ part, where $p$ is the residue
characteristic of the local field. In order to deal with the $p$-part and
the case of arithmetic schemes over the ring of integers in a number field,
one need develop a new input from $p$-adic Hodge theory.
This is a work in progress [JS3].
\bigskip

The authors thank Prof. T. Geisser
for helpful comments. The second author is grateful to the first author
for several opportunities to stay in the department of Mathematics at
University of Regensburg where he enjoyed warm hospitality.

\bigskip

\section{Fundamental Lemma}\label{FL}
\bigskip

Throughout this paper we fix a regular connected Noetherian base scheme $B$ and
work with a category $\cC$ of separated schemes of finite type over $B$ such
that for any object $X$ in $\cC$, every closed immersion $i:Y\hookrightarrow X$
and every open immersion $j:V\hookrightarrow X$ is (a morphism) in $\cC$.
For $X\in Ob(\cC)$ we define $\dim(X)$ to be the Krull dimension of any
compactification $\overline{X}$ of $X$ over $B$ (i.e., $\overline{X}$ is
proper over $B$ and there is an open immersion $X \hookrightarrow \overline{X}$
of $b$-schemes). This does not depend on the choice of compactification.
For an integer $a\geq 0$ let $X_a$ denotes the set of such $x\in X$ that
$\dim(\overline{\{x\}})=a$. Then one can check:
\begin{equation}\label{dimension}
X_a\cap Y =Y_a\quad\text{ for $Y$ locally closed in $X$}.
\end{equation}

Let $\Mod$ be the category of modules.

\begin{defn}
(a) Let $\cC_\ast$ be the category with the same objects as
$\cC$, but where morphisms are just the proper maps in
$\cC$. A homology theory $H=\{H_a\}_{a\in \bZ}$ on $\cC$ is
a sequence of covariant functors:
$$
H_a(-):~\cC_*\rightarrow \Mod
$$
satisfying the following conditions:
\begin{itemize}
\item [(i)] For each open immersion $j:V\hookrightarrow X$ in
$\cC$, there is a map $j^*:H_a(X)\rightarrow H_a(V)$,
associated to $j$ in a functorial way.
\item [(ii)] If $i:Y\hookrightarrow X$ is a closed immersion in $X$,
with open complement $j:V\hookrightarrow X$, there is a long exact sequence
(called localization sequence)
$$
\cdots\rmapo{\partial} H_a(Y) \rmapo{i_*} H_a(X) \rmapo{j^*} H_a(V) \rmapo{\partial} H_{a-1}(Y)
\longrightarrow \cdots.
$$
(The maps $\partial$ are called the connecting morphisms.) This
sequence is functorial with respect to proper maps or open
immersions, in an obvious way.
\end{itemize}

(b) A morphism between homology theories $H$ and $H'$ is a
morphism $\phi: H \rightarrow H'$ of functors on $\mathcal
C_\ast$, which is compatible with the long exact sequences from (ii).
\end{defn}
\medbreak

We give basic examples.

\begin{ex}\label{exH1}
Let $\cK$ be a bounded complex of \'etale sheaves of torsion modules on $B$.
Then one gets a homology theory $H = H^{\et}(-,\cK)$ on $\cC$ by defining
$$
H^\et_a(X,\cK):=H^{-a}(X_{\acute{e}t}, R\,f^{!}\cK),
\qfor f:X\rightarrow B \text{ in }\cC
$$
called the \'etale homology of $X$ over $B$ with values in $\cK$.
Here $Rf^!$ is the right adjoint of $Rf_!$ defined in [SGA 4], XVIII, 3.1.4.
\end{ex}
\bigskip

\begin{ex}\label{exH2}
Let $\cK$ be as in \ref{exH1}. One defines a homology theory
$H^D(-,\cK)$ on $\cC$ by:
$$
\HDK a X:=\Hom\big(H^{a}(B, R\,f_{!}f^*\cK),\qz\big),
\qfor f:X\rightarrow B \text{ in }\cC.
$$
\end{ex}
\bigskip

We fix a homology theory $H$ on $\cC$.
For every $X \in Ob(\cC)$, we have the spectral sequence of homological type,
called the niveau spectral sequence:
\begin{equation}\label{spectralsequence1}
E^1_{p,q}(X)=\bigoplus_{x\in X_p}H_{p+q}(x)~~\Rightarrow ~~H_{p+q}(X) \qwith
H_a(x)=\lim{V\subseteq \overline{\{x\}}} H_a(V).
\end{equation}
Here the limit is over all open non-empty subschemes
$V\subseteq \overline{\{x\}}$. This spectral sequence
is covariant with respect to proper morphisms in $\cC$ and
contravariant with respect to open immersions.
We briefly recall the construction of this spectral sequence
given by Bloch-Ogus \cite{BO}.
For $T\in \cC$ let $\cZ_p(T)$ be the set of closed subsets $Z~\subset~T$ of
dimension $\leq p$, ordered by inclusion, and let $\cZ_p/\cZ_{p-1}(T)$ be
the set of pairs $(Z,Z')\in \cZ_p\times \cZ_{p-1}$ with $Z'~\subset~Z$,
again ordered by inclusion. For every $(Z,Z') \in \cZ_p/\cZ_{p-1}(X)$,
one has the exact localization sequence
$$
\ldots \rightarrow H_a(Z') \rightarrow H_a(Z) \rightarrow
H_a(Z\setminus Z')
 \mathop{\rightarrow}\limits^{\partial} H_{a-1}(Z') \rightarrow \ldots,
$$
Taking its limit over $\cZ_p/\cZ_{p-1}(X)$, we get the exact sequence
\begin{equation}\label{exactsequence1}
\ldots H_a(\cZ_{p-1}(X)) \rightarrow H_a(\cZ_p(X)) \rightarrow
H_a(\cZ_p/\cZ_{p-1}(X)) \mathop{\rightarrow}\limits^{\delta}
H_{a-1}(\cZ_{p-1}(X)) \ldots.
\end{equation}
The collection of these sequences, together with the
fact that one has $H_\ast(\cZ_p(X)) = 0$ for $p<0$ and
$H_\ast(\cZ_p(X)) = H_\ast(X)$ for $p\geq \dim X$, gives the
spectral sequence in a standard way, e.g., by exact couples. Here
\begin{equation}\label{spectralsequence2}
E^1_{p,q}(X) = H_{p+q}(\cZ_p/\cZ_{p-1}(X)) = \bigoplus_{x\in X_p}
H_{p+q}(x).
\end{equation}

The $E^1$-differentials are the compositions
$$
H_{p+q}(\cZ_p/\cZ_{p-1}(X)) \rmapo{\delta}
H_{p+q-1}(\cZ_{p-1}(X)) \rightarrow
H_{p+q-1}(\cZ_{p-1}/\cZ_{p-2}(X)).
$$
The $E^r$-differentials are denoted by:
$$
\dX pq r \;:\; \EX pq r \to \EX {p-r}{q+r-1} r.
$$
We will use the standard notation:
\begin{equation}\label{terms}
E^1_{p,q}(X)=\ZX pq 0\supset\ZX pq r\supset \ZX pq \infty\supset
\BX pq \infty\supset  \BX pq r \supset \BX pq 0=0,
\end{equation}
where
$$
\EX pq {r+1}=\ZX pq r/ \BX pq r,\quad
\ZX pq \infty=\underset{r\geq 0}{\cap}\ZX pq r,\quad
\BX pq \infty=\underset{r\geq 0}{\cup}\BX pq r.
$$
We also denote
$$
\ZBX pq rs =\ZX pq r/ \BX pq s.
$$
\bigskip

In what follows we fix $X\in Ob(\cC)$,
a closed subscheme $i: Y \hookrightarrow X$ with
$j: V = X \setminus Y \hookrightarrow X$, the open complement.
The property \eqref{dimension} allows us to have the following maps of
the spectral sequences (cf. \cite{JS1}, Prop.2.9)
\begin{equation}\label{mapofss}
i_*: E^1_{p,q}(Y) \to E^1_{p,q}(X),\quad
j^*: E^1_{p,q}(X) \to E^1_{p,q}(V),\quad
\partial : E^2_{p,q}(V)^{(-)} \to E^2_{p-1,q}(Y),
\end{equation}
where the superscript $\null^{(-)}$ means that all differentials
in the original spectral sequence are multiplied by $-1$.
We have the short exact sequence
\begin{equation}\label{E1exactsequence}
0 \rightarrow E^1_{p,q}(Y) \rmapo{i_\ast} E^1_{p,q}(X)
\rmapo{j^\ast} E^1_{p,q}(V) \rightarrow 0
\end{equation}
and the long exact sequence
$$
\ldots\rightarrow E^2_{p,q}(Y) \rmapo{i_\ast} E^2_{p,q}(X)
\rmapo{j^\ast} E^2_{p,q}(V) \rmapo{\partial}
E^2_{p-1,q}(Y) \rightarrow \ldots.
$$
For $r\geq 3$ the sequence
$$
E^r_{p,q}(Y) \rmapo{i_\ast} E^r_{p,q}(X)
\rmapo{j^\ast} E^r_{p,q}(V)
$$
is not anymore exact in general. The following result will play a crucial role
in the proof of the main results of this paper.

\begin{thm}\label{FL}
Fix integers $p,q\geq 0$. Assume that there is an integer $e\geq 1-q$ such that
$$
\ZBV {p-a}{q+a}{\infty}{q+a+e} =0\qforall a\geq 1,
$$
$$
\ZBY {p-a}{q+a}{\infty}{q+a+e} =0 \qforall a \mbox{ with }  -(q+e-1) \leq  a\leq -1.
$$
Then the following sequence is exact:
$$
\ZBY pq \infty {q+e} \rmapo {i_*} \ZBX pq \infty {q+e}
\rmapo {j^*} \ZBV pq \infty {q+e}.
$$
\end{thm}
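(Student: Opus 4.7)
The containment $j^* \circ i_* = 0$ on $\ZBY pq \infty {q+e}$ follows directly from \eqref{E1exactsequence}: both $i_*$ and $j^*$, being morphisms of spectral sequences, preserve the filtrations $\{Z^r\}$ and $\{B^r\}$, so the zero composition descends to the quotients. The substance is exactness in the middle, for which the plan is a diagram chase on the niveau exact couple, using the hypotheses to absorb obstructions page by page.

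Given $\alpha \in Z^\infty_{p,q}(X)$ with $j^*(\alpha) \in B^{q+e}_{p,q}(V)$, write $j^*(\alpha) \equiv d^{q+e}_V(\beta) \pmod{B^{q+e-1}_{p,q}(V)}$ for a representative $\beta \in Z^{q+e-1}_{p+q+e, 1-e}(V)$ of a class in $E^{q+e}(V)$. The heart of the argument is to lift $\beta$ to $\tilde\beta \in Z^{q+e-1}_{p+q+e, 1-e}(X)$ with $j^*(\tilde\beta) = \beta$ in $E^1$, by induction on $r = 1, \ldots, q+e-1$ starting from any $E^1$-lift $\tilde\beta^{(0)}$ (available by the surjectivity in \eqref{E1exactsequence}). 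At step $r$, given $\tilde\beta^{(r-1)} \in Z^{r-1}(X)$ lifting $\beta$, the obstruction is the class $d^r_X[\tilde\beta^{(r-1)}] \in E^r_{p+q+e-r, r-e}(X)$; it vanishes under $j^*$ since $d^r_V[\beta] = 0$, so by the $E^1$-exactness (after adjusting the representative by an element of $B^{r-1}(X)$ whose $j^*$-image equals the $B^{r-1}(V)$-error) it arises as $i_*(\eta_r)$ for a class $[\eta_r] \in E^r_{p+q+e-r, r-e}(Y)$, permanent because its pushforward is a $d^r$-coboundary in $X$. Hypothesis $(b)$ at index $a = r - q - e$ gives $q+a+e = r$, yielding $Z^\infty_{p+q+e-r, r-e}(Y) = B^r_{p+q+e-r, r-e}(Y)$; hence $\eta_r \in B^r(Y)$, i.e., $\eta_r = d^r_Y(\mu_r)$ for some $\mu_r \in Z^{r-1}_{p+q+e, 1-e}(Y)$, and the correction $\tilde\beta^{(r)} := \tilde\beta^{(r-1)} - i_*(\mu_r)$ lies in $Z^r(X)$ while preserving $j^*(\tilde\beta^{(r)}) = \beta$ (as $j^*i_* = 0$).

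Setting $\tilde\beta := \tilde\beta^{(q+e-1)}$ and $\gamma := d^{q+e}_X(\tilde\beta) \in B^{q+e}_{p,q}(X)$, one has $j^*(\gamma) \equiv j^*(\alpha) \pmod{B^{q+e-1}(V)}$. Iterating the same style of lifting with $\beta$ successively replaced by lower-$B$-level representatives of the residual $j^*(\alpha - \gamma)$ in $B^{q+e-1}(V), B^{q+e-2}(V), \ldots, B^0(V) = 0$, I produce a refined $\gamma' \in B^{q+e}_{p,q}(X)$ with $j^*(\alpha - \gamma') = 0$ exactly in $E^1(V)$. The $E^1$-exactness in \eqref{E1exactsequence} then yields a unique $\zeta \in E^1_{p,q}(Y)$ with $i_*(\zeta) = \alpha - \gamma'$. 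The closing step is to show $\zeta \in Z^\infty_{p,q}(Y)$: any obstruction $d^r[\zeta]$ has $i_*$-image $d^r(\alpha - \gamma') = 0$ in $E^r(X)$, hence by the $E^2$-long exact sequence (and the derived-couple compatibility at higher pages) originates from a class on the $V$-side at a position $(p-a, q+a)$ with $a \geq 1$ on the $(p+q)$-antidiagonal, where hypothesis $(a)$ supplies exactly the vanishing $Z^\infty_{p-a, q+a}(V) = B^{q+a+e}_{p-a, q+a}(V)$ needed to kill it. This gives $\alpha \equiv i_*(\zeta) \pmod{B^{q+e}_{p,q}(X)}$ as desired.

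The principal obstacle is the bookkeeping at the non-exact pages $E^r$ for $r \geq 3$, where the sequence $E^r(Y) \to E^r(X) \to E^r(V)$ fails to be exact: every obstruction must be handled at the $E^1$ level with careful attention to the $Z^{r-1}$ versus $B^{r-1}$ ambiguity of representatives, and at each step one must verify that the chosen correction $\mu_r$ really lives in $Z^{r-1}(Y)$. The asymmetric shape of hypotheses $(a)$ and $(b)$, with the common shift $q+a+e$ tied to the position $(p+q+e, 1-e)$ of $\beta$, is precisely what makes the vanishing of $Z^\infty/B^{q+a+e}$ available at every step of both the main $Y$-side induction and the $V$-side endgame.
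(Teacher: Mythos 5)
Your outline captures the right \emph{flavor} of the argument --- an inductive lifting that peels off filtration levels, using the hypotheses to absorb obstructions from the $Y$- and $V$-sides --- but as written there are two genuine gaps, both stemming from the same issue: you work exclusively at the level of the pages $E^r$, while the obstruction to doing so is exactly what the theorem is fighting against (the failure of exactness of $E^r(Y)\to E^r(X)\to E^r(V)$ for $r\geq 3$, which the paper flags immediately before stating the theorem).

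First, in your lifting step at stage $r$, you have a representative $\delta_r\in Z^{r-1}(X)$ of $d^r_X[\tilde\beta^{(r-1)}]$ with $j^*(\delta_r)\in B^{r-1}(V)$, and you want to conclude that after ``adjusting the representative by an element of $B^{r-1}(X)$'' one gets something in the image of $Y$. But $j^*\colon B^{r-1}(X)\to B^{r-1}(V)$ is not surjective in general (only the $E^1$-sequence is exact), so you cannot simply adjust away the $B^{r-1}(V)$-error. This is precisely why the paper introduces, in Definition~\ref{Fpqr}, the subgroups $\Phi_r^{p,q}\subset H_{p+q}(\cZ_{p+r}(X))$ of the exact couple and carries out the whole induction there: these let one control the ``is-it-from-$Y$'' question at the level of the filtered groups $H_{p+q}(\cZ_\bullet(X))$, where the localization sequence genuinely is exact, instead of at the $E^r$-pages, where it is not. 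The same issue recurs in your second iteration over the descending $B$-filtration; moreover you never address why the cumulative corrections you introduce land inside $K^{q+e}_{p,q}(X)$ rather than spoiling the conclusion --- that is the content of Lemma~\ref{fl-lem3} in the paper, which you would need some substitute for.

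Second, the endgame is not salvageable as stated. Having found $\zeta\in E^1_{p,q}(Y)$ with $i_*(\zeta)=\alpha-\gamma'\in Z^\infty_{p,q}(X)$, you argue that $d^r_Y[\zeta]$ must vanish because its $i_*$-image in $E^r(X)$ vanishes and then invoke the ``$E^2$-long exact sequence and derived-couple compatibility at higher pages.'' But $i_*\colon E^r(Y)\to E^r(X)$ is not injective and the long exact sequence exists only at $E^2$, so the statement ``it originates from a class on the $V$-side'' has no content for $r\geq 3$; nor is it explained how the vanishing of $\ZBV {p-a}{q+a}{\infty}{q+a+e}$ would then force $d^r_Y[\zeta]=0$. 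The paper never attempts to prove ``$\zeta\in Z^\infty(Y)$'' directly. Instead, Theorem~\ref{FLa} is stated and proved at the level of $H_{p+q}(\cZ_p(X))$ (concluding $x\in \KX pq {q+e}+\Phi^{-1,p+q+1}_{p+1}$), and the passage to Theorem~\ref{FL} is then a short separate argument via the maps $f^{p,q}_r$ of Lemma~\ref{fl-lem1} and the surjectivity of $\pi_{p,q}(Y)$ onto $\ZY pq \infty$. A related sign that your scheme has drifted from what actually works: hypothesis (a) is used by the paper \emph{inside} the lifting (Claim~\ref{fl-claim1}, assumption $(*1)$), not only in an endgame, whereas you only invoke (b) in your main induction. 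You would need to rebuild the argument at the level of the filtered homology groups --- essentially reproducing the paper's $\Phi$-machinery --- before this sketch becomes a proof.
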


\bigskip

We need some preliminaries for the proof of the main theorem.
Recall that we have the exact sequence
\begin{equation}\label{les1}
\HXZ {p-1} \to \HXZ p \rmapo{\piX pq} \EX pq 1 \rmapo{\delX pq} \HXZa {p-1}
\end{equation}
For each integer $r\geq 0$ we put
\begin{equation}\label{Kpqr}
\KX pq r =\Ker\big( \HXZ p \to \HXZ {p+r}\big).
\end{equation}
Then we have
\begin{equation}\label{Bpqr}
\BX pq r=\piX pq(\KX pq r),
\end{equation}
\begin{equation}\label{Zpqinfty}
\ZX pq \infty= \Image(\piX pq)=\Ker(\delX pq),
\end{equation}
\begin{equation}\label{Zpqr}
\displaystyle{
\ZX pq r= {\delX pq}^{-1}(\frac{\HXZa {p-1-r}}{\KX {p-1-r}{q+r} r}),}
\end{equation}
and $\dX pq {r+1}:\EX pq {r+1}\to \EX {p-1-r}{q+r} {r+1}$ is induced by
$$ \displaystyle{
\ZX pq r \rmapo{\delX pq} \frac{\HXZa {p-1-r}}{\KX {p-1-r}{q+r} r}
\rmapo{\piX {p-1-r}{q+r}} \ZBX {p-1-r}{q+r} 0 r.
}$$
\bigskip

We now introduce an object that plays a key role in the proof of \ref{FL}.
\begin{defn}\label{Fpqr}
We set
$$
\displaystyle{
\FF pq r =\Ker\big(\HXZ {p+r} \rmapo{j^*}
\frac{\HVZ {p+r}}{\Image(\HVZ p)}\big).}
$$
\end{defn}

Note
\begin{equation*}\label{note1}
\Image\big(\HYZ {p+r} @>{i_*}>> \HXZ {p+r}\big) \subset \FF pq r.
\end{equation*}
\begin{equation*}\label{note2}
\Image\big(\HXZ {p} @>>> \HXZ {p+r}\big) \subset \FF pq r.
\end{equation*}
By definition there is a natural map
\begin{equation}\label{gFpqr}
\displaystyle{
\gF pq r \;:\; \FF pq r \rmapo{j^*}
\frac{\HVZ p}{\KV p q r} \rmapo{\piV p q} \ZBV p q \infty r}.
\end{equation}
Noting
$\Ker(\piV p q : \HVZ p \to \EV pq 1)=\Image(\HVZ {p-1})$, we have
\begin{equation}\label{gFpqrler}
\Ker(\gF pq r) =\FF {p-1}{q+1}{r+1}.
\end{equation}
There is a natural map
\begin{equation}\label{psiFpqr}
\psiF pq r\;:\; \ZV {p+r+1}{q-r} r \to \FF pq r/i_*\KY {p+r}{q-r} 1,
\end{equation}
where $i_*: \HYZ {p+r} \to \HXZ {p+r}$.
Indeed consider the composite map
$$
\displaystyle{
\ZV {p+r+1}{q-r} r \hookrightarrow \EV {p+r+1}{q-r} 1
\underset{(j^*)^{-1}}{\isom} \frac{\EX {p+r+1}{q-r} 1}{\EY {p+r+1}{q-r} 1}
\rmapo{\delX {p+r+1}{q-r}} \frac{\HXZ {p+r}}{i_*\KY {p+r}{q-r} 1}
}$$
where we note
$\Image(\EY {p+r+1}{q-r} 1 \to \HYZ {p+r})=\KY {p+r}{q-r} 1$.
By \eqref{Zpqr} its image lies in $\FF pq r/i_*\KY {p+r}{q-r} 1$ and
the following sequence is exact:
\begin{equation}\label{psicomplex}
\ZV {p+r+1}{q-r} r \rmapo{\psiF pq r} \FF pq r/i_*\KY {p+r}{q-r} 1
\rmapo{\tau} \FF pq {r+1},
\end{equation}
where $\tau$ is induced by the natural map
$\HXZ {p+r} \to \HXZ {p+r+1}$.
The following diagram is commutative and all sequences are exact:
\begin{equation}\label{gFpqrcomm}
\begin{CD}
 \ZV {p+r+1}{q-r}{r+1}  @>{\hookrightarrow}>>  \ZV {p+r+1}{q-r}{r}
@>{\dV {p+r+1}{q-r}{r+1}}>> \BBV pq {r+1}r \\
 @VV{\psiF {p-1}{q+1}{r+1}}V @VV{\psiF {p}{q}{r}}V @VVV \\
 \FF {p-1}{q+1}{r+1}/i_*\KY {p+r}{q-r} 1 @>{\hookrightarrow}>>
\FF pq r/i_*\KY {p+r}{q-r} 1 @>{\gF pq r}>> \ZBV pq \infty r \\
 @VV{\tau}V @VV{\tau}V @VVV \\
 \FF {p-1}{q+1}{r+2} @>{\hookrightarrow}>> \FF pq {r+1}
@>{\gF pq {r+1}}>> \ZBV pq \infty {r+1} \\
\end{CD}
\end{equation}

\begin{lem}\label{fl-lem1}
Assume $r\geq 1$.
\begin{itemize}
\item[(1)]
There exists a unique map
$$ \fF pq r \;:\; \FF pq r \to \ZY {p+r}{q-r} r$$
whose composition with
$\ZY {p+r}{q-r} r\hookrightarrow \EY {p+r}{q-r} 1 @>{i_*}>> \EX {p+r}{q-r} 1$
is
$$\FF pq r \hookrightarrow \HXZ {p+r} \rmapo{\piX {p+r}{q-r}}
 \EX {p+r}{q-r} 1.$$
\item[(2)]
The composite map
$$
\HYZ {p+r} @>{i_*}>> \FF pq r @>{\fF pq r }>> \ZY {p+r}{q-r} r
\hookrightarrow \EY {p+r}{q-r} 1
$$
is the natural map
$\HYZ {p+r} \to \HYZZ {p+r}.$
\item[(3)]
The following diagram is commutative up to $\pm 1$.
$$
\begin{CD}
\ZBV pq \infty r @>{\partial}>> \ZBY {p-1}q \infty r\\
@AA{\gF pq r}A                 @AA{\dY {p+r}{q-r}{r+1}}A\\
\FF pq r @>{\fF pq r}>> \ZY {p+r}{q-r} r \\
\end{CD}
$$
\item[(4)]
\eqref{psicomplex} (with $r$ replaced by $r$-$1$) extends to the following exact sequence:
$$
\displaystyle{
\ZV {p+r}{q-r+1} {r-1} \rmapo{\psiF pq {r-1}}
\frac{\FF pq {r-1}}{i_*\KY {p+r-1}{q-r+1} 1} \rmapo{\tau} \FF pq {r}
\rmapo{\fF pq {r}} \ZY {p+r}{q-r} {r}.
}$$
\end{itemize}
\end{lem}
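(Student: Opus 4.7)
All four parts are diagram chases using the $E^1$-short exact sequence \eqref{E1exactsequence}, the description \eqref{Bpqr}--\eqref{Zpqr} of the spectral sequence terms, and the filtered localization sequences at each level $\cZ_s$. I treat them in order; part (4) is the main obstacle.

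\textbf{Part (1).} For $\alpha\in\FF pq r$ I first show that $\piX{p+r}{q-r}(\alpha)\in\EX{p+r}{q-r} 1$ lies in $\Image(i_\ast)$: $j^\ast\piX{p+r}{q-r}(\alpha)=\piV{p+r}{q-r}(j^\ast\alpha)$, and since $j^\ast\alpha\in\Image(\HVZ p\to\HVZ{p+r})$ factors through $\HVZ{p+r-1}$, its projection to $\EV{p+r}{q-r} 1=\HVZZ{p+r}$ vanishes. By \eqref{E1exactsequence} there is a unique $\fF pq r(\alpha)\in\EY{p+r}{q-r} 1$ with $i_\ast\fF pq r(\alpha)=\piX{p+r}{q-r}(\alpha)$, which takes care of existence and uniqueness. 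To verify $\fF pq r(\alpha)\in\ZY{p+r}{q-r} r$ one chases $\delY{p+r}{q-r}\fF pq r(\alpha)$ through the localization sequence for $(Y,X,V)$: from $\piX{p+r}{q-r}(\alpha)\in\ZX{p+r}{q-r}\infty$ (by \eqref{Zpqinfty}) and $\delX\circ i_\ast=i_\ast\circ\delY$ one has $\delY{p+r}{q-r}\fF pq r(\alpha)\in\Ker(i_\ast)=\Image(\partial_V)$, and a filtration-$p$ lift $\xi\in\HVZ p$ of $j^\ast\alpha$ realizes $\delY{p+r}{q-r}\fF pq r(\alpha)$ as $\partial_V(\iota(\xi))$, whose filtration level matches the condition \eqref{Zpqr} for $\ZY{p+r}{q-r} r$.

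\textbf{Parts (2) and (3).} For (2), naturality gives $\piX{p+r}{q-r}\circ i_\ast=i_\ast\circ\piY{p+r}{q-r}$ and $j^\ast i_\ast=0$, so $i_\ast\HYZ{p+r}\subset\FF pq r$; the uniqueness in (1) forces $\fF pq r\circ i_\ast=\piY{p+r}{q-r}$, whose image lies in $\ZY{p+r}{q-r}\infty\subset\ZY{p+r}{q-r} r$. For (3) I fix $\alpha\in\FF pq r$ and a lift $\xi\in\HVZ p$ of $j^\ast\alpha$, and compute both sides of the claimed commuting square in terms of $\xi$: $\gF pq r(\alpha)$ is represented by $\piV pq(\xi)$, so $\partial\gF pq r(\alpha)$ unfolds to the connecting morphism of $(Y,X,V)$ applied to $\xi$; by the chase in (1) the same $\xi$ governs $\delY{p+r}{q-r}\fF pq r(\alpha)$, so $\dY{p+r}{q-r}{r+1}\fF pq r(\alpha)$ reduces to the same boundary expression. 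Signs are matched by the twist $\null^{(-)}$ from \eqref{mapofss}.

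\textbf{Part (4).} Exactness at $\FF pq{r-1}/i_\ast\KY{p+r-1}{q-r+1} 1$ is \eqref{psicomplex} with $r$ shifted by one, so only exactness at $\FF pq r$, namely $\Ker(\fF pq r)=\Image(\tau)$, is new. The inclusion $\Image(\tau)\subseteq\Ker(\fF pq r)$ is immediate: $\tau(\beta)$ is the image of an element of $\HXZ{p+r-1}$, so its $\piX{p+r}{q-r}$ vanishes by \eqref{les1} and therefore $\fF pq r\tau=0$. For the reverse, let $\alpha\in\FF pq r$ with $\fF pq r(\alpha)=0$; then $\piX{p+r}{q-r}(\alpha)=0$ and \eqref{les1} yields $\gamma\in\HXZ{p+r-1}$ mapping to $\alpha$. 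Writing $j^\ast\alpha=\iota(\xi)$ for $\xi\in\HVZ p$ and letting $\xi_{r-1}\in\HVZ{p+r-1}$ denote its image, the obstruction $\eta:=j^\ast\gamma-\xi_{r-1}$ lies in $\KV{p+r-1}{q-r+1} 1$. Producing $\beta\in\FF pq{r-1}$ with $\tau(\beta)=\alpha$ then reduces to finding $\kappa\in\KX{p+r-1}{q-r+1} 1$ with $j^\ast\kappa=\eta$ modulo $i_\ast\KY{p+r-1}{q-r+1} 1$-ambiguity; setting $\beta:=\gamma-\kappa$ does the job. I will establish this lifting statement by chasing the commutative diagram formed by the localization sequences at $\cZ_{p+r-1}$ and $\cZ_{p+r}$: a preliminary lift of $\eta$ to $\HXZ{p+r-1}$ exists modulo $i_\ast\HYZ{p+r-1}$, and the further correction needed to land in $\KX{p+r-1}{q-r+1} 1$ is controlled by the vanishing of $\iota_1(\eta)$ in $\HVZ{p+r}$ and absorbed precisely by the $i_\ast\KY{p+r-1}{q-r+1} 1$-ambiguity.
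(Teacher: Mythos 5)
Your overall plan follows the paper's route for all four parts, but two steps that carry the real weight of the lemma are not actually carried out and one of them is incorrect as stated.

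In part (1), the hard content is the commutativity (up to sign) of the square you invoke at the end, relating $\delY{p+r}{q-r}\circ\fF pq r$ to the connecting morphism $\partial$ of the triple $(Y,X,V)$; in the paper this is diagram \eqref{gfcomm}. You assert it by saying the filtration level of $\partial_V(\iota(\xi))$ matches \eqref{Zpqr}, but this is exactly what needs proof. The paper spends the bulk of the argument here: because $\partial$ for the niveau spectral sequence is built from limits over $\cZ_p/\cZ_{p-1}$, one has to decompose $S=\ol W\cup Z$ with $T=\ol W\cap Z$ for $W\in\cZ_s(V)$, $Z\in\cZ_s(Y)$, use $S\setminus T=(\ol W\setminus T)\coprod(Z\setminus T)$, and verify that $\partial_{(\ol W,T)}\cdot j^*+\partial_{(Z,T)}\cdot p_Z\cdot\nu=0$. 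Without that diagram chase, neither the containment $\fF pq r(\alpha)\in\ZY{p+r}{q-r}r$ nor part (3) is established.

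In part (4), you correctly reduce exactness at $\FF pq r$ to a lifting problem, namely finding $\kappa\in\KX{p+r-1}{q-r+1}1$ with $j^*\kappa=\eta$. But the mechanism you propose for producing $\kappa$ does not match the actual structure of the niveau localization: you claim a preliminary lift of $\eta\in\HVZ{p+r-1}$ to $\HXZ{p+r-1}$ exists modulo $i_*\HYZ{p+r-1}$, but the connecting morphism for the triple $(Y,X,V)$ at the $\cZ$-level shifts degree, sending $H_a(\cZ_s(V))$ to $H_{a-1}(\cZ_{s-1}(Y))$; the obstruction to lifting from $\HVZ{p+r-1}$ lands in $H_{*-1}(\cZ_{p+r-2}(Y))$, not in $i_*\HYZ{p+r-1}$. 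The clean and correct argument (the paper's) is to observe that $\KX{p+r-1}{q-r+1}1$ and $\KV{p+r-1}{q-r+1}1$ are precisely the images of $\delX{p+r}{q-r+1}$ and $\delV{p+r}{q-r+1}$ on the respective $E^1$-terms, and that $j^*\colon\EX{p+r}{q-r+1}1\to\EV{p+r}{q-r+1}1$ is surjective by \eqref{E1exactsequence}; surjectivity of $j^*$ on $K$-terms follows immediately. You should replace your lifting chase by this observation.

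Parts (2) and (3) are fine as consequences of (1) and of \eqref{gfcomm}, once the latter is actually established.
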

\bigskip

The proof of \ref{fl-lem1} will be given later in this section.

\begin{thm}\label{FLa}
Let the assumption be as in \ref{FL}.
Let $x\in \HXZ p$ and assume $j^*(\piX pq(x))\in \BV pq {q+e}$.
Then we have
$x\in \KX pq {q+e} + \FF {-1}{p+q+1} {p+1}.$
\end{thm}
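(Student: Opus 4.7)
The plan is to find $y \in \FF{-1}{p+q+1}{p+1} = \Ker\bigl(j^*\colon H_{p+q}(\cZ_p(X)) \to H_{p+q}(\cZ_p(V))\bigr)$ such that $\tau^{q+e}(y) = \tau^{q+e}(x)$, where $\tau^{q+e}\colon H_{p+q}(\cZ_p(X)) \to H_{p+q}(\cZ_{p+q+e}(X))$ is the iterated natural map whose kernel is precisely $K^{q+e}_{p,q}(X)$; then $x - y \in K^{q+e}_{p,q}(X)$ gives the desired decomposition. Set $x_0 := \tau^{q+e}(x)$.

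\emph{Promotion phase.} By the definition of $\gF{p}{q}{r}$ in \eqref{gFpqr}, the hypothesis $\pi_{p,q}(j^*x) \in B^{q+e}_{p,q}(V)$ translates to $\gF{p}{q}{q+e}(x_0) = 0$, hence by \eqref{gFpqrler}, $x_0 \in \FF{p-1}{q+1}{q+e+1}$. For each $a = 1, \dots, p$, the $V$-assumption makes the target $\ZBV{p-a}{q+a}{\infty}{q+a+e}$ of $\gF{p-a}{q+a}{q+a+e}$ vanish, forcing $\gF(x_0) = 0$ and $x_0 \in \FF{p-a-1}{q+a+1}{q+a+e+1}$. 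After $p$ iterations, $x_0 \in \FF{-1}{p+q+1}{p+q+e+1}$.

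\emph{Descent phase.} I then construct inductively, for $s = 0, 1, \dots, q+e$, elements $y_s \in \FF{-1}{p+q+1}{p+q+e+1-s}$ with $\tau^s(y_s) = x_0$, starting from $y_0 := x_0$. The inductive step uses the exact sequence of Lemma \ref{fl-lem1}(4) at $(-1, p+q+1, p+q+e+1-s)$, which yields $y_{s+1} \in \FF{-1}{p+q+1}{p+q+e-s}$ with $y_s = \tau(y_{s+1})$ provided $\fF(y_s) = 0$ in $\ZY{p+q+e-s}{s-e}{p+q+e+1-s}$. For $s = 0$: since $y_0$ is the image of $x \in H_{p+q}(\cZ_p(X))$ and $p \leq p+q+e-1$ (as $e \geq 1-q$), $y_0$ is represented in $H_{p+q}(\cZ_{p+q+e-1}(X))$; hence $\pi_{p+q+e,-e}(y_0) = 0$, and Lemma \ref{fl-lem1}(1) combined with the injectivity of $i_*$ on $E^1$ from \eqref{E1exactsequence} forces $\fF(y_0) = 0$. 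For $s \geq 1$: every differential of rank $> p+q+e+1-s$ from position $(p+q+e-s, s-e)$ on $Y$ has target at negative first coordinate (where $E^1 = 0$), so $Z^{p+q+e+1-s} = Z^\infty$ at this position and $\fF(y_s) \in Z^\infty_{p+q+e-s, s-e}(Y)$; the $Y$-assumption (applied at $a = s-q-e \in [-(q+e-1), -1]$) gives $Z^\infty = B^s$ here, so $\fF(y_s) = \pi(\kappa)$ for some $\kappa \in K^s_{p+q+e-s, s-e}(Y)$, and replacing $y_s$ by $y_s - i_*\kappa$ (which still lies in $\FF{-1}{p+q+1}{p+q+e+1-s}$, since $j^*i_* = 0$) zeroes $\fF$ by Lemma \ref{fl-lem1}(2) without changing $\tau^s(y_s) = x_0$ (because $\tau^s(\kappa) = 0$ for $\kappa \in K^s$).

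At $s = q+e$ the element $y := y_{q+e} \in \FF{-1}{p+q+1}{p+1}$ satisfies $\tau^{q+e}(y) = x_0 = \tau^{q+e}(x)$, giving $x - y \in K^{q+e}_{p,q}(X)$ and hence $x \in K^{q+e}_{p,q}(X) + \FF{-1}{p+q+1}{p+1}$. The main obstacle is securing $\fF(y_s) = 0$ at every descent step: for $s \geq 1$ this is handled uniformly by the $Y$-hypothesis via the $Z^\infty = B^s$ relation, but the initial case $s = 0$ falls outside the range of the $Y$-hypothesis and instead requires exploiting that $x_0$ originates from the lower stratum $\cZ_p \hookrightarrow \cZ_{p+q+e-1}$.
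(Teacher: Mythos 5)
Your proof is correct and takes a genuinely different, and in places cleaner, route than the paper's. The paper works at filtration level $p+q+e-1$ by tracking the images $z^{(t)}$ (for $t\le q+e-1$) inside the quotient groups $\tFF pq t=\FF pq t/i_*\KY{p+t}{q-t}{q-t+e}$; this forces it to introduce the $\psi$-maps $\psiF pq r$ to correct for the fact that $x^{(q+e-1)}$ need not lie in $\FF pq {q+e-1}$, and to invoke Lemma \ref{fl-lem2} (surjectivity of the $\tFF$-transition maps, which packages the $Y$-hypothesis) and Lemma \ref{fl-lem3} (intersection of $\Image(\tpsiF)$ with $\Image(\iota)$) to pin the resulting corrector inside $\KX pq {q+e}$. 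You instead work one level higher, at $\HXZ{p+q+e}$, where $x_0=\tau^{q+e}(x)$ is \emph{automatically} in $\FF pq {q+e}$ and where the hypothesis reads as $\gF pq {q+e}(x_0)=0$; the whole ``promotion'' to $\FF{-1}{p+q+1}{p+q+e+1}$ then becomes a bare iteration of the kernel identity \eqref{gFpqrler}, driven solely by the $V$-hypothesis and never touching $\psi$, $\tFF$, or Lemma \ref{fl-lem3}. Your descent reproves what Lemma \ref{fl-lem2} encodes, but directly from the exact sequence in Lemma \ref{fl-lem1}(4), handling the $K$-correction inline by subtracting $i_*\kappa$ (justified by Lemma \ref{fl-lem1}(2) and $\tau^s(\kappa)=0$) rather than by quotienting. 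The $s=0$ case, which falls outside the $Y$-hypothesis range, is handled by the observation that $x_0$ is represented at filtration level $p\le p+q+e-1$, so $\piX{p+q+e}{-e}(x_0)=0$ and Lemma \ref{fl-lem1}(1) plus injectivity of $i_*$ on $E^1$ give $\fF(x_0)=0$; this is a point the paper's set-up renders invisible but which you must --- and do --- address explicitly. I checked the index bookkeeping (the $V$-hypothesis is consumed for $a=1,\ldots,p$; the $Y$-hypothesis at $a=s-q-e$ for $s=1,\ldots,q+e-1$ exactly fills $[-(q+e-1),-1]$; $\ZY{p+q+e-s}{s-e}{p+q+e+1-s}=\ZY{p+q+e-s}{s-e}\infty$ because the filtration index is at most $0$ after one more differential; and $r'=p+q+e+1-s\ge 1$ throughout so Lemma \ref{fl-lem1}(4) applies), and it all closes up. What your approach buys is a shorter dependency chain; what the paper's buys is that its Claims \ref{fl-claim1}/\ref{fl-claim2} are phrased in the $\tFF$-language that the authors reuse elsewhere.
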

\medbreak

We first deduce \ref{FL} from \ref{FLa}.
By \eqref{Bpqr} it suffices to show that the conclusion of \ref{FLa} implies
$$x\in \KX pq {q+e} + \Image(\HXZ {p-1}) +
\Image(\HYZ p \rmapo{i_*} \HXZ p).$$
This follows from the commutative diagram
$$
\begin{CD}
\HYZ p @>{\piY pq}>> \ZBY pq \infty {q+e}\\
@VV{i_*}V @|\\
\FF {-1}{p+q+1}{p+1} @>{\fF {-1}{p+q+1}{p+1}}>> \ZBY pq {p+1}{q+e}\\
\end{CD}
$$
together with the fact that
$\piY pq$ is surjective by \eqref{Zpqinfty} and that
$\Ker(\fF {-1}{p+q+1}{p+1})$ lies in the image of
$\FF {-1}{p+q+1}{p}\subset \HXZ {p-1}$
due to \ref{fl-lem1}(4) and \ref{fl-lem1}(4)(2) and \eqref{Bpqr}.
$\square$
\bigskip

Let $e$ be the integer in \ref{FL} and set
$$
\tFF pq r =\FF pq r/i_*\KY {p+r}{q-r} {q-r+e}.
$$
For any $z\in \HXZ p$ and any integer $0\leq t\leq q+e-1$, let
$$z^{(t)}\in \tFF pq t\subset \HXZ {p+t}/i_*\KY {p+t}{q-t}{q-t+e}$$
be the image of $z$ under the map induced by $\HXZ {p} \rightarrow \HXZ {p+t}$ (note $q-t+e\geq 1$).
By induction \ref{FLa} is deduced from the following claims.

\begin{claim}\label{fl-claim1}
Let $x\in \HXZ p$ and assume $j^*(\piX pq (x))\in \BV pq {q+e}$.
Assume:
$$\ZBV {p-t}{q+t}\infty {q+t+e}=0 \qforall 1\leq t\leq p.
\leqno(*1)$$
$$\ZBY {p+t}{q-t}\infty {q-t+e}=0 \qforall 1\leq t\leq q+e-1.
\leqno(*2)$$
Then there exists $u\in \KX pq {q+e}$ such that
$(x-u)^{(q+e-1)}\in \tFF {-1}{p+q+1}{p+q+e}\subset \tFF {p}{q}{q+e-1}$.
\end{claim}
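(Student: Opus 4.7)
\medbreak\noindent\textbf{Proof plan.} My approach will be a finite induction on $n = 0, 1, \ldots, p+1$ of the statement IH($n$): \emph{there exists $u_n \in \KX{p}{q}{q+e}$ with $(x - u_n)^{(q+e-1)} \in \tFF{p-n}{q+n}{q+e-1+n} \subset \tFF{p}{q}{q+e-1}$}. The inclusions will follow by iterating \eqref{gFpqrler}; a short index calculation shows that each $\tFF{p-n}{q+n}{q+e-1+n}$ is the quotient of $\FF{p-n}{q+n}{q+e-1+n}$ by the common subgroup $i_*\KY{p+q+e-1}{1-e}{1}$, so these inclusions are literal. The case $n = p+1$ yields the target subgroup $\tFF{-1}{p+q+1}{p+q+e}$, and the base case $u_0 = 0$ is trivial since $\tFF{p}{q}{q+e-1}$ is the ambient quotient.

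For the inductive step I would form
$$
\alpha_n := \gF{p-n}{q+n}{q+e-1+n}\bigl((x - u_n)^{(q+e-1)}\bigr) \in \ZBV{p-n}{q+n}{\infty}{q+e-1+n};
$$
by \eqref{gFpqrler}, vanishing of $\alpha_n$ places $(x - u_n)^{(q+e-1)}$ into $\tFF{p-n-1}{q+n+1}{q+e+n}$, giving IH($n+1$). For $n = 0$ the hypothesis $j^*(\pi^X_{p,q}(x)) \in \BV{p}{q}{q+e}$ forces $\alpha_0 \in \BBV{p}{q}{q+e}{q+e-1}$; for $1 \le n \le p$, condition $(*1)$ with $t = n$ yields $\ZBV{p-n}{q+n}{\infty}{q+e+n} = 0$, hence $Z^\infty = B^{q+e+n}$ on $V$ and $\alpha_n$ lies automatically in $\BBV{p-n}{q+n}{q+e+n}{q+e-1+n}$. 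Using the surjectivity of the differential along the top row of \eqref{gFpqrcomm} I would pick $z_n \in \ZV{p+q+e}{1-e-n}{q+e-1+n}$ mapping onto $\alpha_n$, and set $w_n := \psiF{p-n}{q+n}{q+e-1+n}(z_n) \in \tFF{p-n}{q+n}{q+e-1+n}$. The commutativity of \eqref{gFpqrcomm} then gives $\gF{p-n}{q+n}{q+e-1+n}(w_n) = \alpha_n$, so that $(x - u_n)^{(q+e-1)} - w_n \in \tFF{p-n-1}{q+n+1}{q+e+n}$.

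The main obstacle will be the closing step: realising $w_n$ as $(v_n)^{(q+e-1)}$ for some $v_n \in \KX{p}{q}{q+e}$, so that $u_{n+1} := u_n + v_n$ closes the induction. By construction $w_n$ is the image under the connecting morphism $\delta^X$ of a class pulled back from $V$, which a priori dies only in $\HXZ{p+q+e+n}$; by contrast, membership in $\KX{p}{q}{q+e}$ requires a preimage in $\HXZ{p}$ that dies already in $\HXZ{p+q+e}$. Producing such a $v_n$ will amount to lifting $z_n$ stepwise to a cycle in $\ZX{p+q+e}{1-e}{q+e}$ arising from $H_{p+q+1}(\cZ_{p+q+e}/\cZ_p(X))$, via a diagram chase along the columns of \eqref{gFpqrcomm} at several consecutive indices and through the four-term exact sequence of \ref{fl-lem1}(4). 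The chase will absorb the $Y$-side obstructions into the $i_*K^1$ quotient using condition $(*2)$ together with \ref{fl-lem1}(2), and kill residual $V$-side contributions using condition $(*1)$. This chase is the technical heart of the argument.
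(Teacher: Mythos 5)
The skeleton of your induction --- pushing $(x-u)^{(q+e-1)}$ through the decreasing filtration $\tFF{p-n}{q+n}{q+e-1+n}$ using $(*1)$ and the diagram \eqref{gFpqrcomm}, until it reaches $\tFF{-1}{p+q+1}{p+q+e}$ --- is exactly the paper's strategy. (A small bookkeeping slip: the group you want is $\ZV{p+q+e}{1-e}{q+e-1+n}$, not $\ZV{p+q+e}{1-e-n}{q+e-1+n}$; putting $p\mapsto p-n$, $q\mapsto q+n$, $r\mapsto q+e-1+n$ in \eqref{gFpqrcomm} makes the second index $q-r = 1-e$, independent of $n$.)

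The genuine gap is in your inductive hypothesis IH($n$), which demands at every stage a correction $u_n \in \KX pq {q+e}$ with $(x-u_n)^{(q+e-1)}$ landing in $\tFF{p-n}{q+n}{q+e-1+n}$. This is strictly stronger than what can be maintained: for it to hold you would need to realize each $w_n = \psiF{p-n}{q+n}{q+e-1+n}(z_n)$ as $(v_n)^{(q+e-1)}$ for some $v_n\in \KX pq{q+e}$, i.e.\ you need $w_n\in\Image(\iota)$ where $\iota:\HXZ p\to\tFF pq{q+e-1}$. But $w_n$, which by \ref{fl-lem1}(4) is just an element of $\Ker(\tau)$ built out of a cycle on $V$, has no reason to lie in $\Image(\iota)$; the tool the paper has for transporting classes back to $\Image(\iota)$ is Lemma \ref{fl-lem2}, and that lemma (driven by $(*2)$) only gives surjectivity of $\tFF{-1}{p+q+1}{p+t}\to\tFF{-1}{p+q+1}{p+t+1}$ for $1\le t\le q+e-1$, i.e.\ only between stages of the \emph{bottom} filtration $\tFF{-1}{p+q+1}{\cdot}$ (where the first index has already dropped to $-1$). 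It says nothing about the intermediate groups $\tFF{p-n}{q+n}{q+e-1+n}$ in which your $w_n$ live, so the chase you sketch at the end (``absorb the $Y$-side obstructions\dots kill residual $V$-side contributions\dots'') cannot be run at each step.

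The paper's proof avoids this by deferring the lift to the very end. It never produces intermediate $u_n\in\KX pq{q+e}$; it keeps a cumulative cycle $z_s\in\ZV{p+q+e}{1-e}{q+e-1}$ on $V$ and tracks $\epsilon_s := x^{(q+e-1)}-\psiF pq{q+e-1}(z_s)\in\tFF{p-s}{q+s}{q+e+s-1}$, using $(*1)$ for the descent exactly as you do. Only after $\epsilon := \epsilon_{p+1}$ lands in $\tFF{-1}{p+q+1}{p+q+e}$ does it invoke $(*2)$: Lemma \ref{fl-lem2} gives that $\tFF{-1}{p+q+1}{p+1}\twoheadrightarrow\tFF{-1}{p+q+1}{p+q+e}$, and since $\tFF{-1}{p+q+1}{p+1}\subset\HXZ p/i_*\KY pq{q+e}$ and $x^{(q+e-1)}$ also comes from $\HXZ p$, this forces $\psiF pq{q+e-1}(z)=x^{(q+e-1)}-\epsilon\in\Image(\iota)$. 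Then Lemma \ref{fl-lem3} upgrades $\Image(\tpsiF pq{q+e-1})\cap\Image(\iota)\subset\iota(\KX pq{q+e})$, producing a single $u\in\KX pq{q+e}$ with $(x-u)^{(q+e-1)}=\epsilon$. If you weaken IH($n$) to track a cumulative $z_n$ on $V$ rather than a lifted $u_n$ on $X$, and place your closing lift after the induction finishes, your argument becomes the paper's.
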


\begin{claim}\label{fl-claim2}
Fix an integer $0\leq r\leq q+e-2$.
Let $x\in \HXZ p$ and assume
$x^{(r+1)}\in \tFF {-1}{p+q+1} {p+r+2}\subset \tFF pq {r+1}$.
Assume
$$\ZBY {p+t}{q-t}\infty {q-t+e}=0 \qforall 1\leq t\leq r.
\leqno(*3)$$
Then there exists $u\in \KX pq {q+e}$ such that
$(x-u)^{(r)} \in \tFF {-1}{p+q+1} {p+r+1}\subset \tFF pq r$.
\end{claim}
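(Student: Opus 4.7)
The strategy is to combine the exactness in Lemma \ref{fl-lem1}(4) at two different sets of indices with the vanishing hypothesis $(*3)$, in order to lift $x^{(r+1)}$ one level back to $r$ inside $\FF{-1}{p+q+1}{p+r+1}$ (modulo the appropriate $\KY$) after a suitable correction $u\in \KX pq{q+e}$.

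First, I argue that the obstruction $\fF{-1}{p+q+1}{p+r+2}(x^{(r+1)})\in \ZY{p+r+1}{q-r-1}{p+r+2}$ to lifting $x^{(r+1)}\in \FF{-1}{p+q+1}{p+r+2}$ back to level $p+r+1$ via $\tau$ vanishes. Since $x^{(r+1)}$ is the image of $x^{(r)}\in \HXZ{p+r}$ under the natural map $\HXZ{p+r}\to \HXZ{p+r+1}$, the exact sequence \eqref{les1} yields $\piX{p+r+1}{q-r-1}(x^{(r+1)})=0$; and Lemma \ref{fl-lem1}(1) says that $\fF$ factors through $\piX$ via the injective $i_*$ of \eqref{E1exactsequence}, hence the obstruction vanishes. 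Applying Lemma \ref{fl-lem1}(4) at $(p',q',r')=(-1,p+q+1,p+r+2)$ then produces a class $[\tilde y]\in \FF{-1}{p+q+1}{p+r+1}/i_*\KY{p+r}{q-r}1$ with $\tau([\tilde y])=x^{(r+1)}$; choose any representative $\tilde y\in \FF{-1}{p+q+1}{p+r+1}\subset \FF pq r$.

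Next, set $z:=x^{(r)}-\tilde y\in \FF pq r$. By construction $\tau([z])=0$ in $\FF pq{r+1}$ modulo $i_*\KY{p+r}{q-r}1$, so applying Lemma \ref{fl-lem1}(4) at $(p,q,r)$ yields $\alpha\in \ZV{p+r+1}{q-r}r$ with $\psiF pq r(\alpha)\equiv z\pmod{i_*\KY{p+r}{q-r}1}$. The problem thereby reduces to producing $u\in \KX pq{q+e}$ whose image $u^{(r)}$ agrees with $\psiF pq r(\alpha)$ modulo $\FF{-1}{p+q+1}{p+r+1}+i_*\KY{p+r}{q-r}{q-r+e}$, since then $(x-u)^{(r)}\equiv \tilde y$ lies in $\tFF{-1}{p+q+1}{p+r+1}$.

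The main obstacle lies in the construction of $u$. Since $\alpha\in \ZV{p+r+1}{q-r}r$ need not lie in $\ZV{p+r+1}{q-r}\infty=\Image(\piV{p+r+1}{q-r})$ by \eqref{Zpqinfty}, $\alpha$ does not lift directly to $\HVZ{p+r+1}$, and $u$ must be built iteratively through the localization maps \eqref{mapofss} linking $X$, $Y$, and $V$. At each intermediate level $t$ with $1\leq t\leq r$ the obstruction to arranging simultaneously that $u^{(r)}$ hits the prescribed target and that $u\in \KX pq{q+e}$ (i.e., $u^{(q+e)}=0$) lies in the group $\ZBY{p+t}{q-t}\infty{q-t+e}$, which vanishes by $(*3)$; this produces a correction in $i_*\KY{p+t}{q-t}{q-t+e}$ that can be absorbed into the target subgroup via the iterated commutative diagram \eqref{gFpqrcomm}. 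Reconciling the two $\KY$-quotients (by $i_*\KY{p+r}{q-r}1$ from Lemma \ref{fl-lem1}(4) versus by $i_*\KY{p+r}{q-r}{q-r+e}$ from the definition of $\tFF$) across all the levels is the delicate part of the bookkeeping.
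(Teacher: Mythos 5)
There are genuine gaps in this proposal, and the argument as written would not go through.

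\textbf{Gap 1: the ``obstruction vanishes'' step confuses $\FF$ with $\tFF$.} The hypothesis of the claim places $x^{(r+1)}$ in $\tFF{-1}{p+q+1}{p+r+2}=\FF{-1}{p+q+1}{p+r+2}/i_*\KY{p+r+1}{q-r-1}{q-r-1+e}$, not in $\FF{-1}{p+q+1}{p+r+2}$ itself. Lemma \ref{fl-lem1}(4) is stated for the $\FF$'s, so to apply it you must first choose a lift $f\in\FF{-1}{p+q+1}{p+r+2}$ of $x^{(r+1)}$. Then $f=\bar x - k$ where $\bar x$ is the image of $x$ in $\HXZ{p+r+1}$ and $k\in i_*\KY{p+r+1}{q-r-1}{q-r-1+e}$. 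You correctly observe $\piX{p+r+1}{q-r-1}(\bar x)=0$, but $\piX(f)=-\piX(k)=-i_*\piY(k')$ where $k'\in\KY{p+r+1}{q-r-1}{q-r-1+e}$, so $\fF{-1}{p+q+1}{p+r+2}(f)=-\piY(k')\in\BY{p+r+1}{q-r-1}{q-r-1+e}$, which is not $0$ in general. Note that the relevant vanishing $\ZBY{p+r+1}{q-r-1}{\infty}{q-r-1+e}=0$ is the case $t=r+1$, which lies \emph{outside} the range $1\le t\le r$ covered by hypothesis $(*3)$, so you cannot appeal to $(*3)$ to fix this. Consequently the existence of the lift $\tilde y$ is not established, and the subsequent step ``$\tau([z])=0$'' inherits the problem. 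The paper avoids this entirely by working throughout with $\tFF$ and the map $\gF$ (which kills $i_*\KY$ and hence descends canonically to $\tFF$), via the diagram \eqref{gFpqrcomm}.

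\textbf{Gap 2: the construction of $u$ is not carried out.} The final paragraph of your proposal correctly identifies that $(*3)$ must be used to control obstructions in the groups $\ZBY{p+t}{q-t}{\infty}{q-t+e}$, and that the two different $\KY$-quotients must be reconciled, but it does not actually produce $u$. This is precisely where the paper's proof does its real work: it runs a descent over $s=0,1,\dots,p$ through the chain $\tFF{p-s}{q+s}{r+s}\subset\tFF pq r$ using the iterated commutative diagram \eqref{gFpqrcomm}, producing a $z\in\ZV{p+r+1}{q-r}r$ with $\epsilon:=x^{(r)}-\psiF pq r(z)\in\tFF{-1}{p+q+1}{p+r+1}$; then Lemma \ref{fl-lem2} (whose hypothesis is exactly $(*3)$) gives surjectivity of $\tFF{-1}{p+q+1}{p+1}\to\tFF{-1}{p+q+1}{p+r+1}$, showing that $\psiF pq r(z)\in\Image(\iota)$; and Lemma \ref{fl-lem3} then produces $u\in\KX pq{q+e}$ with $\iota(u)=\psiF pq r(z)$, whence $(x-u)^{(r)}=\epsilon$. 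Your one-shot application of Lemma \ref{fl-lem1}(4) at $(p,q,r+1)$ produces an $\alpha\in\ZV{p+r+1}{q-r}r$ which plays the role of the paper's $z$, but without the descent through $\tFF{p-s}{q+s}{r+s}$ there is no mechanism to show $\psiF pq r(\alpha)$ lies in $\Image(\iota)$, which is what Lemma \ref{fl-lem3} needs. The core of the argument is therefore missing.

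In short, the proposal contains the right ingredients (Lemmas \ref{fl-lem1}--\ref{fl-lem3}, hypothesis $(*3)$, bookkeeping of $\KY$-quotients) but does not assemble them: the lift at the start is not justified, and the descent at the end is asserted rather than performed.
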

\medbreak

For the proof of the claims, we need the following lemmas.

\begin{lem}\label{fl-lem2}
If $\ZBY {p+t}{q-t}\infty {q-t+e}=0$, then
$\tFF {-1}{p+q+1}{p+t}\to \tFF {-1}{p+q+1}{p+t+1}$
is surjective.
\end{lem}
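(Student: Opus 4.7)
The plan is to read $\Coker(\tilde\tau)$ off the exact sequence in Lemma \ref{fl-lem1}(4), applied with formal indices $(p,q,r)\leftarrow(-1,p+q+1,p+t+1)$ (noting $p+t+1\ge 1$, so the lemma applies). This yields the exactness at the middle of
$$\FF{-1}{p+q+1}{p+t}/i_*\KY{p+t-1}{q-t+1}{1} \rmapo{\tau} \FF{-1}{p+q+1}{p+t+1} \rmapo{\fF{-1}{p+q+1}{p+t+1}} \ZY{p+t}{q-t}{p+t+1}\,,$$
so $\FF{-1}{p+q+1}{p+t+1}/\Image(\tau)$ embeds into $\ZY{p+t}{q-t}{p+t+1}$.

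The first bookkeeping step is to check that $\tau$ descends to a map $\tilde\tau\,:\,\tFF{-1}{p+q+1}{p+t}\to\tFF{-1}{p+q+1}{p+t+1}$. This follows from the fact that the natural map $\HYZ{p+t-1}\to\HYZ{p+t}$ carries $\KY{p+t-1}{q-t+1}{q-t+1+e}$ into $\KY{p+t}{q-t}{q-t+e}$, as both groups are defined by having vanishing image in $\HYZ{p+q+e}$. Next, Lemma \ref{fl-lem1}(2) identifies the composite $\HYZ{p+t}\rmapo{i_*}\FF{-1}{p+q+1}{p+t+1}\rmapo{\fF{-1}{p+q+1}{p+t+1}}\EY{p+t}{q-t}{1}$ with $\piY{p+t}{q-t}$, and via \eqref{Bpqr} the image of $\KY{p+t}{q-t}{q-t+e}$ under $\piY{p+t}{q-t}$ is exactly $\BY{p+t}{q-t}{q-t+e}$. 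Combining these observations yields an injection
$$\tFF{-1}{p+q+1}{p+t+1}/\tilde\tau\bigl(\tFF{-1}{p+q+1}{p+t}\bigr) \hookrightarrow \ZY{p+t}{q-t}{p+t+1}/\BY{p+t}{q-t}{q-t+e}\,.$$

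The final key observation is the dimensional stabilization $\ZY{p+t}{q-t}{p+t+1}=\ZY{p+t}{q-t}{\infty}$: for $r\ge p+t+1$ the differential $d^r$ out of $\EY{p+t}{q-t}{r}$ lands in $\bigoplus_{y\in Y_{p+t-r}}H_\bullet(y)=0$, since $p+t-r<0$ forces $Y_{p+t-r}=\emptyset$. Hence the right-hand side of the displayed injection equals $\ZBY{p+t}{q-t}{\infty}{q-t+e}$, which vanishes by hypothesis, proving that $\tilde\tau$ is surjective. The only real subtlety here is keeping the three distinct $\KY$-indexings---$1$, $q-t+1+e$, and $q-t+e$---straight; the rest is a direct consequence of the exact sequences already assembled in Lemma \ref{fl-lem1} together with the trivial dimensional vanishing in the $Y$-spectral sequence.
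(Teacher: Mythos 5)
Your proof is correct and follows exactly the route taken in the paper: apply Lemma \ref{fl-lem1}(4) with $(p,q,r)$ replaced by $(-1,p+q+1,p+t+1)$, pass to the quotients $\tFF{-1}{p+q+1}{\cdot}$ using Lemma \ref{fl-lem1}(2) and \eqref{Bpqr} to compute $\fF{-1}{p+q+1}{p+t+1}(i_*\KY{p+t}{q-t}{q-t+e})=\BY{p+t}{q-t}{q-t+e}$, and conclude via $\ZY{p+t}{q-t}{p+t+1}=\ZY{p+t}{q-t}{\infty}$. You have simply spelled out the two bookkeeping steps (descent of $\tau$ and the exactness of the quotient sequence) that the paper leaves implicit.
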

\begin{proof}
This follows from the exact sequence
$$ \tFF {-1}{p+q+1}{p+t}\to \tFF {-1}{p+q+1}{p+t+1}
\rmapo{\fF {-1}{p+q+1}{p+t+1}} \ZBY {p+t}{q-t}{p+t+1}{q-t+e}$$
which is deduced from \ref{fl-lem1}(4) and \eqref{Bpqr},
together with the facts that
$$
\fF {-1}{p+q+1}{p+t+1}(i_*\KY {p+t}{q-t}{q-t+e})=\BY {p+t}{q-t}{q-t+e}
$$
by \ref{fl-lem1}(2) and \eqref{Bpqr} and that
$\ZY {p+t}{q-t}{p+t+1}=\ZY {p+t}{q-t}{\infty}$.
\end{proof}

\begin{lem}\label{fl-lem3}
Consider the maps
$$
\tpsiF pq r : \ZV {p+r+1}{q-r} r \to \tFF pq r, \quad
\iota : \HXZ p \to \tFF pq r,
$$
where the first map is induced by $\psiF pq r$ \eqref{psiFpqr} and
the second by the natural map $\HXZ p\to \HXZ {p+r}$.
Assuming $r\leq q+e-1$, we have
$$ \Image(\tpsiF pq r)\cap \Image(\iota)\subset \iota(\KX pq {q+e}).$$
\end{lem}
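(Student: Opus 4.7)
The plan is a diagram chase: I unwind the definition of $\tpsiF pq r$, translate the hypothesis into an identity in $\HXZ{p+r}$, and push that identity forward to $\HXZ{p+q+e}$, where both sides will necessarily vanish.

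Concretely, suppose $\iota(z) = \tpsiF pq r(\alpha)$ in $\tFF pq r$ for some $z \in \HXZ p$ and $\alpha \in \ZV{p+r+1}{q-r}{r}$. I use the isomorphism $j^*: \EX{p+r+1}{q-r}{1}/\EY{p+r+1}{q-r}{1} \isom \EV{p+r+1}{q-r}{1}$ coming from \eqref{E1exactsequence} to choose a lift $\tilde\alpha \in \EX{p+r+1}{q-r}{1}$ of $\alpha$. Tracing through the definition \eqref{psiFpqr} of $\psiF pq r$, together with the further surjection $\FF pq r/i_*\KY{p+r}{q-r}{1} \twoheadrightarrow \tFF pq r$ that defines $\tpsiF pq r$ (which is legitimate precisely because $q-r+e \geq 1$, i.e.\ $r \leq q+e-1$), the hypothesis becomes the identity
$$z_{p+r} \;=\; \delX{p+r+1}{q-r}(\tilde\alpha) \,+\, i_*(w) \qquad \text{in } \HXZ{p+r},$$
where $z_{p+r}$ denotes the image of $z$ under the transition map $\HXZ p \to \HXZ{p+r}$ and $w \in \KY{p+r}{q-r}{q-r+e}$.

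I then push this equality forward to $\HXZ{p+q+e}$. The first term on the right lies in the image of $\delX{p+r+1}{q-r}$, which by the exactness of the localization sequence \eqref{exactsequence1} already dies in $\HXZ{p+r+1}$; since the hypothesis $r \leq q+e-1$ gives $p+r+1 \leq p+q+e$, it dies \emph{a fortiori} in $\HXZ{p+q+e}$. For the second term, $w \in \KY{p+r}{q-r}{q-r+e}$ means that $w$ dies in $\HYZ{p+q+e}$, so by functoriality of the proper pushforward $i_*$ with respect to the transition maps, $i_*(w)$ also dies in $\HXZ{p+q+e}$. Hence $z_{p+q+e} = 0$, that is $z \in \KX pq{q+e}$, and therefore $\iota(z) \in \iota(\KX pq{q+e})$, which is the desired inclusion.

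I do not foresee any real obstacle: the argument relies only on the localization sequence \eqref{exactsequence1} of the niveau spectral sequence construction, on the functoriality of $i_*$ in the transition maps for the niveau filtration, and on a careful unwinding of the definitions of $\FF pq r$, $\psiF pq r$ and $\tFF pq r$. The numerical hypothesis $r \leq q+e-1$ is used in exactly the same spirit in both places it appears: it guarantees that the stage $\HXZ{p+r+1}$ at which $\delX{p+r+1}{q-r}(\tilde\alpha)$ already becomes zero is no later than the stage $\HXZ{p+q+e}$ at which we test $z$, and simultaneously makes $\tpsiF pq r$ well-defined in the first place.
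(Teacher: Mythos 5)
Your argument is correct and is essentially the same diagram chase as the paper's proof: the paper constructs a map $\beta: \tFF pq r \to \HXZ{p+q+e}$ factoring $\alpha = \beta\circ\iota$ (well-defined exactly because $\KY{p+r}{q-r}{q-r+e}=\Ker(\HYZ{p+r}\to\HYZ{p+q+e})$), and invokes the complex \eqref{psicomplex} plus $p+r+1\leq p+q+e$ to get $\Image(\tpsiF pq r)\subset\Ker(\beta)$, while you unwind the same two facts explicitly (the $\delX$ piece dies in $\HXZ{p+r+1}$ by exactness of \eqref{exactsequence1}, the $i_*(w)$ piece dies in $\HXZ{p+q+e}$ by definition of $\KY$ and functoriality).
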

\begin{proof}
We have the commutative diagram
$$
\begin{CD}
@. \HXZ p @>{\alpha}>> \HXZ {p+q+e} \\
@. @VV{\iota}V               @AA{\beta}A \\
\ZV {p+r+1}{q-r} r @>{\tpsiF pq r}>> \tFF pq r @>\subset>>
\HXZ {p+r}/i_*\KY {p+r}{q-r}{q-r+e}\\
\end{CD}
$$
where $\beta$ exists since
$\KY {p+r}{q-r}{q-r+e}=
\Ker\big(\HYZ {p+r}\to \HYZ {p+q+e}\big)$.
By the assumption we have $p+r+1\leq p+q+e$ so that \eqref{psicomplex} implies
$\Image(\tpsiF pq r)\subset\Ker(\beta)$.
Therefore \ref{fl-lem2} follows by noting that
$\Ker(\alpha)=\KX pq {q+e}$.
\end{proof}
\bigskip

Now we show \ref{fl-claim1}. We use the following commutative diagram
with exact horizontal sequences, which is deduced from \eqref{gFpqrcomm}:
\begin{equation*}\label{gFpqrcomm1}
\begin{CD}
 \ZV {p+q+e}{-e+1}{q+e+s} @>{\hookrightarrow}>> \ZV {p+q+e}{-e+1}{q+e+s-1}
@>{\dV {p+q+e}{-e+1}{q+e+s}}>> \BBV {p-s}{q+s} {q+e+s} {q+e+s-1} \\
 @VV{\psiF {p-s-1}{q+s+1}{q+e+s}}V @VV{\psiF {p-s}{q+s}{q+e+s-1}}V @VVV \\
 \tFF {p-s-1}{q+s+1}{q+e+s} @>{\hookrightarrow}>> \tFF {p-s}{q+s} {q+e+s-1}
@>{\gF {p-s}{q+s} {q+e+s-1}}>> \ZBV {p-s}{q+s} \infty {q+e+s-1} \\
\end{CD}
\end{equation*}
Take $y_1\in \ZV {p+q+e}{-e+1}{q+e-1}$ such that
$j^*(\piX pq (x))=\dV {p+q+e}{-e+1}{q+e}(y_1)$.
By the above diagram with $s=0$ we have
$$\epsilon_1:=x^{(q+e-1)}-\psiF pq {q+e-1}(y_1)\in
\tFF {p-1}{q+1} {q+e}\subset \tFF {p}{q} {q+e-1}.$$
Let $s\geq 1$ be an integer and assume that there exists
$z_s\in \ZV {p+q+e}{-e+1}{q+e-1}$ such that
$$\epsilon_s:=x^{(q+e-1)}-\psiF pq {q+e-1}(z_s)\in
\tFF {p-s}{q+s} {q+e+s-1}\subset \tFF {p}{q} {q+e-1}.$$
By the assumption $(*1)$ we can find
$y_s\in \ZV {p+q+e}{-e+1}{q+e+s-1}$ such that
$\gF {p-s}{q+s}{q+e+s-1}(\epsilon_s)=\dV {p+q+e}{-e+1}{q+e+s}(y_s)$.
By the above diagram we get
$$\epsilon_{s+1}:=\epsilon_s-\psiF {p-s}{q+s} {q+e+s-1}(y_s)\in
\tFF {p-s-1}{q+s+1} {q+e+s}\subset \tFF {p}{q} {q+e-1}.$$
By induction this shows that there exists
$z\in \ZV {p+q+e}{-e+1}{q+e-1}$ such that
$$\epsilon:=x^{(q+e-1)}-\psiF pq {q+e-1}(z)\in
\tFF {-1}{p+q+1} {p+q+e}\subset \tFF {p}{q} {q+e-1}.$$
By the assumption $(*2)$ \ref{fl-lem2} implies
$\tFF {-1}{p+q+1} {p+1} \to \tFF {-1}{p+q+1} {p+q+e}$
is surjective. Since $\tFF {-1}{p+q+1} {p+1}\subset\HXZ p/i_*\KY pq {q+e}$
we get $\epsilon\in \tFF {-1}{p+q+1} {p+q+e}\cap\Image(\HXZ p)$
so that $\psiF pq {q+e-1}(z)\in \Image(\HXZ p)$.
By \ref{fl-lem3}, $\psiF pq {q+e-1}(z)\in \Image(\KX pq {q+e})$,
which completes the proof of \ref{fl-claim1}.
\medbreak

Next we show \ref{fl-claim2}.
We use the following commutative diagram deduced from \eqref{gFpqrcomm}:
\begin{equation*}\label{gFpqrcomm2}
\begin{CD}
 \ZV {p+r+1}{q-r}{r+s+1}  @>{\hookrightarrow}>>  \ZV {p+r+1}{q-r}{r+s}
@>{\dV {p+r+1}{q-r}{r+s+1}}>> \BBV {p-s}{q+s} {r+s+1}{r+s} @>>> 0\\
 @VV{\psiF {p-s-1}{q+s+1}{r+s+1}}V @VV{\psiF {p-s}{q+s}{r+s}}V @VVV \\
 \tFF {p-s-1}{q+s+1}{r+s+1} @>{\hookrightarrow}>> \tFF {p-s}{q+s}{r+s}
 @>{\gF {p-s}{q+s} {r+s}}>> \ZBV {p-s}{q+s} \infty {r+s} \\
 @VV{\tau}V @VV{\tau}V @VVV \\
 \tFF {p-s-1}{q+s+1}{r+s+2} @>{\hookrightarrow}>> \tFF {p-s}{q+s}{r+s+1}
@>{\gF {p-s}{q+s} {r+s+1}}>> \ZBV {p-s}{q+s} \infty {r+s+1} \\
\end{CD}
\end{equation*}
where the vertical and horizontal sequences are exact.
By the diagram with $s=0$ the assumption
$x^{(r+1)}\in \tFF {-1}{p+q+1}{p+r+2}$ implies that there exists
$y_1\in \ZV {p+r+1}{q-r}{r}$ such that
$\gF pq r(x^{(r)})=\dV {p+r+1}{q-r}{r+1}(y_1)$ and hence
$$\epsilon_1:=x^{(r)}-\psiF pq r(y_1)\in
\tFF {p-1}{q+1} {r+1}\subset \tFF {p}{q} {r}.$$
Let $p\geq s\geq 1$ be an integer and assume that there exists
$z_s\in \ZV {p+r+1}{q-r} r$ such that
$$\epsilon_s:=x^{(r)}-\psiF pq r(z_s)\in
\tFF {p-s}{q+s} {r+s}\subset \tFF {p}{q} {r}.$$
By \eqref{psicomplex} the assumption $x^{(r+1)}\in \tFF {-1}{p+q+1}{p+r+2}$
implies the image of $\epsilon_s$ in $\tFF {p-s}{q+s}{r+s+1}$ lies in
$\tFF {-1}{p+q+1}{p+r+2}$.
Hence the same argument as before shows that there exists
$y_s\in \ZV {p+r+1}{q-r}{r+s}$ such that
$$\epsilon_{s+1}:=\epsilon_s-\psiF {p-s}{q+s} {r+s}(y_s)\in
\tFF {p-s-1}{q+s+1} {r+s+1}\subset \tFF {p}{q} {r}.$$
By induction this shows that there exists
$z\in \ZV {p+r+1}{q-r} r$ such that
$$\epsilon:=x^{(r)}-\psiF pq r(z)\in \tFF {-1}{p+q+1} {p+r+1}.$$
By the assumption $(*3)$ \ref{fl-lem2} implies
$\tFF {-1}{p+q+1} {p+1} \to \tFF {-1}{p+q+1} {p+r+1}$
is surjective. Now \ref{fl-claim2} follows from \ref{fl-lem3}
by the same argument as before.

\bigskip

\noindent
\it Proof of \ref{fl-lem1}. \rm

First we show (1).
The uniqueness of $\fF pq r$ is a direct consequence of the injectivity
of $\EY pq 1 \to \EX pq 1$. To show its existence, we consider the
following commutative diagram:
$$
\begin{CD}
@.  \FF pq r @>{\subset}>> \HXZ {p+r} @>{j^*}>> \HVZ {p+r} @.\\
@.  @. @VV{\piX {p+r}{q-r}(X)}V @VV{\piV {p+r}{q-r}(V)}V  @.\\
0 @>>> \EY {p+r}{q-r} 1 @>{i_*}>> \EX {p+r}{q-r} 1 @>{j^*}>>
\EV {p+r}{q-r} 1 @>>> 0 \\
\end{CD}
$$
We have $j^*\circ \piX {p+r}{q-r}(X)(\FF pq r)=0$ since
$\Ker(\piV {p+r}{q-r} (V))$ contains $\Image(\HVZ p)$ for $r\geq 1$
(cf. \eqref{les1}). Hence we get the induced map
$\fF pq r :\FF pq r \to \EY {p+r}{q-r} 1$.
It remains to show that its image lies in $\ZY {p+r}{q-r} r$.
We consider the following diagram:
\begin{equation}\label{gfcomm}
\begin{CD}
 \FF pq r @>{j^*}>> \HVZ {p+r} @<<< \HVZ {p} \\
@VV{\fF pq r}V @VV{\partial}V  @VV{\partial}V \\
 \EY {p+r}{q-r} 1 @>{\delY {p+r}{q-r}}>> \HYZa {p+r-1} @<<< \HYZa {p-1}\\
\end{CD}
\end{equation}
where $\partial$ is the map inducing $\partial$ in \eqref{mapofss}.
Noting
$\ZY {p+r}{q-r} r={\delY {p+r}{q-r}^{-1}}(\Image(\HYZ {p-1})$
(cf. \ref{Zpqr}), it remains to show that the squares are commutative.
For this we need to recall the definition of $\partial$.
For a closed subset $T\subset V$ let $\bar{T}$ be the closure.
For $T\in \cZ_s(V)$ one then has $\bar{T}\cap Y \in \cZ_{s-1}(Y)$,
and we have the localization sequences
$$
\cdots\to H_a(\bar{T}\cap Y) \to H_a(\bar{T}) \to H_a(T) \rmapo{\partial}
H_{a-1}(\bar{T}\cap Y) \to \cdots$$
Taking the limits over $T\in \cZ_r(V)$, one gets
$$ \partial : H_a(\cZ_r(V)) \to H_{a-1}(\cZ_{r-1}(Y)).$$
From the definition, the commutativity of the right square of \eqref{gfcomm}
is obvious. For the left square, it suffices to check
that the following diagram is commutative:
$$
\begin{CD}
H_{s+t}(\cZ_s(X)) @>{\piX st}>> \EX st 1 @>{p_{s,t}}>> \EY st 1 \\
@VV{j^*}V @. @| \\
H_{s+t}(\cZ_s(V))  @>{\partial}>> H_{s+t-1}(\cZ_{s-1}(Y)) @<{\delV st}<<
\EY st 1\\
\end{CD}
$$
where $p_{s,t}$ is the projection arising from the decomposition
$$ \EX st 1 =\underset{x\in X_s}{\bigoplus}H_{s+t}(x)=
\EY st 1 \oplus \EV st 1,$$
which comes from the fact $X_s=Y_s\cup V_s$ since $X_s\cap V=V_s$
(cf. \ref{dimension}).
Represent an element of $H_{s+t}(\cZ_s(X))$ by an element of
$H_{s+t}(\bar{W} \cup Z)$, where
$W\in \cZ_s(V)$ with its closure $\bar{W}$ in $X$ and $Z\in \cZ_s(Y)$.
We may enlarge $Z$ to assume $Z\supset \bar{W} \cap Y$, and hence
$\bar{W} \cap Y=\bar{W} \cap Z$.
We write $S=\bar{W} \cup Z$ and $T=\bar{W}  \cap Z$.
We have the localization sequence for the pair $(S,T)$:
$$
H_{s+t}(S) \to H_{s+t}(S-T) \rmapo{\partial_{(S,T)}} H_{s+t-1}(T).
$$
Noting $S-T=(\bar{W}-T)\coprod (Z-T)$, we have the decomposition
$$  H_{s+t}(S-T) =H_{s+t}(\bar{W}-T)\oplus H_{s+t}(Z-T)$$
and then $\partial_{(S,T)}$ is identified with
$\partial_{(\bar{W},T)} \cdot p_W + \partial_{(Z,T)} \cdot p_Z$, where
$$
p_W : H_{s+t}(S-T)\to H_{s+t}(\bar{W}-T)\qaq
p_Z : H_{s+t}(S-T)\to H_{s+t}(Z-T)
$$
are the projections and
$$
\partial_{(\bar{W},T)}:H_{s+t}(\bar{W}-T)\to H_{s+t-1}(T)\qaq
\partial_{(Z,T)}:H_{s+t}(Z-T)\to H_{s+t-1}(T)$$
are the boundary maps for the pairs $(\bar{W},T)$ and $(Z,T)$ respectively.
Thus we get
$$\partial_{(\bar{W},T)} \cdot p_W \cdot \nu+
\partial_{(Z,T)} \cdot p_Z\cdot \nu=0,.$$
where
$\nu: H_{s+t}(S) \to H_{s+t}(S-T)$.
Note $\bar{W}-T=S\cap V$ and $p_W\cdot\nu$ is identified with $j^*$ for
the open immersion $j:S\cap V\hookrightarrow S$. Hence we get
$$\partial_{(\bar{W},T)} \cdot j^* + \partial_{(Z,T)} \cdot p_Z\cdot \nu=0.$$
Now the desired commutativity follows from the following diagram
where all squares are commutative up to sign:
$$
\begin{CD}
\HstXZZ s @<{\piX st}<< \HstXZ s @>{j^*}>> \HstVZ s @. \\
@| @AAA @AAA @. \\
\EX st 1 @<<< H_{s+t}(S) @>{j^*}>> H_{s+t}(\bar{W}-T) @>>> \HstVZ s \\
@VV{p_{s,t}}V @VV{p_Z\cdot\nu}V
@VV{\partial_{(\bar{W},T)}}V @VV{\partial}V \\
\EY st 1 @<<<H_{s+t}(Z-T) @>{\partial_{(Z,T)}}>>
H_{s+t-1}(T) @>>> \HstoYZ {s-1}\\
@| @VVV @VVV @. \\
\HstYZZ s @= \EY st 1 @>{\delY st}>> \HstoYZ {s-1} @.\\
\end{CD}
$$
This completes the proof of \ref{fl-lem1}(1).
\medbreak

\ref{fl-lem1}(2) follows immediately from the definition of $\fF pq r$
and \ref{fl-lem1}(3) from the commutativity of \eqref{gfcomm}.
Finally we show \ref{fl-lem1}(4). It suffices to show the exactness at
$\FF pq r$. In view of the injectivity of
$\EY {p+r}{q-r} 1 @>{i_*}>> \EX {p+r}{q-r} 1$, we have
\begin{align*}
\Ker(\fF pq r)
&= \FF pq r \cap \Ker\big(\HXZ {p+r} @>{\piX pq}>> \EX {p+r}{q-r} 1\big) \\
&= \Ker\big(\frac{\HXZ {p+r-1}}{\KX {p+r-1}{q-r+1} 1} @>{j^*}>>
\frac{\HVZ {p+r}}{\Image(\HVZ p)}\big)
\end{align*}
Consider the commutative diagram
$$
\displaystyle{
\begin{CD}
@. @. \KX {p+r-1}{q-r+1} 1 @>{j^*}>> \KV {p+r-1}{q-r+1} 1 \\
@. @. @VVV @VVV \\
0@>>> \FF pq {r-1} @>>> \HXZ {p+r-1} @>{j^*}>>
\frac{\HVZ {p+r-1}}{\Image(\HVZ p)} \\
@. @VV{\tau}V @VVV @VVV \\
0@>>> \Ker(\fF pq r)  @>>> \frac{\HXZ {p+r-1}}{\KX {p+r-1}{q-r+1} 1} @>{j^*}>>
\frac{\HVZ {p+r}}{\Image(\HVZ p)} \\
\end{CD}
}
$$
where all vertical and horizontal sequences are exact.
Thus, to show the surjectivity of $\tau$ in the diagram, it suffices to prove
the surjectivity of
$\KX {p+r-1}{q-r+1} 1 @>{j^*}>> \KV {p+r-1}{q-r+1} 1$.
By noting
\begin{align*}
\KX {p+r-1}{q-r+1} 1 &= \Image\big(\EX {p+r}{q-r+1} 1 @>{\delX {p+r}{q-r+1}}>>
\HXZ {p+r-1} \big),\\
\KV {p+r-1}{q-r+1} 1 &= \Image\big(\EV {p+r}{q-r+1} 1 @>{\delV {p+r}{q-r+1}}>>
\HVZ {p+r-1} \big),
\end{align*}
it follows from the surjectivity of
$\EX {p+r}{q-r+1} 1 @>{j^*}>> \EV {p+r}{q-r+1} 1$.
This completes the proof of \ref{fl-lem1}.
$\square$

\bigskip

\section{Kato complex of a homology theory}
\bigskip

Let $\cC$ be as in the previous section.
We assume $B=\Spec(k)$ for a field $k$.
Let $\cS\subset \cC$ be the subcategory of smooth projective schemes over $k$.

\begin{defn}\label{ZS} (\cite{GS})
\noindent
\begin{enumerate}
\item[(1)]
Let $\ZS$ (resp. $\CorS$) be the category with the same objects as $\cS$, but with
$$\Hom_{\ZS}(X,Y) =
\underset{i \in I,j\in J}{\bigoplus} \bZ\Hom_{\cS}(X_j,Y_i) $$
$$(\text{resp.} \quad \Hom_{\CorS}(X,Y) =\underset{i \in I,j\in J}{\bigoplus}
\CH^{\dim(Y_i)}(X_j \times Y_i)\quad)$$
for $X,Y\in OB(\cS)$, where
$X_j$ ($j \in J$) and $Y_i$ ($i \in I$) are the connected components of $X$
and $Y$ respectively and
$\bZ\Hom_{\cS}(X_j,Y_i) $ denotes the free abelian group on
$\Hom_{\cS}(X_j,Y_i) $.
It is easy to check that $\ZS$ and $\CorS$ are additive categories and
the coproduct
$X\oplus Y$ of $X,Y\in Ob(\cS)$ is given by $X\coprod Y$. There are natural
functors
\begin{equation}\label{functors}
\cS \to \ZS \to \CorS,
\end{equation}
where the second functor is additive and it maps $f\in \Hom_{\cS}(X,Y) $ to
the class of its graph.
\item[(2)]
For a simplicial object in $\cS$:
$$
X_\bullet\;:\; \quad
\cdots X_2 \;
\begin{matrix}
\rmapo{\delta_0}\\
\lmapo{s_0}\\
\rmapo{\delta_1}\\
\lmapo{s_1}\\
\rmapo{\delta_2}\\
\end{matrix}
\; X_1 \;
\begin{matrix}
\rmapo{\delta_0}\\
\lmapo{s_0}\\
\rmapo{\delta_1}\\
\end{matrix}
\; X_0 ,
$$
we define the complex in $\ZS$:
$$
\bZ X_\bullet\;:\; \quad
\cdots \; \to X_n \rmapo{\partial_n} X_{n-1} \to \cdots\quad
(\partial_n=\sum_{j=0}^n (-1)^j \delta_j).
$$
\item[(3)]
Let $\Lambda$ be a module. To a chain complex in $\CorS$:
$$ X_\bullet\;:\; X_n \rmapo{c_n} X_{n-1} \rmapo{c_{n-1}}
\cdots \to X_1\rmapo{c_1} X_0, $$
we associate a complex of modules called the graph complex of $X_\bullet$:
$$ \graphL {X_\bullet} \;:\;
\Lambda^{\pi_0(X_n)} \rmapo{c_{n*}} \Lambda^{\pi_0(X_{n-1})}
\rmapo{c_{n-1 *}} \cdots \to
\Lambda^{\pi_0(X_1)} \rmapo{c_{1*}} \Lambda^{\pi_0(X_0)}. $$
Here, for $X,Y\in \cS$ connected and for $c\in \CH^{\dim(Y)} (X\times Y)$,
$c_*:\Lambda\to\Lambda$ is the multiplication by
$\sum_{i=1}^N n_i [k(c_i):k(X)]$ where $c=\sum_{i=1}^N n_i c_i$ with
$n_i\in \bZ$ and $c_i\subset X\times Y$,
closed integral subschemes.
For a chain complex $X_\bullet$ in $\ZS$, we let $\graphL{X_\bullet}$
denote the graph complex associated to
the image of $X_\bullet$ in $\CorS$ (cf. \eqref{functors}).
\end{enumerate}
\end{defn}
\medbreak

\begin{defn}\label{defHK}
Fix an integer $e\geq 0$.
\begin{itemize}
\item[(1)]
Let $H$ be a homology theory on $\cC_*$ and let
$$
E^1_{a,b}(X)=\bigoplus_{x\in X_a}H_{a+b}(x) \Rightarrow H_{a+b}(X).
$$
be the niveau spectral sequence associated to $H$. Then
$H$ is \it leveled above $e$ \rm if
\begin{equation}\label{KHcondition}
\EX ab 1 =0 \qforall b<-e \mbox{ and all } X\in Ob(\cC).
\end{equation}
We write $\Lam_H=\Hempty {-e} B$ and call it the coefficient module of $H$.
\item[(2)]
Let $H$ be as in (1). For $X\in Ob(\cC)$ with $d=\dim(X)$, define the Kato
complex of  $X$ by
$$
 \KC X \;:\; \EX d{-e} 1 \to \EX {d-1} {-e} 1 \to \cdots \to \EX 1{-e} 1
\to \EX 0{-e} 1,
$$
where $\EX a {-e} 1$ is placed in degree $a$ and the differentials are
the $d^1$-differentials.
\item[(3)]
We denote by $\KH a X$ the homology group of $\KC X$ in degree $a$ called
the Kato homology of $X$.
By \eqref{KHcondition}, we have the edge homomorphism
\begin{equation}\label{edgehom}
 \edgehom a \;:\; \Hempty {a-e} X \to \KH a X =\EX a {-e} 2.
\end{equation}
\end{itemize}
\end{defn}
\medbreak

\begin{rem}\label{remHK}
If $H$ is leveled above $e$, then the homology theory
$\widetilde{H}=H[-e]$ given by $\widetilde{H}_a(X)=H_{a-e}(X)$ for
$X\in Ob(\cC)$ is leveled above $0$.
Thus we may consider only a homology theory leveled above $0$
without loss of generality.
\end{rem}

In what follows we fix a homology theory $H$ as in \ref{defHK}.

A proper morphism $f:X\to Y$ and an open immersion $j:V\to X$ induce maps
of complexes
$$ f_* : \KC X \to \KC Y,\quad j^*: \KC X \to \KC V,$$
respectively. For a closed immersion $i:Z\hookrightarrow X$ and its complement
$j:V \hookrightarrow X$,
we have the following exact sequence of complexes due to
\eqref{E1exactsequence}:
\begin{equation}\label{KCexactsequence}
0\to \KC Z \rmapo{i_*} \KC X \rmapo{j^*} \KC V \to 0.
\end{equation}
\medbreak

By definition we have
$$ \KC B =\Lam_H[0] \quad (B=\Spec(k))$$
where $\Lam_H [0]$ is the complex with components $\Lam_H$ in degree $0$,
and $0$ in the other degrees.
Thus, if $f: X\to B$ is proper, we get a map of complexes
\begin{equation}\label{KCtrace1}
 f_*\; : \;\KC X \to \Lam_H[0].
\end{equation}
For a chain complex in $\ZS$:
$$ X_\bullet\;:\; X_n \rmapo{f_n} X_{n-1} \rmapo{f_{n-1}} \cdots
\to X_1\rmapo{f_1} X_0 $$
we denote by
$\KC{X_\bullet}$ the total complex of the double complex
$$ \KC{ X_n} \rmapo{f_{n*}} \KC{X_{n-1}} \rmapo{f_{n-1 *}} \cdots
\to \KC{X_1}\rmapo{f_{1*}} \KC{X_0}. $$
The maps \eqref{KCtrace1} for each $n\in \bZ$ induces a natural map of
complexes
called the graph homomorphism:
\begin{equation}\label{graphhomX}
\graphhom{} {X_\bullet} \;:\;  \KC{X_\bullet} \to \graph{X_\bullet}
\end{equation}

\begin{ex}\label{exHK1}
Assume $B=\Spec(K)$ where $K$ is a finite field with the absolute Galois group
$G_K=\Gal(\overline{K}/K)$. Fix a torsion $G_K$-module
$\Lam$, which is viewed as a sheaf on $B_{\et}$.
Taking $\cK=\Lam$ in the example \ref{exH1},
one gets a homology theory $H = H^{\et}(-,\Lam)$ on $\cC$:
$$
H^\et_a(X,\Lam):=H^{-a}(X_{\et}, R\,f^{!}\Lam)
\qfor f:X\rightarrow B \text{ in } \cC.
$$
For $X$ smooth of pure dimension $d$ over $k$, we have
(cf. \cite{BO} and \cite{JS1}, Th.2.14)
\begin{equation}\label{exH1smooth}
H^\et_a(X,\Lam) = H^{2d-a}_{\et}(X,\Lam(d))\,,
\end{equation}
where, for an integer $r>0$, $\Lam(r)$ is defined as follows. If $\Lam$ is
annihilated by an integer $n>1$, define $\Lam(r)=\Lam\otimes \nz(r)$ where $\nz(r)$ is
a bounded complex of sheaves on $X_{\et}$, defined as follows:
Writing $n=m p^t$ with $p=\ch(K)$ and $(p,m)=1$,
\begin{equation}\label{Tatetwist}
\nz(r)=\mu_m^{\otimes r}\oplus \DWtlog r X[-r],
\end{equation}
where $\mu_m$ is the \'etale sheaf of $m$-th roots of unity, and
$\DWtlog r X$ is the logarithmic part of the de Rham-Witt sheaf
$\DWt r X$ \cite{Il}, I 5.7. This definition does not depend on the choice on $n$.
In the general case case we define
$\Lambda(r)=\indlim n \Lambda_n(r)$ where $\Lambda_n=\Ker(\Lambda \rmapo n \Lambda)$.
Here the inductive limit is taken for the transition morphisms
$$
\Lam_n\otimes \nz(r) =\Lam_n\otimes \bZ/n'\bZ(r) \rmapo {i\otimes id} \Lam_{n'}\otimes \bZ/n'\bZ(r)
$$
for $n \mid n'$.

By \eqref{exH1smooth} we get for $X$ general
$$ E^1_{a,b}(X) =\underset{x\in X_a}{\bigoplus} H^{a-b}(x,\Lam(a)).$$
This is a homology theory leveled above $1$:
The condition \eqref{KHcondition} follows from the fact that
$cd(\k(x))=a+1$ for $a\in X_a$ since $cd(K)=1$.
The coefficient module $\Lam_H$ of $H=H^{\et}(-,\Lam)$ is isomorphic to $\Lam$ since
$$H^1(K,\Lam) = \Hom_{cont}(G_K,\Lam) \isom \Lam\; ;\;
\chi \to \chi(Frob_K),$$
where $Frob_K\in G_K$ is the Frobenius substitution.
The arising complex $\KC X$ is written as:
\begin{multline*}\label{KCfinitefield}
\cdots \sumd X a H_{\et}^{a+1}(x,\Lam(a))\to
\sumd X {a-1} H_{\et}^{a}(x,\Lam(a-1))\to \cdots \\
\cdots \to\sumd X 1 H_{\et}^{2}(x,\Lam(1))\to
\sumd X 0 H_{\et}^{1}(x,\Lam).
\end{multline*}
Here the term $\sumd X a$ is placed in degree $i$.
In case $\Lam=\nz$ it is identified up to sign with the complex considered
by Kato in \cite{K} thanks to \cite{JSS}.
\end{ex}

\begin{ex}\label{exHK2}
Assume $B=\Spec(K)$ where $K$ is any field.
Let $G_K$ and $\Lam$ be as in \ref{exHK1} and assume $\Lam$ is finite.
We consider the homology theory $H^D(-,\Lam)$ in the example \ref{exH2}:
$$
\HDL a X:=\Hom\big(H^{a}_c(X,\Lam^\vee),\qz\big) \qfor X\in Ob(\cC).
$$
where $\Lam^\vee=\Hom(\Lam,\qz)$. This homology theory is leveled above $0$:
The condition \eqref{KHcondition} follows from the fact that
$H^q_c(X,\Lam^\vee)=0$ for $q<\dim(X)$ if $X$ is affine scheme over $K$
due to the affine Lefschetz theorem.
The coefficient module $\Lam_H$ of $H=H^{D}(-,\Lam)$ is equal to $\Lam$.
If $K$ is finite, $H^D(-,\Lam)$ shifted by degree $1$ coincides
with $H^{\et}(-,\Lam)$ in \ref{exHK1} due to the Poincar\'e duality for
\'etale cohomology and the Tate duality for Galois cohomology of finite field
(cf. the proof of \ref{HDLE2} below).
\end{ex}

\begin{ex}\label{exHK3}
We will consider the following variants of the homology theories in
\ref{exHK1} and \ref{exHK2}. Fix a prime $l$ and assume given a free
$\zl$-module $T$ of finite rank on which $G_K$ acts continuously.
For each integer $n>1$ put
$$
\Ln=T\otimes\lnz \qaq \Linfty=T\otimes\qzl=\indlim {n} \Ln.
$$
We then consider the homology theories
\begin{equation*}\label{exHK1var}
H^{\et}(-,\Linfty) \qaq H^D(-,\Linfty).
\end{equation*}
For later use, we always assume that we are in either of the following cases:
\begin{itemize}
\item[$(a)$]
$\ell\not=p:=\ch(K)$,
\item[$(b)$]
$K$ is finite and $T=\zp$ on which $G_K$ acts trivially.
\end{itemize}
\end{ex}
\bigskip

For the example \ref{exHK2},
there does not seem to be an evident way to compute the associated Kato
complex in general while we have the following description in case $K$ is
finitely generated over a prime field.
Let $\EX ab r$ be the $E^1$-term associated to the niveau spectral sequence
for the homology theory in \ref{exHK2}.

\begin{prop}\label{HDLE1}
Let the notation be as in \ref{exHK2} and $\EX a b 1$ be $E^1$-term of
the associated spectral sequence.
\noindent
\begin{itemize}
\item[(1)]
Assume $K$ is a finite field. Then we have
$$ \EX a {0} 1 \isom \underset{a\in X_a}{\bigoplus} H^{a+1}(x,\Lam(a))
\qfor X\in \cC.$$
\item[(2)]
Assume $K$ is a global field, namely a number field or a function field in
one variable over a finite field.
Let $\PK$ be the set of the places of $K$ and $K_v$, for $v\in \PK$,
the henselization of $K$ at $v$. Consider the homology theory in \ref{exHK2}.
For a scheme $Z$ over $K$ write
$Z_v=Z\times_{\Spec(K)}\Spec(K_v)$.
Then we have
$$ \EX a {0} 1 \isom \underset{x\in X_a}{\bigoplus} C_x
\qfor X\in \cC,$$
where $C(x)$ ($x\in X_a$) is the cokernel of the diagonal map
$$
H^{a+2}(x,\Lam(a+1)) \to
\underset{v\in \PK}{\bigoplus} H^{a+2}(x_v,\Lam(a+1)).
$$
\item[(3)]
Assume $K$ is the function field of $S$, which is a connected regular proper
flat scheme of relative dimension one over $\Spec(\bZ)$.
We assume for simplicity either that $\Lam$ is annihilated by an odd integer
$n$ or that there is no $\bR$-valued point in $S$.
For $s\in S$ let $A_s$ be the henselization of $\cO_{S,s}$ and
$K_s$ be its field of fractions.
For $\fm\in S_0$ $A_\fm$ is a henselian regular ring of Krull dimension two
and we let $\Pm$ be the set of prime ideals of height one in $A_\fm$.
Let $A_\fp$ for $\fp\in\Pm$ be the henselization of $A_\fm$ at $\fp$ and
$K_\fp$ be its field of fractions.
For a scheme $Z$ over $K$ and for $s\in S$ (resp. $\fp\in \Pm$) write
$Z_s=Z\times_{\Spec(K)}\Spec(K_s)$
(resp. $Z_\fp=Z\times_{\Spec(K)}\Spec(K_\fp)$).
Then we have
$$ \EX a {0} 1 \isom \underset{x\in X_a}{\bigoplus} C_x
\qfor X\in \cC,$$
where $C_x$ ($x\in X_a$) is the cokernel of the diagonal map
$$
\underset{\fm \in S_0}{\bigoplus} H^{a+3}(x_{\fm},\Lam(a+2)) \oplus
\underset{\lambda \in S_1}{\bigoplus} H^{a+3}(x_{\lambda},\Lam(a+2))
\to
\underset{\fm \in S_0}{\bigoplus} \underset{\fp \in \Pm}{\bigoplus}
H^{a+3}(x_{\fp},\Lam(a+2)) .
$$
\end{itemize}
\end{prop}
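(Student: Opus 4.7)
All three parts follow a common template. Starting from the definition,
\[
E^1_{a,0}(X) = \bigoplus_{x \in X_a} H^D_a(x) = \bigoplus_{x \in X_a} \varinjlim_{V} \Hom\bigl(H^a_c(V,\Lam^\vee),\qz\bigr),
\]
with the limit running over dense open subschemes $V\subseteq \overline{\{x\}}$. Fix $x \in X_a$; after shrinking $V$ we may assume $V$ is smooth of dimension $a$ over $K$ with function field $\kappa(x)$. The plan is to apply an arithmetic duality theorem suited to the base $B=\Spec(K)$ to rewrite $H^a_c(V,\Lam^\vee)^\vee$ as a piece of ordinary \'etale cohomology of $V$ with a Tate twist (possibly modulated by local contributions), then pass to the limit and use the cohomological dimension of $\kappa(x)$ to eliminate boundary terms.

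For (1), invoke Artin--Verdier duality for $V$ smooth of dimension $a$ over a finite field $K$, which yields a perfect pairing
\[
H^i_c(V,\Lam^\vee) \times H^{2a+1-i}(V,\Lam(a)) \longrightarrow \qz.
\]
Setting $i=a$ and passing to the limit over $V$ gives $H^D_a(x) \cong H^{a+1}(\kappa(x),\Lam(a))$. Equivalently one may exploit the identification $H^D(-,\Lam) \cong H^{\et}(-,\Lam)[1]$ recorded in Example \ref{exHK2}, together with the explicit $E^1$-computation carried out in Example \ref{exHK1}.

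For (2), extend $V$ to a regular flat model over $\Spec(\cO_K)$ and apply global Artin--Verdier / Poitou--Tate duality. This produces a long exact sequence
\[
\cdots \to H^{i}(V,\Lam(a+1)) \to \bigoplus_{v\in P_K} H^{i}(V_v,\Lam(a+1)) \to H^{2a+2-i}_c(V,\Lam^\vee)^\vee \to H^{i+1}(V,\Lam(a+1)) \to \cdots,
\]
identifying the dual of compactly supported cohomology with (essentially) the cone of restriction to places. Specialize to $i=a+2$ and pass to the limit over $V$: because $\cd(K)=2$ under the running hypothesis, one has $\cd(\kappa(x))=a+2$, so $H^{a+3}(\kappa(x),\Lam(a+1))=0$ and $H^D_a(x)$ is precisely the cokernel $C_x$ of the statement.

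For (3), the same programme applies, but using a two-dimensional arithmetic Artin--Verdier-type duality on a regular proper flat model of $V$ over $S$. The local contributions now emanate from both the closed points $\fm \in S_0$ (with local rings $A_\fm$ of Krull dimension two) and the curves $\lambda \in S_1$ (with henselian DVRs $A_\lambda$), plus a further layer of localization at the height-one primes $\fp \in \Pm$ of each $A_\fm$; this is the origin of the third sum in the statement. Passing to the limit and invoking $\cd(\kappa(x))=a+3$ (so that $H^{a+4}(\kappa(x),\Lam(a+2))=0$) then identifies $H^D_a(x)$ with the displayed cokernel. The hard part will be the two-dimensional duality itself: cases (1) and (2) reduce to classical Artin--Verdier and Poitou--Tate, but case (3) genuinely requires either an appeal to higher-dimensional arithmetic duality of Saito type, or a careful hand-built assembly from the classical inputs through a Leray-type spectral sequence on $S$, with particular care for $p$-primary torsion when $p$ is a residue characteristic of $S$.
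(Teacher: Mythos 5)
Your plan tracks the paper's strategy at the level of architecture: reduce to a single point $x$, shrink to a smooth affine $V$ of dimension $a$ over $K$, dualize $H^a_c(V,\Lam^\vee)$ via an arithmetic duality theorem adapted to $B=\Spec(K)$, then pass to the limit over $V$ and use cohomological dimension / affine Lefschetz to kill the unwanted boundary term. Part (1) is complete and coincides with the paper's (Poincar\'e duality plus Tate duality over the finite field, or equivalently the shift $H^D(-,\Lam)\cong H^{\et}(-,\Lam)[1]$ recorded after Example \ref{exHK2}).

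For (2) and (3), however, there is a genuine gap. You explicitly leave the two-dimensional duality as ``the hard part'' and offer only a gesture toward ``higher-dimensional arithmetic duality of Saito type or a careful hand-built assembly.'' The paper fills exactly this gap: it spreads $V$ out to a smooth affine morphism $f:\cX\to U$ over a dense affine open $U\subset S$ on which $n$ is invertible, sets $L=Rf_!\Lam^\vee\in D^b_c(U,\nz)$ and $D_U(L)=R\Hom(L,\nz(2))$, and invokes (i) the duality theorem for constructible complexes on the arithmetic surface $S$ from [JSS], giving $H^i(S,j_!L)^\vee\simeq H^{5-i}(U,D_U(L))$ and the analogous statement with supports, and (ii) Gabber's affine Lefschetz theorem for arithmetic schemes (\cite{Fu}) to kill $H^{d+4}(\cX,\Lam(d+2))$ since $\cd(K)=3$. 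Without naming these two inputs, the identification of $H^D_a(x)$ with the stated cokernel in (3) is not established.

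A further, more local concern: the long exact sequence you propose for (2),
\[
\cdots \to H^{i}(V,\Lam(a+1)) \to \bigoplus_{v\in \PK} H^{i}(V_v,\Lam(a+1)) \to H^{2a+2-i}_c(V,\Lam^\vee)^\vee \to \cdots,
\]
is asserted without derivation, and it is not of the shape the paper actually obtains. The paper first dualizes the localization sequence $\cdots\to H^i_{\Sigma_V}(S,j_!L)\to H^i(S,j_!L)\to H^i_c(X,\Lam^\vee)\to\cdots$ via [JSS] and then must carry out a nontrivial local computation (the ``Claim'' inside the proof of \ref{HDLE2}) to identify $\projlim_V H^{4-d}(\Sigma_V,i_V^*Rj_*D_U(L))$ with the displayed cokernel over $S_0$, $S_1$, $\Pm$; a similar though simpler analysis is needed for the one-dimensional base in case (2). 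Your sequence asserts the end result of that analysis as if it were a known Poitou--Tate-style exact triangle, which it is not off the shelf. So: same approach, but with the central technical step (the duality on the arithmetic base plus the boundary identification) left unproved, precisely where the paper's proof does its real work.
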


Note that it is not evident that the image of the above diagonal maps
lies in the direct sum. It is easy but tedious to extend the above result
to the case where $K$ is a general finitely generated field over a prime field
but we do not pursue it in this paper (see for example \cite{KS2}).
\medbreak

Recall that $\Lam$ is a finite $G_K$-module annihilated by an integer $n>1$.
We denote $M^\vee= \Hom(M,\qz)$ for a $\nz$-module $M$.
By definition, \ref{HDLE1} follows from the following:

\begin{prop}\label{HDLE2}
Let $X$ be a connected smooth affine scheme of dimension $d$ over $K$.
\begin{itemize}
\item[(1)]
If $K$ is finite, there is a canonical isomorphism
$$
H^d_c(X,\Lam^\vee)^\vee \simeq H^{d+1}(X,\Lam(d)).
$$
\item[(2)]
If $K$ is a global field, there is a canonical isomorphism
$$
H^{d}_c(X,\Lam^\vee)^\vee\simeq \Coker\big(H^{d+2}(X,\Lam(d+1)) \to
\underset{v\in \PK}{\bigoplus} H^{d+2}(X_v,\Lam(d+1))\big).
$$
\item[(3)]
Let $K$ be as in \ref{HDLE1}(3). There is a canonical isomorphism
$H^{d}_c(X,\Lam^\vee)^\vee \simeq C_X$,
where $C_X$ is the cokernel of the diagonal map
$$
\underset{\fm \in S_0}{\bigoplus} H^{d+3}(X_{\fm},\Lam(d+2)) \oplus
\underset{\lambda \in S_1}{\bigoplus} H^{d+3}(X_{\lambda},\Lam(d+2))
\to
\underset{\fm \in S_0}{\bigoplus} \underset{\fp \in \Pm}{\bigoplus}
H^{d+3}(X_{\fp},\Lam(d+2)).
$$
\end{itemize}
\end{prop}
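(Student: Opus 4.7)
\medbreak\noindent\textit{Proof proposal.}
The plan is to combine Poincar\'e duality on $X_{\ol K}$, which trades the compactly supported cohomology of $\Lambda^\vee$ for ordinary cohomology of $\Lambda(d)$, with arithmetic duality on the base $\Spec(K)$. The three cases follow a single template and correspond to the three arithmetic dualities available for $K$: Tate duality over a finite field, classical Poitou--Tate over a global field, and a two-dimensional analogue for the function field of an arithmetic surface.

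Write $f\,:\,X\to \Spec(K)$ and set $M:=H^d_c(X_{\ol K},\Lambda^\vee)$. Because $X_{\ol K}$ is smooth affine of dimension $d$, Poincar\'e duality combined with Artin's affine Lefschetz theorem gives $H^i_c(X_{\ol K},\Lambda^\vee)=0$ for $i<d$ and $H^j(X_{\ol K},\Lambda(d+r))=0$ for $j>d$ and any $r\geq 0$. The Leray spectral sequence $H^p(K,R^q f_!\Lambda^\vee)\Rightarrow H^{p+q}_c(X,\Lambda^\vee)$ then degenerates in total degree $d$, yielding $H^d_c(X,\Lambda^\vee)\cong M^{G_K}=H^0(K,M)$, while Poincar\'e duality over $\ol K$ identifies the Pontryagin dual $M^\vee=\Hom(M,\qz)$ canonically with $H^d(X_{\ol K},\Lambda(d))$ as a $G_K$-module. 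The problem is thus reduced to computing $H^0(K,M)^\vee$ arithmetically and re-expressing the answer as cohomology of $X$ (or its various localisations) via Leray applied to $\Lambda(d+r)$ for the appropriate twist $r$.

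For case (1), Tate duality over the finite field $K$ gives $H^0(K,M)^\vee\cong H^1(K,M^\vee)$, and Leray with $\cd(K)=1$ plus the affine vanishing above leaves only the term $E_2^{1,d}$ in total degree $d+1$, identifying $H^1(K,H^d(X_{\ol K},\Lambda(d)))$ with $H^{d+1}(X,\Lambda(d))$. This proves (1). For case (2), classical Poitou--Tate duality (valid under the standing hypothesis that $\Lambda$ is odd-torsion or $K$ is totally imaginary) gives, for any finite Galois module $N$ of order coprime to the residue characteristics of the bad places, an exact sequence
\[H^2(K,N) \lra \bigoplus_{v\in \PK} H^2(K_v,N) \lra H^0(K,N^\ast)^\vee \lra 0,\]
where $N^\ast=\Hom(N,\mu_n)$ when $N$ is $n$-torsion. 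Applied to $N=H^d(X_{\ol K},\Lambda(d+1))$, so that $N^\ast\cong M$, and combined with Leray and affine Lefschetz for $X$ and each $X_v$ (all sharing the same geometric fibre $X_{\ol K}$), this rewrites the source as $H^{d+2}(X,\Lambda(d+1))\to\bigoplus_v H^{d+2}(X_v,\Lambda(d+1))$, proving (2).

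Case (3) is parallel, the only new input being a two-dimensional analogue of Poitou--Tate for the function field $K$ of the regular proper arithmetic surface $S$. Explicitly, one needs an exact sequence
\[\bigoplus_{\fm\in S_0} H^3(K_\fm,N) \oplus \bigoplus_{\lambda\in S_1} H^3(K_\lambda,N) \lra \bigoplus_{\fm\in S_0}\bigoplus_{\fp\in \Pm} H^3(K_\fp,N) \lra H^0(K,N^\ast)^\vee \lra 0,\]
for $N$ finite of order coprime to the residue characteristics (available here thanks to the assumption on $2$-torsion). Such a sequence is the natural higher analogue of Poitou--Tate, obtained by combining Artin--Verdier duality on $S$ with the localisation sequence around each closed point $\fm\in S_0$, whose local contributions range over $\Pm$. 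Applying it to $N=H^d(X_{\ol K},\Lambda(d+2))\cong M^\vee(2)$ and invoking Leray for the smooth affine base-changes $X_\fm$, $X_\lambda$, $X_\fp$ yields the stated cokernel description. The main obstacle is this last duality: unlike the two previous cases it is not classical and requires either a direct Artin--Verdier argument for two-dimensional arithmetic schemes or an appeal to the two-dimensional class field theory of Kato--Saito; once it is granted, the three cases become completely parallel.
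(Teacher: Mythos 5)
Your proposal takes a genuinely different route from the paper's. Where the paper spreads $X$ out to a smooth affine model $\cX\to U$ over a dense affine open $U\subset S$, works with the constructible complex $L=Rf_!\Lambda^\vee$ on $U$, and invokes the duality theorem for constructible sheaves on the arithmetic scheme $S$ from [JSS] together with localization along $\Sigma=S-U$ and Gabber's affine Lefschetz theorem, you instead reduce everything to Galois cohomology of $K$ via Hochschild--Serre. The affine vanishing $H^q_c(X_{\ol K},\Lambda^\vee)=0$ for $q<d$ and $H^q(X_{\ol K},\Lambda(\ast))=0$ for $q>d$, together with $\cd(K)\le 3$, does isolate $H^0(K,M)$ and $H^{\cd(K)}(K,M^\vee(\cdot))$ as the only surviving terms in the relevant total degrees, so your reduction is correct. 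This packaging is cleaner and more modular: case (1) becomes $\hat\bZ$-duality, case (2) the tail of the Poitou--Tate nine-term sequence, and the twists $N^*=\Hom(N,\mu_n)$, resp.\ $\Hom(N,\mu_n^{\otimes 2})$, match $M^\vee\cong H^d(X_{\ol K},\Lambda(d))$. What the paper's approach buys in exchange is that it never has to state or prove a Galois-cohomological duality over $K$ itself; all three cases are instances of a single sheaf-theoretic duality statement on $S$, followed by routine localization, and (3) costs no more than (2).

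The real issue is case (3), and you have identified it yourself: the ``two-dimensional Poitou--Tate'' sequence
\[
\bigoplus_{\fm\in S_0}H^3(K_\fm,N)\oplus\bigoplus_{\lambda\in S_1}H^3(K_\lambda,N)\lra\bigoplus_{\fm\in S_0}\bigoplus_{\fp\in\Pm}H^3(K_\fp,N)\lra H^0\bigl(K,\Hom(N,\mu_n^{\otimes 2})\bigr)^\vee\lra 0
\]
is the whole content of the proposition, and you assert it rather than prove it. The paper does not take it as given either; it derives the statement from the Artin--Verdier-type duality for constructible sheaves on $S$ (the reference [JSS] is the authors' own companion paper) and a localization computation around each $\fm\in\Sigma_0$ — this is precisely the ``direct Artin--Verdier argument for two-dimensional arithmetic schemes'' you mention as one of two possible sources. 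But it carries this out at the level of the complex $L=Rf_!\Lambda^\vee$ rather than for the single Galois module $N=H^d(X_{\ol K},\Lambda(d+2))$, and the exact sequence you want falls out only after identifying $D_U(L)$ with $Rf_*\Lambda(d+2)[2d]$ and truncating by affine Lefschetz. So your outline is correct in spirit and in target, but to make it a proof you would have to either (i) actually extract the displayed sequence for arbitrary finite $N$ from [JSS] plus localization, or (ii) cite a published form of it; as written, the hard step is named but not done.
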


\begin{proof}
\ref{HDLE2}(1) follows from the Poincar\'e duality for \'etale cohomology
and the Tate duality for Galois cohomology of finite fields.
As for \ref{HDLE2}(2) and (3), we only give the proof of the latter.
The proof of the former is similar and easier.
By SGA$4\frac{1}{2}$, Th. finitude, one can take a dense affine open subscheme
$j:U \hookrightarrow S$ with $n$ invertible on $S$,
and a smooth affine morphism $f:\cX\to U$ such that
$\cX\times_U\eta\simeq X$ and $Rf_*\Lam$ is constructible and commutes with
any base change of $U$. Write $\Sigma_U=S-U$.
For any dense open subscheme $V\subset U$, write
$i_V : \Sigma_V=S-V\hookrightarrow S$.
Let $L=Rf_!\Lam^\vee$ which is an object of $D^b_c(U,\nz)$, the derived
category of bounded complexes of $\nz$-modules whose cohomology sheaves are
constructible. Let $\eta=\Spec(K)$ be the generic point of $S$. Noting
$H^i(\eta,L)\simeq H^i_c(X,\Lam^\vee)$, the localization sequence for \'etale
cohomology provides a long exact sequence
$$
\cdots\to \indlim V H^i_{\Sigma_V}(S,j_! L) \to H^i(S,j_! L) \to
H^i_c(X,\Lam^\vee) \to \indlim V H^{i+1}_{\Sigma_V}(S,j_! L) \to \cdots
$$
Set
$D_U(L)=R\Hom_{D^b(U)}(L,\nz(2))\in D^b_c(U,\nz)$.
By the Poincar\'e duality for the smooth morphism $f$, we have
$Rf^!\nz(2)=\nz(d+2)$ and
$$
D_U(L)\simeq Rf_* R\Hom_{D^b(\cX)}(\Lam^\vee,\nz(d+2)) \simeq
Rf_* \Lam(d+2)[2d]
\; \in D^b_c(U,\nz).$$
By the duality theorem for constructible sheaves on $S$ (\cite{JSS}),
we have canonical isomorphisms
$$
H^i(S,j_!L)^\vee \simeq H^{5-i}(U,D_U(L)) \simeq H^{2d+5-i}(\cX,\Lam(d+2)),
$$
$$
H^i_{\Sigma_V}(S,j_!L)^\vee \simeq H^{5-i}(\Sigma_V, i_V^* Rj_*D_U(L)).
$$
Recalling that $\cX$ is an affine scheme of finite type over $\Spec(\bZ[1/n])$
with $\dim(\cX)=d+2$, $H^{t}(\cX,\Lam(d+2))=0$ for $t>d+4$ by the affine
Lefschetz theorem for arithmetic schemes due to Gabber (cf. \cite{Fu}, \S5).
Therefore we get the exact sequence
$$
H^{d+4}(\cX,\Lam(d+2))\to \projlim V H^{4-d}(\Sigma_V, i_V^* Rj_*D_U(L))
\to H^d_c(X,\Lam^\vee)^\vee\to 0.
$$

\begin{claim}
Writing $\Sigma=\Sigma_U=S-U$, we have a canonical isomorphism
$$
H^{4-d}(\Sigma_V, i_V^* Rj_*D_U(L))\simeq C_{\cX/U}.
$$
Here $C_{\cX/U}$ is the cokernel of the diagonal map
$$
\underset{\fm \in \Sigma_0}{\bigoplus} H^{d+3}(\cX_{\fm},\Lam(d+2)) \oplus
\underset{\lambda \in \Sigma_1}{\bigoplus} H^{d+3}(X_{\lambda},\Lam(d+2))
\to
\underset{\fm \in \Sigma_0}{\bigoplus}
\underset{\fp \in P_{\fm,\Sigma}}{\bigoplus} H^{d+3}(X_{\fp},\Lam(d+2)),
$$
where
$\cX_\fm=\cX\times_S \Spec(A_\fm)$ and $P_{\fm,\Sigma}$ is the subset of
$P_{\fm}$ of those $\fp$ lying over $\Sigma$.
In particular $H^{4-d}(\Sigma_V, i_V^* Rj_*D_U(L))$ is independent of $V$.
\end{claim}
\medbreak

By the claim we get the exact sequence
$$
H^{d+4}(\cX,\Lam(d+2))\to C_{\cX/U} \to H^d_c(X,\Lam^\vee)^\vee\to 0.
$$
Noting $cd(K)=3$, we have $H^{d+4}(X,\Lam(d+2))=0$ by the affine Lefschetz
theorem. Thus \ref{HDLE2}(3) follows by shrinking $U$ to $\eta$.
\end{proof}

\medbreak\noindent
\it Proof of the claim. \rm
Let $\cF=i_V^* Rj_*D_U(L)\in D^b_c(\Sigma_V,\nz)$.
By the localization theory we have the long exact sequence
\begin{multline*}
\underset{\lambda \in (\Sigma_V)_1}{\bigoplus}
H^{t-1}(\Spec(A_\lam),Rj_*D_U(L)) \to
\underset{\fm \in (\Sigma_V)_0}{\bigoplus} H^{t}_\fm(\Sigma_V,\cF) \to
H^t(\Sigma_V,\cF) \\
\to \underset{\lambda \in (\Sigma_V)_1}{\bigoplus}
H^{t}(\Spec(A_\lam),Rj_*D_U(L)).
\end{multline*}
Writing $Y_\lam=\cX\times _U \lam$ for $\lam\in U$, we have
\begin{align*}
H^{t}(\Spec(A_\lam),Rj_*D_U(L))
&\simeq \begin{cases}
      H^{t}(\Spec(K_\lam),D_U(L)) &(\lam\in \Sigma),\\
      H^{t}(\Spec(A_\lam),D_U(L)) &(\lam\in U),
\end{cases} \\
&\simeq \begin{cases}
      H^{2d+t}(X_\lam,\Lam(d+2)) &(\lam\in \Sigma),\\
      H^{2d+t}(Y_\lam,\Lam(d+2)) &(\lam\in U),
\end{cases}
\end{align*}
where we have used the base change property of $R f_*\Lam(d+2)$.
Noting that $\cd(K_\lam)=3$ and $\cd(\kappa(\lam))=2$, the affine Lefschetz
theorem implies
$H^{t}(\Spec(A_\lam),Rj_*D_U(L))=0$ for $t\geq 4-d$ and
a canonical isomorphism
$$
H^{3-d}(\Spec(A_\lam),Rj_*D_U(L)) \simeq
\begin{cases}
      H^{d+3}(X_\lam,\Lam(d+2)) &(\lam\in \Sigma),\\
      0 &(\lam\in U).
\end{cases}
$$
Hence the claim is reduced to establishing a canonical isomorphism, for
$\fm\in (\Sigma_V)_0$:
\begin{equation}\label{HDLEproof2}
H^{4-d}_\fm(\Sigma_V,\cF) \simeq \Coker\big(
H^{d+3}(\cX_{\fm},\Lam(d+2)) \to \underset{\fp \in P_{\fm,\Sigma}}{\bigoplus}
H^{d+3}(X_{\fp},\Lam(d+2))\big).
\end{equation}
For this we use the localization sequence
\begin{multline*}
H^{t-1}(\Spec(A_\fm),Rj_*D_U(L)) \to
\underset{\fp \in P_{\fm,\Sigma_V}}{\bigoplus}
H^{t}(\Spec(A_\fp),Rj_*D_U(L)) \to H^{t}_\fm(\Sigma_V,\cF) \\
\to H^{t}(\Spec(A_\fm),Rj_*D_U(L)).
\end{multline*}
By the same argument as before we get
$$
H^{3-d}(\Spec(A_\fp),Rj_*D_U(L)) \simeq
\begin{cases}
      H^{d+3}(X_\fp,\Lam(d+2)) &(\fp\in P_{\fm,\Sigma}),\\
      0 &(\fp\not\in P_{\fm,\Sigma}).
\end{cases}
$$
In case $\fm\in \Sigma$, we have
\begin{align*}
H^{t}(\Spec(A_\fm),Rj_*D_U(L)) =&
H^{t}(\Spec(A_\fm)\times_S U,Rf_*\Lam(d+2)[2d])\\
 = &H^{2d+t}(\cX\times_S \Spec(A_\fm),\Lam(d+2)).
\end{align*}
In case $\fm\in U$, writing $Y_\fm = \cX \times_U \fm$, we have
$$
H^{t}(\Spec(A_\fm),Rj_*D_U(L)) = H^{t}(\Spec(A_\fm),Rf_*\Lam(d+2)[2d]) =
H^{2d+t}(Y_\fm,\Lam(d+2))
$$
by the base change property of $Rf_*\Lam(d+2)$ and it vanishes
for $t\geq 2-d$ by the affine Lefschetz theorem.
This shows the desired isomorphism \eqref{HDLEproof2} and completes
the proof of the claim.

\bigskip

\section{Statements of the Main Theorems}
\bigskip

Let the notations and assumption be as in the previous section and fix
a homology theory $H$ leveled above $e$ with coefficient module $\Lam_H$.
Recall that $\cS$ is the category of smooth projective schemes over
$B = \Spec(k)$ where $k$ is a field.

\begin{defn}\label{deflogpair}
\noindent
\begin{itemize}
\item[(1)]
A log-pair is a couple $\Phi=(X,Y)$ where $X\in Ob(\cS)$ is connected and
$Y\subset X$ is a simple normal crossing divisor. We call $U=X-Y$ the
complement of $\Phi$ and denote sometime $\Phi=(X,Y;U)$.
A log-pair $\Phi=(X,Y)$ is ample if one of the irreducible components of $Y$
is an ample divisor on $X$.
\item[(2)]
Let  $\Phi=(X,Y;U)$ and $\Phi'=(X',Y';U')$ be two log-pairs.
A map of log pairs $\pi:\Phi' \to\Phi$ is a proper morphism $\pi: X'\to X$
such that $\pi(Y')\subset Y$. It is admissible if $\pi$ induces
an isomorphism $U'=\pi^{-1}(U) \isom U$.
\item[(3)]
Let $\Phi=(X,Y)$ be a log pair and let $Y_1,\dots, Y_N$ be the irreducible
components of $Y$. For an integer $a\geq 1$ write
$$ Y^{(a)} =\underset{1\leq i_i< \cdots< i_a\leq N}{\coprod} Y_{i_1,\dots,i_a}
\quad (Y_{i_1,\dots,i_a} = Y_{i_1}\cap\cdots\cap Y_{i_a}).$$
For $1\leq\nu\leq a$ let
$$ \delta_\nu : Y^{(a)} \to Y^{(a-1)}$$
be induced by the inclusions
$Y_{i_1,\dots,i_a}\hookrightarrow Y_{i_1,\dots,\hat{i_\nu},\dots,i_a}$ and we
define a chain complex in $\ZS$:
$$
\Phi_\bullet=(X,Y)_\bullet \;:\; Y^{(d)} \rmapo{\partial} Y^{(d-1)}
\rmapo{\partial} \cdots \rmapo{\partial} Y^{(1)}
\rmapo{\iota} X,
\quad (d=\dim(X))
$$
where
$$ \partial=\sum_{i=1}^a (-1)^\nu \delta_\nu \;:\; Y^{(a)} \to Y^{(a-1)}$$
and $\iota$ is induced by the inclusion $Y\hookrightarrow X$.
We denote by $Cor(\Phi_\bullet)$ the associated complex in $\CorS$.
A map of log pairs $\pi:\Phi' \to\Phi$ induces a map
$\pi_*:\Phi'_\bullet \to \Phi_\bullet$ of complexes in $\ZS$.
\end{itemize}
\end{defn}
\bigskip

For a log-pair $\Phi=(X,Y;U)$ it is easy to check that the natural
map of complexes
\begin{equation}\label{KCFU}
\KC {\Phi_\bullet} \isom \KC U
\end{equation}
is a quasi-isomorphism. Combined with the map of complexes
$\KC {\Phi_\bullet} \rightarrow \graph {X_\bullet}$ (cf. \eqref{graphhomX})
we get natural maps
\begin{equation}\label{graphhomPhi}
\graphhom {a}  {\Phi_\bullet} \;:\; \KH a U \to \graphH a {\Phi_\bullet},
\end{equation}
where the right hand side is the homology in degree in $a$ of $\graph {\Phi_\bullet}$.
Let
\begin{equation}\label{graphedgehomPhi}
\graphedge a \Phi \;:\; \Hempty {a-e} U \to \graphH a {\Phi_\bullet}
\end{equation}
be the composite of the above map with the edge homomorphism \eqref{edgehom}.

\begin{defn}\label{def.clean1}
A log pair $\Phi=(X,Y;U)$ is $H$-clean in degree $q$ for an integer $q$
if $q\leq \dim(X)$ and $\graphedge a \Phi$ is injective for $a=q$ and
surjective for $a=q+1$.
\end{defn}

We now consider the following condition (called the Lefschetz condition)
for our homology theory $H$:
\begin{enumerate}
\item[{\bf (L)} ]  : Every ample log pair is $H$-clean in degree $q$ for
all $q \leq \dim(X)$.
\end{enumerate}

\begin{lem}\label{lemL1}
A homology theory $H$ leveled above $e$
satisfies the Lefschetz condition, if the following conditions holds:
\begin{itemize}
\item[$(H1)$]
For $f:X\to B=\Spec(k)$, smooth projective of dimension $\leq 1$
with $X$ connected (but not necessarily geometrically irreducible over $B$),
$f_*\;:\; \Hempty {-e} X \to \Hempty {-e} B=\Lam_H$
is an isomorphism if $\dim(X)=0$ and injective if $\dim(X)=1$.
\item[$(H2)$]
For $X$, projective smooth of dimension $>1$ over $B$, and  $Y\subset X$,
an irreducible smooth ample divisor, and $U=X-Y$, one has
$$ \Hempty {a-e} {U} =0\qfor a\leq d=\dim(X).$$
\item[$(H3)$]
For a projective smooth curve $X$ over $B$ and for a dense affine open
subset $U\subset X$,
$$ \Hempty {a-e} {U} =0\qfor a\leq 0$$
and $\Hempty {1-e} U @>{\partial}>> \Hempty {-e} Y$ is injective, where
$Y=X-U$ with the reduced subscheme structure.
\end{itemize}
\end{lem}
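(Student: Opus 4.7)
The plan is to prove the lemma by double induction on $d=\dim(X)$ and on $N$, the number of irreducible components of the divisor $Y$. For the base case $d=1$, $X$ is a smooth projective curve and $Y$ is a nonempty finite set of closed points, each of which is automatically ample. The graph complex of $\Phi_\bullet$ is the two-term complex $\Lambda_H^{N}\to \Lambda_H$, $(a_i)\mapsto \sum_i a_i$, whose homology is $\graphHempty{0}{\Phi_\bullet}=0$ and $\graphHempty{1}{\Phi_\bullet}=\ker(\Lambda_H^N\to\Lambda_H)$. Cleanness in degree $0$ amounts to $\Hempty{-e}{U}=0$, which is part of $(H3)$. For cleanness in degree $1$ I identify $\graphedge{1}{\Phi}$ with the boundary $\partial\colon\Hempty{1-e}{U}\to \Hempty{-e}{Y}$ in the localization sequence for $(X,Y)$; $(H3)$ gives injectivity of $\partial$, while $(H1)$ gives $\Hempty{-e}{X}\hookrightarrow\Lambda_H$, so $\mathrm{Im}(\partial)=\ker(\Hempty{-e}{Y}\to\Hempty{-e}{X})=\ker(\Lambda_H^N\to\Lambda_H)$, and $\graphedge{1}{\Phi}$ is an isomorphism.

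For the inductive step with $d\geq 2$, first dispose of $N=1$: if $\Phi=(X,Y_1)$ with $Y_1$ irreducible smooth and ample, then the graph complex is $\Lambda_H\xrightarrow{\mathrm{id}}\Lambda_H$, which is acyclic, so $\graphHempty{a}{\Phi_\bullet}=0$ for all $a$; combined with $\Hempty{a-e}{U}=0$ for $a\leq d$ given by $(H2)$, cleanness is automatic. For $N\geq 2$ let $Y_1$ be the ample component and peel off a different component, say $Y_N$; set $Y''=Y_1\cup\cdots\cup Y_{N-1}$, $\Phi''=(X,Y'')$ and $\Psi=(Y_N,\,Y_N\cap Y'')$. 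Then $\Phi''$ is ample (it still contains $Y_1$) with only $N-1$ components, and since the restriction of an ample divisor to a closed subvariety is ample, $Y_N\cap Y_1$ is an ample component of the boundary of $\Psi$, making $\Psi$ an ample log pair on $Y_N$ of dimension $d-1$. By the inductive hypothesis, both $\Phi''$ and $\Psi$ are $H$-clean in all required degrees.

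With $U=X-Y$, $W=X-Y''$ and $V=Y_N-Y_N\cap Y''$, the closed immersion $V\hookrightarrow W$ has open complement $U$, giving a localization long exact sequence $\cdots\to\Hempty{a-e}{V}\to\Hempty{a-e}{W}\to\Hempty{a-e}{U}\to\Hempty{a-e-1}{V}\to\cdots$. On the graph side, partitioning each $Y^{(\bullet)}$ according to whether its multi-index contains $N$ realises the strata not containing $N$ as the subcomplex $\Phi''_\bullet\subset\Phi_\bullet$ and those containing $N$ as the quotient, which one identifies with $\Psi_\bullet[1]$ in $\ZS$. The resulting short exact sequence $0\to\Phi''_\bullet\to\Phi_\bullet\to\Psi_\bullet[1]\to 0$ yields a long exact sequence $\cdots\to\graphHempty{a}{\Phi''_\bullet}\to\graphHempty{a}{\Phi_\bullet}\to\graphHempty{a-1}{\Psi_\bullet}\to\graphHempty{a-1}{\Phi''_\bullet}\to\cdots$. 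The edge maps $\graphedge{\bullet}{\cdot}$ for $\Phi,\Phi'',\Psi$ give a morphism between the two long exact sequences, after which a standard five-lemma argument delivers cleanness of $\Phi$ in all degrees $\leq d$, noting that surjectivity at $a=d+1$ is automatic since $\graphHempty{d+1}{\Phi_\bullet}=0$. The main obstacle is verifying this morphism of long exact sequences: one must check that the $\graphedge{\bullet}{\cdot}$ intertwine the localization boundary on the homology side with the connecting map of the graph SES, which is a naturality calculation in the Bloch-Ogus spectral sequence \eqref{spectralsequence1} and in the graph homomorphism \eqref{graphhomX}, using the quasi-isomorphism $\KC{\Phi_\bullet}\simeq\KC{U}$.
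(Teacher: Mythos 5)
Your proof is correct and follows essentially the same approach as the paper's: a double induction on $\dim X$ and the number $N$ of components, peeling off a component $Y_N$ and using the cone decomposition $\Phi_\bullet\simeq\mathrm{Cone}(\Psi_\bullet\to\Phi''_\bullet)$ from Lemma \ref{lemPhi2}, then comparing the two long exact sequences and running the five lemma, with surjectivity at $a=d+1$ automatic for dimension reasons. Your base case spells out the identification of $\graphedge{1}{\Phi}$ with $\partial$ a bit more explicitly, and you flag the commutativity of the comparison diagram as the remaining technical point, which the paper also leaves implicit.
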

\medbreak

\begin{lem}\label{lemL2}
Consider the homology theories $H^{\et}(-,\Linfty)$ and $H^D(-,\Linfty)$
in \ref{exHK3}. In the case $(a)$ of \ref{exHK3}, assume the following:
\begin{itemize}
\item[$(i)$]
$K$ is finitely generated over a prime field and
$T$ is mixed of weights $\leq 0$ (\cite{D}),
\item[$(ii)$]
$T^{G_K} = T^{G_L}$ for any finite separable extension $L/K$ with
$G_L=\Gal(\overline{K}/L) \subset G_K$.
\end{itemize}
Then they satisfy the conditions in
\ref{lemL1} and hence the Lefschetz condition.
\end{lem}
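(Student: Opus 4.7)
The plan is to translate each of $(H1)$, $(H2)$, $(H3)$ into a concrete \'etale-cohomology assertion via Poincar\'e--Lefschetz duality, and then deduce it from Artin's affine Lefschetz theorem, the Hochschild--Serre spectral sequence for $U \to \Spec(K)$, and Deligne's weight bounds (in case $(a)$) or Tate duality on a finite field (in case $(b)$).

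I would first handle $H^{\et}(-,\Linfty)$. Recall it is leveled above $e = 1$ and that for a smooth scheme $X$ of pure dimension $d$, Poincar\'e duality gives $H_a(X) \cong H^{2d-a}_{\et}(X, \Linfty(d))$ by \eqref{exH1smooth}. Condition $(H2)$ then becomes $H^s_{\et}(U, \Linfty(d)) = 0$ for $s \geq d+1$, and the vanishing part of $(H3)$ a similar statement for affine curves; the injectivity part of $(H3)$, for $H^2(U, \Linfty(1)) \to \bigoplus_{y \in Y} H^1(\k(y), \Linfty)$, comes from the localization sequence on $X$ once one knows that the map $H^1(X, \Linfty(1)) \to H^1(U, \Linfty(1))$ is surjective. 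Artin's affine Lefschetz theorem provides the vanishing of $H^q(U_{\Kb}, \Linfty(d))$ for $q > d$; the surviving Galois contributions in the Hochschild--Serre spectral sequence are then killed by the weight hypothesis, since $\Linfty(d)$ is mixed of weights $\leq -2d$ and $H^q(U_{\Kb}, \Linfty(d))$ has strictly negative weights in the critical range, so that its Galois cohomology over $K$ vanishes in the degrees that matter by Deligne's theorem.

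For $(H1)$ with $\dim X = 0$, so $X = \Spec(k')$ for a finite separable extension $k'/K$, $f_*$ is the corestriction $H^1(k', \Linfty) \to H^1(K, \Linfty)$. The hypothesis $T^{G_K} = T^{G_{k'}}$ forces restriction to be an isomorphism onto a canonical direct summand of $H^1(k', \Linfty)$, and vanishing of the complementary summand (again by the weight hypothesis and the triviality of higher Hochschild--Serre differentials) makes corestriction the inverse isomorphism. For $\dim X = 1$, injectivity of $f_*$ on $H^1(X, \Linfty(1))$ reduces to showing that $H^1(X_{\Kb}, \Linfty(1))^{G_K} = 0$, which holds because this geometric cohomology is mixed of weights $\leq -1$.

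For $H^D(-,\Linfty)$ the verification is formally dual: the defining identity $H^D_a(U) = \Hom(H^a_c(U, \Linfty^\vee), \qz)$ converts $(H2)$ and $(H3)$ into vanishings of $H^s_c(U, \Linfty^\vee)$ in low degrees, which follow from the affine Lefschetz theorem for compactly supported cohomology and parallel weight considerations on $\Linfty^\vee$; $(H1)$ again reduces to the same Galois-cohomology computation controlled by $T^{G_K} = T^{G_L}$. I expect the main obstacle to be the fine weight bookkeeping in the boundary case of $(H2)$ and $(H3)$, together with the integral (rather than merely rational) form of $(H1)$: these are precisely the points at which the two hypotheses on $T$ in case $(a)$ become indispensable, and case $(b)$ is handled separately by explicit Frobenius computations on $\qzp$.
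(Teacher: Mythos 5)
Your overall toolkit (Artin's affine Lefschetz theorem, Hochschild--Serre, and Deligne's weight bounds) matches the paper's, and your sketch of $(H2)$ is essentially correct; the paper's formulation differs only in that it reduces at the outset to $H^D(-,\Linfty)$ via \ref{HDLE2}(1), works with compactly supported geometric cohomology $H^\bullet_c(\ol U,\Ln^\vee)$, and translates the target into a vanishing of coinvariants $H^d(\ol U,\Linfty(d))_{G_K}=0$, which it establishes via the geometric localization sequence for $(\ol X,\ol Z)$ and the weight bounds on $H^d(\ol X,T(d))$ and $H^{d-1}(\ol Z,T(d-1))$.

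However, your argument for $(H3)$ has a genuine gap. In the localization sequence
$$
\cdots\to H^1(X,\Linfty(1))\to H^1(U,\Linfty(1))\to H^0(Y,\Linfty)\to H^2(X,\Linfty(1))\to H^2(U,\Linfty(1))\xrightarrow{\partial} H^1(Y,\Linfty)\to\cdots
$$
injectivity of $\partial$ is equivalent to surjectivity of the Gysin map $H^0(Y,\Linfty)\to H^2(X,\Linfty(1))$, not to surjectivity of $H^1(X,\Linfty(1))\to H^1(U,\Linfty(1))$. The latter surjectivity only yields that $H^0(Y,\Linfty)\to H^2(X,\Linfty(1))$ is \emph{injective}, which is the wrong direction. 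The paper handles this point by passing to the geometric sequence
$$
0\to H^1(\ol X,\Linfty(1))\to H^1(\ol U,\Linfty(1))\to H^0(\ol Y,\Linfty)\to\Linfty\to 0,
$$
killing $H^1(\ol X,\Linfty(1))_{G_K}$ by weight reasons, and then showing that the resulting short exact sequence remains exact after taking $G_K$-coinvariants; the last step is not automatic and uses that the sequence splits as $G_L$-modules for some finite Galois $L/K$ together with divisibility of $\Linfty$. This extra input is what you are missing. A second, more minor, issue is that your $(H1)$ argument invokes the weight hypothesis and Hochschild--Serre differentials, but weights play no role there; $(H1)$ for $H^D$ amounts to the equality of invariants $(\Linfty^\vee)^{G_K}=(\Linfty^\vee)^{G_{k'}}$, which is precisely where condition $(ii)$ enters and nothing more. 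Finally, your treatment of case $(b)$ is only asserted; the paper reduces it to $H^{\et}(-,\qzp)$ and refers to the argument of \cite{JS1}, Theorem 3.5.
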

\medbreak
The proofs of \ref{lemL1} and \ref{lemL2} will be given in the last part of
this section.
\bigskip

We restate {$\bf (RES)_q$} in the introduction. Let $q\geq 0$ be an integer.

\begin{enumerate}
\item[{$\bf (RES)_q$}] :
For any log pair $(X,Y;U)$ and for any irreducible closed
subscheme $W\subset X$ of dimension $\leq q$ such that $W\cap U$ is regular,
there exists an admissible map of log-pairs $\pi: (X',Y') \to (X,Y)$ such that
the proper transform of $W$ in $X'$ is regular and intersects transversally
with $Y'$.
\end{enumerate}
\medbreak

$\bf (RES)_{q}$ holds if $\ch(F)=0$ by Hironaka's theorem.
It is shown in general for $q=2$ in \cite{CJS}.

\begin{thm}\label{mainthmI}
Let $H$ be a homology theory leveled above $e$ which satisfies {$\bf(L)$}.
Let $q\geq 1$ be an integer and assume {$\bf(RES)_{q-2}$}.
Then, for any log-pair $\Phi$, the map induced by \eqref{graphhomPhi}:
$$
\graphhom {a}  {\Phi_\bullet} \;:\; \KH  a U \to \graphH a
{\Phi_\bullet}.
$$
is an isomorphism for all $a\leq q$. In particular, if $X\in Ob(\cS)$
$$ \KH a X =0\qfor 0< a\leq q.$$
\end{thm}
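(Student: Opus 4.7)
The plan is to proceed by induction on $d = \dim(X)$, with the final vanishing $\KH{a}{X} = 0$ for $0 < a \leq q$ following from the isomorphism statement applied to the trivial log pair $(X, \emptyset; X)$, for which the graph complex $\graph{\Phi_\bullet}$ reduces to $\Lambda_H$ concentrated in degree zero. The base case $d \leq 1$ is handled directly by the Lefschetz condition \textbf{(L)}, unpacked via the hypotheses $(H1)$ and $(H3)$ of Lemma \ref{lemL1}, since in this dimension range $\Phi_\bullet$ and $\graph{\Phi_\bullet}$ are very short and directly computable.

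The heart of the inductive step is to reduce the isomorphism claim to the following vanishing for the log pair $\Phi = (X, Y; U)$ of dimension $d$:
\[
\ZBU{a}{b}{\infty}{b} \;=\; 0 \qforall a, b \text{ with } a + b \leq q \text{ and } b \geq 1.
\]
As explained in the introduction, this vanishing forces $E^{\infty}_{a,b}(U) = 0$ in the same range, so the spectral sequence \eqref{spectralsequence1} yields that the edge homomorphism $\Hempty{n-e}{U} \to \KH{n}{U}$ injects onto the permanent cycles for $n \leq q$. Enlarging $Y$ by a very ample component to land in the ample case, \textbf{(L)} then identifies the source and target of this edge map with $\graphH{n}{\Phi_\bullet}$, producing the desired isomorphism $\graphhom{n}{\Phi_\bullet}$.

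To prove the vanishing, take $\alpha \in \ZBU{a}{b}{\infty}{b}$ and let $W$ be the reduced Zariski closure of its support in $X$, of dimension $\leq a \leq q - 1$. By $\RES{q-2}$, I would first desingularize $W$ inside $X$, and then via a Bertini argument on a sufficiently high multiple of an ample line bundle choose a smooth hypersurface $Z \supset W$ meeting $Y$ transversally, so that $(X, Y + Z)$ is again a log pair. Setting $V = U \setminus Z$, the element $\alpha$ is killed in $\ZBV{a}{b}{\infty}{b}$ because its support lies in $Z$. Theorem \ref{FL} applied to $U$ with closed subscheme $Z \cap U$ and open complement $V$, for the appropriate parameter $e$, then shows that $\alpha$ lifts to an element of $\ZBZcapU{a}{b}{\infty}{b}$, which vanishes by the inductive hypothesis applied to a desingularization of $Z \cap U$ of dimension $d - 1$, forcing $\alpha = 0$.

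The main obstacle will be the precise index bookkeeping required to verify the hypotheses of Theorem \ref{FL}: the lemma demands vanishings of $\ZBV{p-a}{q+a}{\infty}{q+a+e}$ and of the analogous groups for the closed stratum over a specific range of $a$. Securing these will require iterating the hyperplane section construction on $V$ and invoking the inductive hypothesis at each intermediate dimension, while arranging, via repeated uses of $\RES{q-2}$, that the desingularization of $W$, the transversality of $Z$ with $Y$, and the regularity of the proper transforms can all be achieved simultaneously and leave the log-pair structure intact, so that the induction may be fed back into itself coherently.
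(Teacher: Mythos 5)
Your overall strategy is the right one, and you have correctly identified the key ingredients: reduce the isomorphism to a vanishing of $\ZBU{a}{b}{\infty}{b+e}$, exploit that the support of a class $\alpha$ in that group lies in a closed subset $W$ of small dimension, use $\RES{q-2}$ to make $W$ nice, remove a divisor containing $W$, and apply Theorem~\ref{FL} together with the inductive hypothesis. However, there is a genuine gap at the heart of the construction. You propose, after desingularizing $W$, to ``choose a smooth hypersurface $Z\supset W$ meeting $Y$ transversally.'' Such a $Z$ need not exist. Locally, a hypersurface containing a smooth $W$ corresponds to a section of $I_W\otimes L^m$, and its smoothness along $W$ is governed by the induced section of the rank-$c$ conormal bundle $N_W^*\otimes L^m$, $c = \mathrm{codim}(W)$: a general such section is nowhere-vanishing only if $c > \dim W$, i.e.\ $\dim W < \dim X/2$. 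In the present setting $\dim W \leq q-2$ and $\dim X \geq q$, so for example with $q = \dim X$ and $\dim W = q-2$ this inequality fails as soon as $q\geq 4$. The paper circumvents precisely this obstacle by first blowing up $X$ along (the resolved) $W$, turning $W$ into the exceptional divisor $E$ on $\tX$; one then chooses $N = \dim(X)-q+1$ generic hypersurface sections $H_1,\dots,H_N$ on $\tX$ transverse to $\tY\cup E$, so that all the intermediate intersections $\tZ_\nu=H_1\cap\dots\cap H_\nu$ are smooth upstairs even though their images $Z_\nu\subset X$, which do contain $W$ (since each $E\cap\tZ_\nu\to W$ is surjective), are allowed to be singular along $W$.

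A consequence of this is that a single application of Theorem~\ref{FL} cannot close the argument. The paper instead runs Theorem~\ref{FL} down the chain $U = U_0\supset U_1\supset\dots\supset U_N$, $U_\nu = Z_\nu\cap U$, first establishing by a descending diagram chase that every $\graphhom{i}{\Phi_\nu}$ has the needed injectivity/surjectivity in degrees $q-1$ and $q$ (which requires the ampleness and {\bf (L)} at each stage $\Psi_\nu$, and in particular Lemmas~\ref{lemPhi2} and \ref{lemPhi3} identifying the relevant cones of complexes in $\ZS$), then proving $\Th{l}{U_\nu}=0$ for $1\leq\nu\leq N$ by downward induction, and only then deducing the injectivity of $\Th{t}{U}\to\Th{t}{V_0}$. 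Your proposal delegates this to ``index bookkeeping,'' but the missing piece is not merely bookkeeping: one must first know that the subsidiary log pairs are $H$-clean in degree $q-1$, so that the inductive hypothesis (which is stated only for clean log pairs) applies to them, and this cleanness has to be propagated from the original $\Phi$ through the whole chain. Without the blowup and the resulting chain of log pairs, this propagation has no place to live. Two further minor points: the dimension of the support should be $\leq q-2$ rather than $\leq q-1$ (in the relevant range $a+b=q-1-e$, $b\geq 1-e$, one has $a\leq q-2$, matching the hypothesis $\RES{q-2}$); and once $Z$ is smooth and transverse to $Y$, there is no need to desingularize $Z\cap U$, which is already smooth.
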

\medbreak

The proof of \ref{mainthmI} will be completed in the next section.
\bigskip

We also consider a variant of the main theorem \ref{mainthmI},
where we replace ${\bf {(RES)_q}}$ by a condition {$\bf (RS)_d$} introduced below.
Let $d\geq 1$ be an integer and let $\cC_d\subset \cC$ be the full
subcategory of the schemes of dimension$\leq d$.

\begin{enumerate}
\item[{$\bf (RS)_d$}] :
For any $X\in Ob(\cC_d)$ integral and proper over $k$,
there exists a proper birational morphism
$\pi: X'\to X$ such that $X'$ is smooth over $k$.
For any $U\in Ob(\cC_d)$ smooth over $k$, there is an open immersion
$U\hookrightarrow X$ such that $X$ is projective smooth over $k$ with $X-U$,
a simple normal crossing divisor on $X$.
\end{enumerate}
\medbreak

For an additive category $\cA$ we denote by $\Hot(\cA)$ the category of
complexes in $\cA$ up to homotopy. It is triangulated by defining the triangles
to be the diagram isomorphic in $\Hot(\cA)$ to diagrams of the form:
$$ A_\bullet \rmapo{f} B_\bullet \to Cone(f) \to A_\bullet[1],$$
where $f$ is any morphism of complexes in $\cA$.
\medbreak

Now assume {$\bf(RS)_d$}.
Fix $X\in Ob(\cC_d)$ and take a compactification $j:X\to \ol X$, namely
$j$ is an open immersion and $\ol X\in \cC$ which is proper over $k$.
Let $i:Y=\ol X -X \to \ol X$ be the closed immersion for the complement.
By \cite{GS} 1.4, one can find a diagram
\begin{equation}\label{hyperenv}
\begin{CD}
Y_\bullet @>{i_\bullet}>> \ol X_\bullet \\
@V{\pi_Y}VV @VV{\pi_X}V \\
Y @>{i}>> \ol X \\
\end{CD}
\end{equation}
where $Y_\bullet$ and $\ol X_\bullet$ are simplicial objects in $\cS$ and
$\pi_X$ and $\pi_Y$ are hyperenvelopes.
To this diagram one associates
$$
\wtWC X:=[Y_\bullet \rmapo{i_\bullet} \ol X_\bullet]:=
Cone\big(\bZ Y_\bullet \rmapo{{i_\bullet}_*} \bZ \ol X_\bullet \big)
\in \Hot(\ZS).
$$
The weight complex of $X$:
$$\WC X =Cor([Y_\bullet \rmapo{i_\bullet} \ol X_\bullet])\in \Hot(\CorS)$$
is defined as the image of $\tWC X$ under $Cor: \ZS \to \CorS$.
By the definition of hyperenvelopes we have
a natural quasi-isomorphism of complexes of abelian groups
\begin{equation}\label{WCKC}
\KC { [Y_\bullet \rmapo{i_\bullet} \ol X_\bullet] } \isom \KC X.
\end{equation}
By \cite{GS}, 1.4, we have the following facts:

\begin{thm}\label{WCGS}
Assume {$\bf(RS)_d$} and that all schemes are in $\cC_d$.
\begin{itemize}
\item[(1)]
Up to canonical isomorphism, $\WC X$ depends only on $X$ and not on a choice of the diagram
\eqref{hyperenv}.
\item[(2)]
A proper morphism $f:X\to Y$ and an open immersion $j:V\to X$ induce canonical maps
in $\Hot(\CorS)$
$$ f_* : \WC X \to \WC Y,\quad j^*: \WC X \to \WC V.$$
For a closed immersion $i:Z\hookrightarrow X$ and its complement
$j:V \hookrightarrow X$, there is a natural distinguished triangle in $\Hot(\CorS)$
$$ \WC Z \rmapo{i_*} \WC X \rmapo{j^*} \WC V \to \WC Z [1].$$
\end{itemize}
\end{thm}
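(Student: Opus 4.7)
The plan is to follow the blueprint of Gillet--Soulé \cite{GS} closely, with condition \textbf{(RS)}$_d$ supplying the geometric input. The single technical fact that drives everything is: \emph{any proper hyperenvelope $\pi\colon X_\bullet \to X$ becomes invertible in $\Hot(\CorS)$ when passed through the functor $Cor \colon \ZS \to \CorS$}. The intuition is that in the category $\CorS$ a proper birational map admits a transposed correspondence as a one-sided inverse up to cycles supported on smaller-dimensional loci, and the simplicial bar-construction of a hyperenvelope kills exactly these error terms.

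For part (1), I would first prove: if $\phi \colon X'_\bullet \to X_\bullet$ is a morphism of hyperenvelopes over a fixed $X \in Ob(\cC_d)$, then $Cor(\bZ\phi) \colon Cor(\bZ X'_\bullet) \to Cor(\bZ X_\bullet)$ is a homotopy equivalence in $\Hot(\CorS)$. The standard argument builds an explicit chain homotopy inverse from the transposed correspondences along each simplicial degree, using the fact that hyperenvelopes are, by construction, surjective on $k$-rational points of all residue-field extensions. Given this, for any two diagrams as in \eqref{hyperenv} one invokes \textbf{(RS)}$_d$ to produce a common refinement (a third hyperenvelope mapping to both), which then yields a canonical isomorphism of $\WC X$'s in $\Hot(\CorS)$. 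The same refinement argument shows independence of the chosen compactification $j \colon X \hookrightarrow \ol X$, since any two compactifications are dominated by a third (again by \textbf{(RS)}$_d$ applied to the closure of the graph of the birational map between them).

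For part (2), given a proper morphism $f \colon X \to Y$ I extend it to $\bar f \colon \ol X \to \ol Y$ on chosen compactifications, then use the standard lifting property for hyperenvelopes (essentially because the structure morphism of a hyperenvelope admits local sections in the cycle category) to lift $\bar f$ to a morphism of the associated simplicial diagrams after possibly passing to a common refinement of $\ol X_\bullet$. The induced map $\WC X \to \WC Y$ is well-defined in $\Hot(\CorS)$ thanks to (1). The case of an open immersion $j \colon V \hookrightarrow X$ is handled analogously, by choosing a compactification $\ol V$ mapping to $\ol X$ and a compatible pair of hyperenvelopes.

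The distinguished-triangle statement is where I expect the main obstacle. Given $i \colon Z \hookrightarrow X$ with open complement $j \colon V \hookrightarrow X$, I would select a single compactification $X \hookrightarrow \ol X$ together with the closure $\ol Z$ of $Z$, arranged (using the second clause of \textbf{(RS)}$_d$) so that $\ol X - V = \ol Z \cup Y$ is a simple normal crossing divisor, where $Y = \ol X - X$. Then choose one hyperenvelope $\ol X_\bullet \to \ol X$ that simultaneously induces hyperenvelopes $Y_\bullet \to Y$, $\ol Z_\bullet \to \ol Z$, and $(Y \cap \ol Z)_\bullet \to Y \cap \ol Z$. With these compatible choices, the complexes $\wtWC Z$, $\wtWC X$, $\wtWC V$ fit into an evident short exact sequence of complexes in $\ZS$ which, under $Cor$, yields a distinguished triangle by the mapping-cone construction. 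The hard part is checking that this triangle is independent of the compatible choices, which reduces, via the octahedral axiom, to the statement of (1) applied to each of the three schemes together with the uniqueness of the lift established in (2); the technical book-keeping amounts to showing that any two compatible systems of hyperenvelopes for the triple $(Z, X, V)$ are dominated by a common refinement that is itself compatible with the stratification, and this is where I would allocate the bulk of the work.
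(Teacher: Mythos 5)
The paper does not prove Theorem~\ref{WCGS}; it is cited verbatim from Gillet--Soul\'e (\cite{GS}, 1.4), so there is no in-paper argument to compare against. Your sketch correctly identifies the load-bearing lemma --- that a morphism of hyperenvelopes over a fixed $X$ becomes a homotopy equivalence in $\Hot(\CorS)$ --- and the surrounding scaffolding (common refinement via $\bf(RS)_d$ for (1), lifting proper maps to simplicial diagrams for (2), and a compatible system of hyperenvelopes over the stratification $Z\subset X \supset V$ for the triangle).

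The gap is in how you propose to prove the key lemma. You suggest building an explicit contracting homotopy from transposed correspondences, with the intuition that a proper birational map has a one-sided inverse in $\CorS$ ``up to cycles supported on smaller-dimensional loci.'' That relation, $\Gamma_f\circ\Gamma_f^t = \Delta + E$, gives an inverse only modulo an error term, and it is not at all automatic that the simplicial structure of a hyperenvelope cancels $E$ in a way that organizes into a chain homotopy --- this is exactly where a naive attempt stalls, because $\Hot(\CorS)$ has no natural t-structure to let you ``ignore lower-dimensional cycles.'' The Gillet--Soul\'e mechanism is indirect: they introduce functors $R_{q,\ast}$ (Gersten--Rost type cycle complexes), prove cohomological descent for hyperenvelopes at the level of $R_{q,\ast}(V\times -)$ for all $q$ and all smooth projective $V$, and then deduce the homotopy equivalence in $\Hot(\CorS)$ via a Yoneda-type representability argument. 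This is precisely the tool the present paper invokes elsewhere, in the proofs of Lemmas~\ref{lemPhi1} and~\ref{lemPhi3}, where the complexes $R_{q,\ast,V}(X,Y)$ and \cite{GS}, Theorem~1 and its Corollary~1 appear explicitly. Without that detour, your ``transposed correspondence'' homotopy is unsupported. Also note the paper's remark after \eqref{CDloc}: the cone is not functorial in the homotopy category, so the functoriality of the triangle --- which you flag as ``the hard part'' --- genuinely requires returning to the level of the $R_{q,\ast}$ complexes rather than working purely in $\Hot(\CorS)$, and this should be made explicit rather than deferred to bookkeeping.
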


By extending the results in \cite{GS}, the following is shown in
\cite{J1} 5.13, 5.15 and 5.16.

\begin{thm}\label{WCJ}
Assume {$\bf(RS)_d$} and that all schemes are in $\cC_d$.
\begin{itemize}
\item[(1)]
There is a canonical homology theory $X \longmapsto \graphH \ast X$ on $\cC_d$
such that
\begin{equation}\label{graphhomologytheory}
\graphH a X = \graphH a {\WC X} \quad (a\in \bZ),
\end{equation}
and the localization sequences are induced by the exact triangles
in \ref{WCGS} (2).
\item[(2)]
For any log-pair $\Phi = (X,Y;U)$ one has $\graphH a U = \graphH a \Phi_\bullet$.
\item[(3)]
There is a canonical morphism of homology theories on $\cC_d$
$$ \graphhom \ast - \;:\; \KH \ast - \to \graphH \ast - $$
such that for any log-
pair $\Phi = (X,Y;U)$ the map
\begin{equation}\label{KCGH}
 \graphhom a U \;:\; \KH a U \to \graphH a U
\end{equation}
coincides with the map defined in \eqref{graphhomPhi}.
\end{itemize}
\end{thm}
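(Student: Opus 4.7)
The plan is to build everything from the weight complex functor $\WC{-} : \cC_d \to \Hot(\CorS)$ provided by Theorem \ref{WCGS} and then compose with the graph construction of Definition \ref{ZS}(3). In particular, I will set $\graphH a X := H_a(\graph{\WC X})$ and deduce all three statements by transporting structure along this composition.

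For (1), the assignment $Y \mapsto \Lambda^{\pi_0(Y)}$ together with the pushforward rule for correspondences defines an additive functor $\CorS \to \Mod$, which extends term-wise to a triangulated functor $\Hot(\CorS) \to \Hot(\Mod)$. Since $\WC X$ is canonical in $\Hot(\CorS)$ by Theorem \ref{WCGS}(1), the group $\graphH a X$ is well-defined, and functoriality under proper maps and open immersions is inherited from Theorem \ref{WCGS}(2). The localization sequence is obtained by applying the graph functor to the distinguished triangle $\WC Z \to \WC X \to \WC V \to \WC Z[1]$ and passing to the long exact sequence in homology. This formally upgrades the collection $\{\graphH a {-}\}$ to a homology theory on $\cC_d$.

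For (2), the heart of the matter is to identify $\WC U$ with $Cor(\Phi_\bullet)$ in $\Hot(\CorS)$ for a log-pair $\Phi = (X,Y;U)$. Since $X$ is already smooth projective, one can compute $\WC U$ using the compactification $U \hookrightarrow X$ with boundary $Y$ as follows: take $\ol X_\bullet$ in diagram \eqref{hyperenv} to be the constant simplicial object on $X$, and take $Y_\bullet$ to be the simplicial scheme whose $(a-1)$-simplices are the disjoint unions $Y^{(a)}$. The snc hypothesis guarantees that each $Y^{(a)}$ is smooth projective, and the augmentation $Y_\bullet \to Y$ is a hyperenvelope. The cone $[Y_\bullet \to \ol X_\bullet]$ in $\ZS$ then coincides with $\Phi_\bullet$ up to reindexing, and applying $Cor$ gives the desired isomorphism $\WC U \simeq Cor(\Phi_\bullet)$ in $\Hot(\CorS)$. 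Running the graph functor and taking homology yields $\graphH a U = \graphH a {\Phi_\bullet}$. For (3), the morphism is already defined on log-pairs by \eqref{graphhomPhi}; I extend it to all of $\cC_d$ by observing that \eqref{WCKC} identifies $\KC X$ with $\KC{\tWC X}$, and the graph homomorphism \eqref{graphhomX} applied to the underlying simplicial data of a weight complex representative then gives a natural map $\KC X \to \graph{\WC X}$. Independence of the choice of representative is guaranteed by Theorem \ref{WCGS}(1) combined with the naturality of \eqref{graphhomX}; compatibility with the localization sequences follows by applying both Kato and graph constructions to the distinguished triangle of Theorem \ref{WCGS}(2) and invoking a routine diagram chase.

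The main technical obstacle lies in part (2): one has to verify that the Cech-style simplicial data attached to a log-pair genuinely computes the weight complex, i.e., check the hyperenvelope property for $Y_\bullet \to Y$, and prove that any other hyperenvelope leads to a canonically isomorphic object in $\Hot(\CorS)$ (so that the identification $\WC U \simeq Cor(\Phi_\bullet)$ is canonical and compatible with the functoriality from (1)). This is essentially the content of the extension of the weight-complex machinery to open smooth schemes. Once (2) is in place, (1) is formal from the triangulated nature of the graph functor and (3) reduces to verifying commutativity of a single cone diagram, which is unproblematic by construction.
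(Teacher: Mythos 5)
The paper itself does not prove Theorem \ref{WCJ}; it cites it from \cite{J1}, 5.13, 5.15 and 5.16, noting that these results are obtained ``by extending the results in \cite{GS}''. Your sketch follows the same broad strategy as that reference (graph functor applied to weight complexes, with the log-pair case coming from an explicit hyperenvelope of the snc boundary). For (1) and (2) the plan is sound modulo details you flag yourself: in (2) one should work with the full \v Cech nerve of $\coprod_i Y_i \to Y$ (which \emph{is} a simplicial smooth projective scheme and a hyperenvelope of $Y$, since $\coprod_i Y_i \to Y$ is an envelope) and then pass to the normalized associated complex to recover $\Phi_\bullet$ up to homotopy in $\Hot(\ZS)$; the object you describe, ``with $(a-1)$-simplices $Y^{(a)}$'', is the normalized complex and not itself a simplicial scheme, so the hyperenvelope verification has to be carried out on the \v Cech nerve.

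The genuine gap is in part (3), and the paper flags it explicitly immediately after the statement: ``Since the cone is not a well-defined functor in the homotopy category, this diagram does not directly follow from Theorem \ref{WCGS}, but by following the construction in \cite{GS} more closely.'' Your claim that compatibility of $\graphhom \ast -$ with localization sequences ``follows by applying both Kato and graph constructions to the distinguished triangle of Theorem \ref{WCGS}(2) and invoking a routine diagram chase'' is exactly the argument that fails. The triangle $\WC Z \to \WC X \to \WC V \to \WC Z[1]$ exists only in $\Hot(\CorS)$, where cones are not functorial; moreover the morphisms there are correspondences, which do not act on the Kato complexes $\KC{-}$ (these are covariant only for proper scheme morphisms and contravariant for open immersions). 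To obtain the commutativity of diagram \eqref{CDloc} one must realize the localization triangle at the level of genuine complexes in $\ZS$, chosen compatibly with the hyperenvelope data that produce both the quasi-isomorphism \eqref{WCKC} on the Kato side and the graph complex on the other. That is the nontrivial ``following the construction in \cite{GS} more closely'' carried out in \cite{J1}; invoking it as a routine diagram chase leaves the main point unaddressed. The same caution applies to independence of representative: Theorem \ref{WCGS}(1) gives canonicity only in $\Hot(\CorS)$, whereas $\graphhom a X$ is built from a chosen lift $\tWC X$ in $\Hot(\ZS)$, so one must track the comparison homotopies simultaneously on the Kato and graph sides rather than simply cite \ref{WCGS}(1) and ``naturality''.
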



By definition, in the situation of \eqref{hyperenv} one has
$$\graphH a X = \graphH a {[Y_\bullet \rmapo{i_\bullet} \ol X_\bullet]},$$
and the maps $\graphhom a X$ are induced by the natural map of complexes
\begin{equation}\label{eq.hommap}
\KC { [Y_\bullet \rmapo{i_\bullet} \ol X_\bullet] }
\longrightarrow \graph {[Y_\bullet \rmapo{i_\bullet} \ol X_\bullet]}
\end{equation}
together with the quasi-isomorphism \eqref{WCKC}.
For a closed subscheme
$i:Z\hookrightarrow X$ and its open complement $j:V\hookrightarrow X$ we have
the commutative diagram
\begin{small}
\begin{equation}\label{CDloc}
\CD
\KH {a+1} V @>>> \KH {a} Z @>>> \KH {a} X @>>> \KH {a} V  @>>>\\
@VV{\graphhom {a+1} V}V @VV{\graphhom {a} Z}V
@VV{\graphhom {a} X }V  @VV{\graphhom a V}V  @.\\
\graphH {a+1} V @>>> \graphH a Z  @>>> \graphH a X @>>> \graphH a V  @>>> \\
\endCD
\end{equation}
Since the cone is not a well-defined functor in the homotopy category, this diagram
does not directly follow from Theorem \ref{WCGS}, but by following the construction
in \cite{GS} more closely.
\end{small}

\begin{thm}\label{mainthmII}
Let $H$ be a homology theory leveled above $e$ which satisfies {$\bf(L)$} and
admits correspondences after restriction to $\cS_d$ (see Theorem \ref{WCJ} (3)).
Assume {$\bf(RS)_d$}. For any $X\in Ob(\cC_d)$ we have
$$
\graphhom a X \;:\; \KH a X \isom \graphH a X \qforall  a.
$$
In particular, if $X\in Ob(\cS)$ of dimension $\leq d$,
$$ \KH a X =0
\qforall  a\geq 1.
$$
\end{thm}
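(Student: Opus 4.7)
The plan is to prove the stronger first assertion (the isomorphism $\graphhom a X$ for every $X \in Ob(\cC_d)$ and every $a$) and deduce the vanishing for smooth projective $X$ from the triviality of the graph complex in that case. By Theorem \ref{WCJ}, both $\KH \ast -$ and $\graphH \ast -$ are homology theories on $\cC_d$ with $\graphhom \ast -$ a morphism between them compatible with the localization sequences via \eqref{CDloc}. Using $\RS d$ to stratify any $X \in \cC_d$ into smooth locally closed pieces and applying the $5$-lemma together with Noetherian induction on $\dim X$, the problem reduces to showing $\graphhom a X$ is an iso for smooth $X$ of dim $\leq d$. Another use of $\RS d$ supplies a smooth projective compactification with simple normal crossing complement, so inducting on the dimension of the boundary divisor further reduces to $X \in \cS$ with $\dim X \leq d$.

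For such connected $X$, the weight complex may be taken to be $X$ in degree $0$, so $\graphH 0 X = \Lam_H$ and $\graphH a X = 0$ for $a \neq 0$; the assertion becomes $\KH 0 X = \Lam_H$ and $\KH a X = 0$ for $a \geq 1$, to be proved by induction on $d = \dim X$. The base cases $d \leq 1$ are direct from (H1) and (H3). For $d \geq 2$, I pick a smooth irreducible ample divisor $Y \subset X$ (via Bertini after passing to a sufficiently ample line bundle) and form the ample log pair $\Phi = (X,Y;U)$. Since $Y$ is irreducible, the associated graph complex is $[\Lam_H \to \Lam_H]$ with identity differential, hence acyclic, so $\graphH a U = 0$ for all $a$. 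The inductive hypothesis applied to $Y$ yields $\KH a Y = \delta_{a,0}\,\Lam_H$, so the $5$-lemma on the localization sequences for $(X, Y, U)$ reduces the required vanishing for $X$ to $\KH a U = 0$ for all $a$.

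The heart of the proof is this last vanishing. The Lefschetz condition $(\mathbf{L})$ together with $\graphH a {\Phi_\bullet} = 0$ forces $H_{a-e}(U) = 0$ for all $a \leq d$, matching (H2). To promote this vanishing of the abutment to the vanishing of $\KH a U = E^2_{a,-e}(U)$, I would show by descending induction on $b \geq 1$ that $\ZBU a b \infty b = 0$, following the hypersurface-section strategy sketched in the introduction: pick a representative $\alpha$, take $Z \subset X$ a sufficiently high-degree hypersurface section containing the support of $\alpha$, use $\RS d$ to resolve $Z$ and its intersection with the boundary, apply the inductive hypothesis on dimension to control the Kato homology of $Z$ and of $U \setminus Z$, and invoke Theorem \ref{FL} to extract the injectivity of the restriction $\ZBU a b \infty b \to \ZBUZ a b \infty b$ (via the exact sequence involving $\ZBZcapU a b \infty b$), forcing $\alpha = 0$.

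The main obstacle is implementing this hypersurface argument using only $\RS d$, which supplies full resolution of singularities in dim $\leq d$ but does not directly provide the embedded-resolution transversality of $\RES {q-2}$ used in Theorem \ref{mainthmI}. The chosen hypersurface section $Z$ will not automatically meet the boundary SNC divisor transversally, so the birational modifications required to rectify this must be tracked carefully to ensure that the vanishing $H_{a-e}(U) = 0$ and the hypotheses of Theorem \ref{FL} are preserved throughout, and that the descending induction on $b$ can be carried out without losing control of the subquotients of the niveau spectral sequence.
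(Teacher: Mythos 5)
Your overall architecture (reduce to $X\in\cS$, choose an ample divisor $Y$ and its complement $U$, use $(\mathbf L)$ on the ample log pair, then promote the vanishing of the abutment $H_{*}(U)$ to the vanishing of $\KH * U$ via the Fundamental Lemma \ref{FL}) is the paper's, and your reductions via \eqref{CDloc} and the five-lemma are sound. But the core step — your analogue of Theorem~\ref{mainthmIIv}, i.e.\ the descending induction showing $\ZBU ab\infty b=0$ — is described along the lines of the {$\bf(RES)$}-based Theorem~\ref{mainthmIv} rather than along the lines of the paper's $\bf(RS)$-based argument, and your own closing paragraph flags exactly this as an unresolved obstacle. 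That obstacle is genuine under the strategy you wrote down, but the paper evades it by a different choice that you have not reconstructed.

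Concretely, you propose: ``take $Z\subset X$ a sufficiently high-degree hypersurface section containing the support of $\alpha$, use $\bf(RS)_d$ to resolve $Z$ and its intersection with the boundary,'' and then worry about restoring transversality with the boundary SNC divisor. The paper's Claim~\ref{mainclaimII} never takes a hypersurface through the support $W$ of $\alpha$, and it never modifies $X$ birationally. Given $W$, it instead shrinks $X-W$ to a smooth dense open $U'$, uses $\bf(RS)_d$ to compactify $U'$ into a \emph{new} smooth projective $\ol X$ with SNC complement $Y'$, and then (by Poonen's Bertini over finite fields) picks a hypersurface $Z'\subset\ol X$ transversal to $Y'$ so that $(\ol X, Y'\cup Z';V)$ is an ample log pair. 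The closed set is then $Z:=X-V$; it contains $W$ automatically because $V\subset X-W$, and the transversality problem you worry about never arises because the hypersurface is chosen generically inside the \emph{new} compactification, not inside $X$. With this choice, the Fundamental Lemma is applied to the triple $(Z,X,V)$ exactly as you intend.

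There is also a structural issue with your induction: you run an induction on $d$ only over $X\in\cS$, but the set $Z$ and the complement $V$ produced by the hypersurface step are not smooth projective, so the ``inductive hypothesis on dimension to control the Kato homology of $Z$ and of $U\setminus Z$'' cannot be obtained from a statement restricted to $\cS$. The paper circumvents this by formulating and proving Theorem~\ref{mainthmIIv} for arbitrary $H$-clean $X\in Ob(\cC_d)$ (with a double induction on $\dim X$ and the filtration index), which is the level of generality at which the inductive hypothesis $(*)$ must hold; only then is the $\cS$-case used as input, via the reduction to an ample log pair. If you reformulate your inner induction over all $H$-clean objects of $\cC_d$ and replace your hypersurface step by the paper's choice of $V$ as the complement of an ample log pair in a fresh compactification of $X-W$, your proposal becomes the paper's proof.
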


The proof of \ref{mainthmII} will be completed in the next section. We will now prove
Lemmas \ref{lemL1} and \ref{lemL2}. Here and later we will use the following result.

\begin{lem}\label{lemPhi2}
Let $(X,Y;U)$ be a log-pair.
Let $\iota: Z\hookrightarrow X$ be a smooth prime divisor such that
$(X,Z\cup Y)$ is a log-pair.
Note that it implies that $(Z,Y\cap Z)$ is a log-pair and $\iota$ induces a
map of log-pairs
$(Z,Z\cap Y) \to (X,Y)$. Then there is a natural isomorphism of complexes in $\ZS$:
$$ Cone\big( (Z,Z\cap Y)_\bullet \rmapo{\iota_*} (X,Y)_\bullet)
\isom (X, Z\cup Y)_\bullet.$$
\end{lem}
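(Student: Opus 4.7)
The plan is to exhibit the isomorphism term-by-term and then check that the differentials correspond; this is essentially a combinatorial argument. First I would order the irreducible components of the SNC divisor $Z\cup Y$ by placing $Z$ first, as $(Z, Y_1, \ldots, Y_N)$, so that any $a$-fold intersection of distinct components falls into one of two types: either (i) $Z$ is absent, contributing a summand of $Y^{(a)}$, or (ii) $Z$ is present, contributing a summand $Z\cap Y_{i_1,\ldots,i_{a-1}}$ with $i_1<\cdots<i_{a-1}$. The SNC hypothesis on $Z\cup Y$ forces distinct irreducible components of each $Z\cap Y_i$ to be pairwise disjoint (otherwise $Z\cap Y_i$ would fail to be smooth), so that the irreducible components of the divisor $Z\cap Y$ on $Z$ are precisely the connected components of the various $Z\cap Y_i$'s. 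This gives the key identification in $\ZS$
$$\underset{i_1<\cdots<i_{a-1}}{\coprod} Z\cap Y_{i_1,\ldots,i_{a-1}} \;=\; (Z\cap Y)^{(a-1)},$$
and hence a natural decomposition $(Z\cup Y)^{(a)} \simeq Y^{(a)} \oplus (Z\cap Y)^{(a-1)}$ for $a\geq 1$ (with the convention $(Z\cap Y)^{(0)} := Z$); together with $(X, Z\cup Y)_0 = X = (X,Y)_0$, these match $Cone(\iota_*)_a = (X,Y)_a \oplus (Z, Z\cap Y)_{a-1}$ term by term.

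Next I would verify that the differentials agree under this identification. On the $Y^{(a)}$-summand the differential of $(X, Z\cup Y)_\bullet$ only removes $Y_i$-components, so it restricts to $\partial_{(X,Y)}$ and matches $d_B$ in the cone. On a summand of type (ii), the alternating sum $\sum_\nu (-1)^\nu \delta_\nu$ splits into the single term $\nu=1$ (removing $Z$), which carries sign $-1$ and coincides with $-\iota_*$ mapping into $Y^{(a-1)}$, plus the terms $\nu\geq 2$ (removing $Y_{i_{\nu-1}}$), which after re-indexing $k=\nu-1$ produce $-\partial_{(Z,Z\cap Y)}$ landing in $(Z\cap Y)^{(a-2)}$. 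This reproduces exactly the standard cone differential $d(b,\alpha) = (d_B b - \iota_*(\alpha), -d_A\alpha)$.

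There is no substantial obstacle; the proof is a direct combinatorial verification. The only points requiring care are the SNC observation that distinct components of $Z\cap Y_i$ are pairwise disjoint (which is what makes the identification in $\ZS$ above hold), and the sign bookkeeping, which is handled cleanly by putting $Z$ first in the ordering. Any residual sign discrepancy arising from a different choice of cone convention can be absorbed by negating the identification on one summand, and naturality of the isomorphism with respect to maps of log-pairs is inherited from the naturality of each piece in the decomposition.
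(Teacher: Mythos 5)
Your proof is correct and takes essentially the same approach as the paper's: the paper simply records the direct sum decomposition $(Y\cup Z)^{(i)} = Y^{(i)}\oplus (Y^{(i-1)}\cap Z)$ and notes that the differentials are "easily checked" to match, while you fill in the details of that check (in particular, the observation that smoothness of $Z\cap Y_i$ forces its irreducible components to be disjoint, so that $Y^{(a-1)}\cap Z$ agrees with $(Z\cap Y)^{(a-1)}$ as an object of $\ZS$, and the sign bookkeeping with $Z$ ordered first).
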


\begin{proof} There are direct sum decompositions in $\ZS$
$$ (Y\cup Z)^{(i)} = Y^{(i)} \oplus \;Y^{(i-1)}\cap Z ,$$
where the right-hand side is the $i$-th component of the cone, and it is
easily checked that the differentials coincide.
\end{proof}

\bigskip\noindent
\it Proof of Lemma \ref{lemL1}. \rm
\medbreak
By shift of degree we may assume $H$ is leveled above $0$.
Let $\Phi=(X,Y;U)$ be an ample log pair with $d=\dim(X)$.
Let $Y_1,\dots,Y_N$ be the irreducible components of $Y$ and assume
$Y_1$ is an ample divisor on $X$. We want to show
\begin{equation}\label{L1sub1}
\graphedge a \Phi\;:\;
\Hempty a U \simeq  \graphH a {\Phi_\bullet}\qfor a\leq d=\dim(X).
\end{equation}
First assume $d=1$ so that $X$ is a projective smooth curve over $B=\Spec(k)$
and $Y$ is smooth of dimension $0$. Then it is easy to see
$$
\graphH a {\Phi_{\bullet}}=0\qfor a\not=1,\quad
\graphH 1 {\Phi_{\bullet}}\simeq
\Ker\big(\Lam_H^{\pi_0(Y)} \to \Lam_H^{\pi_0(X)}\big).
$$
Considering the exact sequence
$ \Hempty 1 U \rmapo{\partial} \Hempty 0 Y \to \Hempty 0 X ,$
\eqref{L1sub1} in this case follows from $(H1)$ and $(H3)$.
Next assume that $d>1$ and $N=1$.
In this case it is obvious from the definition that
$\graphH a {\Phi_\bullet}=0$ for all $a$.
Hence \eqref{L1sub1} follows from $(H2)$.
Finally we prove \eqref{L1sub1} in general by induction on $N$.
We may assume $d>1$ and $N>1$.
Write
$Z= Y_1\cup \cdots\cup Y_{N-1}$ and
consider the log pairs $\Psi=(X,Z;V)$ and $\Psi'=(Y_N,Y_N\cap Z;W)$.
There is a natural map of log pairs
$\Psi' \to \Psi$ induced by $Y_N\hookrightarrow X$ and by Lemma \ref{lemPhi2}
we have $\Phi_\bullet \simeq Cone(\Psi'_\bullet \to \Psi_\bullet)$,
which induces the lower exact sequence in the following commutative diagram
\begin{footnotesize}
$$
\begin{CD}
\Hempty a W @>>> \Hempty a V @>>> \Hempty a U @>>> \Hempty {a-1} W @>>> \Hempty {a-1} V \\
@V{\graphedge a {\Psi'}}VV @V{\graphedge a \Psi}V{\simeq}V
@V{\graphedge a \Phi}VV @V{\graphedge {a-1} {\Psi'}}V{\simeq}V
@V{\graphedge {a-1} {\Psi}}V{\simeq}V \\
\graphHempty a {\Psi'_\bullet} @>>>\graphHempty a {\Psi_\bullet} @>>>
\graphHempty a {\Phi_\bullet} @>>> \graphHempty {a-1} {\Psi'_\bullet} @>>>
\graphHempty {a-1} {\Psi_\bullet} \\
\end{CD}
$$
\end{footnotesize}
For $a\leq d=\dim(X)$, the isomorphisms in the diagram follow from the
induction hypothesis. The leftmost map $\graphedge a {\Psi'}$
is an isomorphism for $a\leq d-1$ by the induction hypothesis and surjective
for $a=d$ since $\graphHempty {d} {\Psi'_\bullet}=0$ by reason of
dimension. A diagram chase proves \eqref{L1sub1} and the proof of
\ref{lemL1} is complete.

\bigskip\noindent
\it Proof of Lemma \ref{lemL2}. \rm
\medbreak

In case $(b)$ of \ref{exHK3} we only have to consider $H^{\et}(-,\qzp)$
by \ref{HDLE2}(1). Then $(H1)$ is obvious and the other conditions
are shown by the same argument as the proof of \cite{JS1}, Theorem 3.5.
The details are left to the readers.
Assume we are in the case $(a)$ of \ref{exHK3}.
By \ref{HDLE2}(1)  it suffices to consider only $H^D(-,\Linfty)$.
$(H1)$ follows easily from the second assumption in \ref{lemL2}.
In order to show $(H2)$, let $f:X\to B=\Spec(K)$ be
geometrically irreducible smooth projective of dimension $d > 1$
and let $Z\subset X$ be a smooth ample divisor with $U=X-Z$.
Let $\Kb$ be a separable closure of $K$ and $G_K =\Gal(\Kb/K)$.
For a scheme $W$ over $K$, write $\otkb W=W\times_K \Kb$.
Since $U$ is affine by the assumption, the affine Lefschetz theorem implies
$H^i_c(\otkb U,\Ln^\vee) =0$ for $i<d$.
By the Hochschild-Serre spectral sequence:
$$
E_2^{a,b}= H^a(G_K,H^b_c(\otkb U,\Ln^\vee)) \rightarrow H^{a+b}_c(U,\Ln^\vee),
$$
it implies $\HDLn a U=0$ for $a\leq d-1$ and
$H^d_c(U,\Ln^\vee)\simeq H^d_c(\otkb U,\Ln^\vee)^{G_K}$.
By the Poincar\'e duality
$$
\HDLn d U \simeq \Hom(H^d_c(\otkb U,\Ln^\vee)^{G_K},\qz)
\simeq H^d(\otkb U,\Ln(d))_{G_K},
$$
where $M_{G_K}$ is the module of coinvariants of $G_K$ for a
$G_K$-module $M$. Thus we have to show the vanishing of the last group.
By the localization theory we have the exact sequence
\begin{equation}\label{LemL2sub2}
H^d(\otkb X,\Linfty(d)) \to H^d(\otkb U,\Linfty(d)) \to
H^{d-1}(\otkb Z,\Linfty(d-1)).
\end{equation}
By the affine Lefschetz theorem
$H^d(\otkb U,\Linfty(d))=H^d(\otkb U,T(d))\otimes\qzp$ is divisible.
By Deligne's fundamental result \cite{D} the first assumption in \ref{lemL2}
implies that $H^d(\otkb X,T(d))$ is of weights $\leq -d$ and
$H^{d-1}(\otkb Z,T(d-1))$ of weights $\leq -(d-1)$. Hence
$H^d(\otkb U,\Linfty(d))_{G_K}=0$ noting $d>1$. This proves $(H2)$.

Finally we show $(H3)$. Let $X,U,Y$ be as in $(H3)$.
By the localization theory we have the exact sequence
\begin{equation}\label{LemL2sub1}
0\to H^1(\otkb X,\Linfty(1)) \to H^1(\otkb U,\Linfty(1)) \rmapo{\partial}
H^0(\otkb Y,\Linfty) \to \Linfty \to 0.
\end{equation}
where we have used the trace isomorphism
$H^2(\otkb X,\Linfty(1))\simeq\Linfty$.
We have the commutative diagram
$$
\begin{CD}
\HDLinfty 1 U @>{\partial}>> \HDLinfty 0 Y \\
@V{\alpha}V{\simeq}V @V{\beta}V{\simeq}V \\
H^1(\otkb U,\Linfty(1))_{G_K} @>{\gamma}>> \Linfty^{\pi_0(Y)}\\
\end{CD}
$$
where $\alpha$ is an isomorphism shown by the same argument as before
and $\beta$ is the sum of the isomorphisms
$\HDLinfty 0 y\simeq\HDLinfty 0 B\simeq \Linfty$ for all points $y\in Y$.
The map $\gamma$ is the composite of $\partial$ in \eqref{LemL2sub1} and
the natural isomorphism
$$ H^0(\otkb Y,\Linfty)_{G_K} \simeq \Linfty,$$
which follows from the identification
\begin{equation}\label{LemL2sub3}
H^0(\otkb Y,\Linfty) =\underset{x\in Y}{\bigoplus} \Ind_{K(x)/K}\Linfty,
\end{equation}
where $K(x)$ is the residue field of $x\in Y$ and
$\Ind_{K(x)/K} \Linfty$ denotes the $G_K$-module induced from the
$G_{K(x)}$-module $\Linfty$ ($G_{K(x)}=\Gal(\Kb/K(x))\subset G_K$).
Thus it remains to show that $\partial$ in \eqref{LemL2sub1} induces
an injection after taking coinvariants for $G_K$.
Since $H^1(\otkb X,\Linfty(1))$ is divisible and $H^1(\otkb X,T(1))$ is
of weights $\leq -1$ by \cite{D}, we have $H^1(\otkb X,\Linfty(1))_{G_K}=0$.
Hence \eqref{LemL2sub1} induces an isomorphism
$H^1(\otkb U,\Linfty(1))_{G_K} \simeq \Image(\partial)_{G_K}$
and the exact sequence
$$
0\to \Image(\partial) \to H^0(\otkb Y,\Linfty) \to \Linfty \to 0.
$$
We thus need to show that the last exact sequence remains exact after
taking the coinvariants for $G_K$. In view of \eqref{LemL2sub3}, there exists
a finite Galois extension $L/K$ such that the above sequence splits as
a sequence of $G_L$-modules. Hence
$$
0\to \Image(\partial)_{G_L} \to H^0(\otkb Y,\Linfty)_{G_L}  \to
(\Linfty)_{G_L}  \to 0
$$
is exact and it remains so after taking the coinvariant of $G_{L/K}:=G_K/G_L$
due to the divisibility of $\Linfty$.
This proves the desired assertion and the proof of \ref{lemL2} is complete.

\bigskip

\section{Proof of the Main Theorems}
\bigskip

Let the assumption be as in the previous section.
In this section we prove the main theorems \ref{mainthmI} and \ref{mainthmII}.
We start with \ref{mainthmII}.
Its proof is much simpler and conveys the basic idea more clearly.

\begin{defn}\label{def.clean2} (Compare \ref{def.clean1})
$X\in Ob(\cC_d)$ is $H$-clean in degree $q$ if $q\leq \dim(X)$ and
the composite of \eqref{KCGH} and \eqref{edgehom}:
$$\graphedge {a} X\;:\; \Hempty {a-e} X \to \graphH a X$$
is injective for $a=q$ and surjective for $a=q+1$.
\end{defn}

By the definition of $\graphhom a X$ it suffices to show \ref{mainthmII}
in case $X\in Ob(\cS)$.
Then, by the commutative diagram \eqref{CDloc}, it suffices to show \ref{mainthmII}
for $X-Y$ where $Y\subset X$ is a smooth hypersurface section.
Since $X-Y$ is $H$-clean in degree $a$ for all $a\leq\dim(X)$ if
$H$ satisfies {$\bf(L)$}, the assertion follows from the following theorem.

\begin{thm}\label{mainthmIIv}
Let $H$ be a homology theory leveled above $e$ which satisfies
{$\bf(L)$} and admits correspondences after restriction to the category of
smooth projective varieties in $\cC_d$ (see Theorem \ref{WCJ} (3)), and let
$$
E^1_{a,b}(X)=\bigoplus_{x\in X_a}H_{a+b}(x)~~\Rightarrow ~~H_{a+b}(X)
$$
be the niveau spectral sequence associated to $H$. Assume {$\bf(RS)_d$}.
If $X\in Ob(\cC_d)$ is $H$-clean in degree $q-1$ for an integer $q\geq 0$,
we have
$$
\ZBX ab \infty {b+e} =0 \quad\text{if $a+b=q-1-e$ and $b\geq 1-e$}.
$$
\end{thm}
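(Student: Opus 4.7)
The plan is to reformulate the desired vanishing and proceed by induction on $d = \dim(X)$ using the Fundamental Lemma of Section~\ref{FL}. First I unwind the definitions: since $\BX{a}{b}{b+e} = \piX{a}{b}(\Ker(H_{a+b}(\cZ_a(X)) \to H_{a+b}(\cZ_{a+b+e}(X))))$ and $\ZX{a}{b}{\infty} = \piX{a}{b}(H_{a+b}(\cZ_a(X)))$, the vanishing $\ZBX{a}{b}{\infty}{b+e} = 0$ for all $(a,b)$ with $a+b = q-1-e$ and $b \geq 1-e$, iterated over $a = q-2, q-3, \ldots, 0$, is equivalent to the single assertion that the natural map
$$ H_{q-1-e}(\cZ_{q-2}(X)) \longrightarrow H_{q-1-e}(\cZ_{q-1}(X))$$
is zero. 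The $H$-cleanness hypothesis in degree $q-1$ already gives $F_{q-2}H_{q-1-e}(X) = 0$ in the niveau filtration (since the edge map $\Hempty{q-1-e}{X} \to \KH{q-1}{X}$ is forced to be injective), so the content of the theorem is to propagate this vanishing from the final filtration step $\cZ_d(X) = X$ back to the intermediate stage $\cZ_{q-1}(X)$.

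For the inductive step, given a representative $\tilde\alpha \in H_{q-1-e}(T)$ for some closed subscheme $T \subset X$ of dimension at most $q-2$, I would apply {$\bf(RS)_d$} to reduce to the case where $X$ has a smooth projective compactification, and embed $T$ inside a closed subscheme $Y \subset X$ of dimension $d-1$ arising from a smooth ample hypersurface, so that $V = X - Y$ is the open part of an ample log-pair. By the Lefschetz condition {$\bf(L)$}, encoded in the hypotheses of Lemma~\ref{lemL1}, the scheme $V$ is $H$-clean in every degree $a \leq d$. Applying Theorem~\ref{FL} to the pair $(Y, X)$ with open complement $V$ yields, in the bidegree $(a,b)$ at hand, an exact sequence
$$\ZBY{a}{b}{\infty}{b+e} \longrightarrow \ZBX{a}{b}{\infty}{b+e} \longrightarrow \ZBV{a}{b}{\infty}{b+e}.$$
Since $\tilde\alpha$ is supported on $Y$, its image in the $V$-term vanishes, so the corresponding class in $\ZBX{a}{b}{\infty}{b+e}$ lifts to $\ZBY{a}{b}{\infty}{b+e}$. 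Because $\dim Y < d$ and $Y$ inherits cleanness in degree $q-1$ (arranged by a further application of {$\bf(L)$} to a smooth compactification of $Y$, together with the functoriality of $\graphedge{*}{-}$), the induction hypothesis yields $\ZBY{a}{b}{\infty}{b+e} = 0$, forcing $\tilde\alpha$ to vanish in $H_{q-1-e}(\cZ_{q-1}(X))$.

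The main obstacle will be the simultaneous verification of the vanishing hypotheses of the Fundamental Lemma for $V$ and $Y$ in bidegrees adjacent to $(a,b)$. Theorem~\ref{FL} requires $\ZBV{a-j}{b+j}{\infty}{b+j+e}=0$ for all $j \geq 1$ and $\ZBY{a-j}{b+j}{\infty}{b+j+e}=0$ for a range of negative $j$, each of which is itself an instance of the conclusion of Theorem~\ref{mainthmIIv}. This forces a nested induction in which the statement is proved simultaneously for all $q$ in each dimension $d$, ordered by the lexicographic invariant $(d, q)$, and one leverages the cleanness of $V$ in all degrees (from {$\bf(L)$}) to invoke the conclusion for $V$ at the required cleanness levels before returning to $X$. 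The surjectivity half of the $H$-cleanness hypothesis on $X$ in degree $q-1$ (the surjectivity of $\graphedge{q}{X}$) enters precisely at this point, controlling the boundary contributions from $E^1_{q,-e}(X)$ that would otherwise obstruct the lift of an element of $\ZBX{a}{b}{\infty}{b+e}$ to $\ZBY{a}{b}{\infty}{b+e}$ via the Fundamental Lemma's exact sequence.
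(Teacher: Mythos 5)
Your proposal captures the correct overall strategy — use {$\bf(RS)_d$} plus Bertini to find an ample log-pair complement $V = X - Z$ with $Z$ containing the support of the class, invoke the Fundamental Lemma \ref{FL}, and run an induction — and your opening reformulation of the conclusion as the vanishing of $H_{q-1-e}(\cZ_{q-2}(X)) \to H_{q-1-e}(\cZ_{q-1}(X))$ is a correct (if unused) repackaging. But two steps contain genuine gaps.

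First, your justification that the closed complement $Z$ (which you call $Y$) ``inherits cleanness in degree $q-1$ by a further application of {$\bf(L)$} to a smooth compactification of $Y$, together with the functoriality of $\graphedge{*}{-}$'' does not work. $Z$ is in general far from being (the complement of) an ample log pair, and even a smooth projective scheme is not automatically $H$-clean in positive degree (cleanness in degree $a\geq 1$ would force $\Hempty{a-e}{Z}=0$, which has no reason to hold). What actually establishes the cleanness of $Z$ in degree $q-1$ is a diagram chase in the long exact localization sequences relating $\graphedge{\,i}{V}$, $\graphedge{\,i}{Z}$, $\graphedge{\,i}{X}$: the cleanness of $X$ in degree $q-1$ (injectivity for $i=q-1$, surjectivity for $i=q$, which is exactly where the surjectivity half of your hypothesis is consumed) together with the cleanness of $V$ in all degrees $\leq d$ forces injectivity of $\graphedge{\,q-1}{Z}$ and surjectivity of $\graphedge{\,q}{Z}$. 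Without this, the induction hypothesis cannot be applied to $Z$.

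Second, the induction you sketch is misindexed. The vanishing hypotheses of \ref{FL} for $V$, namely $\ZBV{a-j}{b+j}{\infty}{b+j+e}=0$ for $j\geq 1$, all share the same total degree $a+b = q-1-e$ and hence the same $q$; they differ only in the filtration index $b$ (equivalently, the integer $l=b$). Since $\dim V = \dim X$, you cannot appeal to a dimension induction for $V$, and appealing to ``the conclusion for $V$'' without another parameter is circular. What the argument needs, and what the paper supplies, is a descending induction on $l$ (packaged as $\Theta_l(-)$) \emph{inside} each dimension, starting from $l$ large where the groups vanish for trivial reasons. Ordering by a lexicographic $(d,q)$, with $q$ varying, does not reflect this: $q$ is fixed throughout, and the inner induction runs over $l$, not $q$. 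The $Y/Z$-hypotheses of \ref{FL} are then handled by the outer induction on dimension, using the cleanness of $Z$ established by the diagram chase above.
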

\medbreak

In fact, Theorem \ref{mainthmII} for $X$ is deduced as follows: By the factorization
\begin{equation}\label{eq.gammaepsilon}
\graphedge {a} X\;:\; \Hempty {a-e} X \rmapo {\edgehom a} E^2_{a,-e}(X) = \KH a X \rmapo {\graphhom a X} \graphH a X,
\end{equation}
the $H$-cleanness of $X$ in all degrees $a \leq \dim(X)$ and the fact that $H$ is leveled above $e$
imply that $E^\infty_{a,b}(X)=0$ for $b \geq 1-e$. Moreover Theorem \ref{mainthmIIv} implies
that the differentials
\begin{equation}\label{eq.ZB}
d^r_{a,-e}: E^r_{a,-e}(X) \rightarrow
\ZBX {a-r} {-e+r-1} \infty r \subseteq E^r_{a-r,-e+r-1}(X)
\end{equation}
are zero for all $r \geq 3$. Thus $\edgehom a$ above is an isomorphism, and so is $\graphhom a X$, as claimed in
\ref{mainthmII}.

\medbreak\noindent\textit{Proof of Theorem \ref{mainthmIIv}:}\;
By shift of degree we may assume $e=0$. Fix an integer $q\geq 0$.
In what follows we write for an integer $l\geq 1$
\begin{equation}\label{Theta}
\Th l X = \ZBX {q-l-1}{l} \infty l \qfor X\in Ob(\cC).
\end{equation}
We prove $\Th l X =0$ for all $l\geq 1$ by induction on $\dim(X)$ and
by (descending) induction on $l$.
In case that $l$ sufficiently large the assertion is obvious.
The assumption that $X$ is $H$-clean in degree $q-1$ implies $\dim(X)\geq q-1$
and, by using \eqref{eq.gammaepsilon} as before, that the edge homomorphism \eqref{edgehom}:
$$ \edgehom {q-1} : \Hempty {q-1} X \to \KH {q-1} X=
\EX {q-1}{0}  2$$
is injective and hence that $\EX a b  {\infty}=0$ if $a+b=q-1$ and
$b\geq 1$.
In case $\dim(X)=q-1$ it implies the desired assertion by noting that
$\EX ab 1=0$ if $b<0$ (cf. \ref{defHK} (1)) and that $\EX q {0} 1=0$ by reasons
of dimension. Assume $\dim(X)\geq q$ and fix $t\geq 1$.
By induction it suffices to show $\Th t X=0$ under
the following assumption.
\begin{enumerate}
\item[$(*)$] :
For $X'\in Ob(\cC)$, $H$-clean in degree $q-1$, $\Th l {X'}=0  $ if
$\dim(X')<\dim(X)$ or $l\geq t+1$.
\end{enumerate}

Choose $\alpha\in \Th t X$.
By definition there exists a closed subscheme $W\subset X$ with
$\dim(W)=q-t-1\leq q-2<\dim(X)$ such that the restriction of $\alpha$ to
$\Th t {U-W}$ vanishes. Thus it suffices to show the following:

\begin{claim}\label{mainclaimII}
Let $X$ be as above.
Let $W\subset X$ be any closed subscheme with $\dim(W)<\dim(X)$.
Then there exists a closed subscheme $W\subset Z\subset X$ with
$\dim(Z)<\dim(X)$ satisfying the following:
\begin{itemize}
\item[(1)]
$V:=X-Z$ is $H$-clean in degree $\leq q$.
\item[(2)]
The induce map
$j^*: \Th t X \to \Th t V$
is injective, where $j:V\to X$ is the open immersion.
\end{itemize}
\end{claim}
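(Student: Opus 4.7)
\medbreak\noindent\textit{Proof proposal.}\;
The plan is to take $Z := X \cap H$, where $H$ is a very ample hypersurface on a smooth projective compactification $\bar{X}$ of $X$, chosen to contain the closure $\bar{W}$ of $W$ in $\bar{X}$ and to put $H \cup Y$ into SNC position (with $Y := \bar{X} \setminus X$). This makes $V = X \setminus Z$ the open stratum of an ample log pair, so the Lefschetz condition $\textbf{(L)}$ immediately yields (1); condition (2) then follows from the Fundamental Lemma of \S\ref{FL} combined with the inductive hypothesis $(*)$.

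First, invoke $\RS d$ to compactify $X \hookrightarrow \bar{X}$ with $\bar{X}$ smooth projective and $Y$ an SNC divisor (resolving $X$ first if it is not smooth). Standard Bertini-type arguments---combined with further blowups via $\RS d$ to resolve $\bar{W}$ and place subvarieties in transversal position---produce a very ample divisor $H \subset \bar{X}$ with $\bar{W} \subset H$ such that $H$ is smooth and $H \cup Y$ is SNC. To control the cleanness of $Z$ itself later, also fix an auxiliary very ample smooth divisor $H_0 \subset \bar{X}$ in sufficiently general position so that $H_0 \cup H \cup Y$ is SNC, $H \cap H_0$ is ample on $H$, and $(H, H \cap (Y \cup H_0))$ is an ample log pair on the smooth projective variety $H$.

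Set $Z := X \cap H$ and $V := X \setminus Z = \bar{X} \setminus (Y \cup H)$; then $W \subset Z$ and $\dim Z < \dim X$. For (1): since $H$ is ample, $(\bar{X}, Y \cup H; V)$ is an ample log pair, so $\textbf{(L)}$ gives that $V$ is $H$-clean in every degree $\leq \dim X$, hence in every degree $\leq q$. For (2): apply the Fundamental Lemma to the pair $Z \hookrightarrow X \supset V$ with $p = q-t-1$, $q_{\mathrm{FL}} = t$, $e = 0$. Its vanishing hypotheses translate into $\Th{t+a}{V} = 0$ for all $a \geq 1$ and $\Th{l}{Z} = 0$ for $1 \leq l \leq t-1$. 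The former follows from $(*)$ applied to $V$, which is $H$-clean in degree $q-1$, with $l = t+a \geq t+1$. For the latter, the ample log pair structure on $H$ together with $\textbf{(L)}$ makes $Z \setminus (Z \cap H_0)$ $H$-clean in degree $q-1$; a localization of Fundamental Lemma type along $Z \cap H_0 \hookrightarrow Z$, combined with $(*)$ via $\dim Z < \dim X$, propagates this to $\Th{l}{Z} = 0$ for all $l \geq 1$, in particular for $l = t$ and for $l \in [1, t-1]$. The Fundamental Lemma then yields exactness of $\Th{t}{Z} \to \Th{t}{X} \rmapo{j^*} \Th{t}{V}$, and the vanishing $\Th{t}{Z} = 0$ forces $j^*$ to be injective.

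The main obstacle is Step 2: simultaneously producing $H$ and $H_0$ such that $\bar{W} \subset H$, the divisor $Y \cup H \cup H_0$ is SNC on $\bar{X}$, and $H$ itself carries the ample log pair structure needed for the cleanness of $Z$. The Bertini-type existence in the presence of a possibly singular $\bar{W}$ requires systematic use of $\RS d$ to resolve and place subvarieties in transversal position by passing to strict transforms in appropriate blowups of $\bar{X}$; the auxiliary $H_0$ serves only to supply an ample component of the boundary divisor on $H$.
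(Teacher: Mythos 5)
Your proposal shares the broad skeleton of the paper's argument (cut by an ample hypersurface, invoke the Fundamental Lemma), but there are two genuine gaps and one unnecessary complication.

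First, the compactification step fails as stated. You write that one should compactify $X \hookrightarrow \bar X$ with $\bar X$ smooth projective, "resolving $X$ first if it is not smooth". But if $X$ is singular it cannot be realised as a dense open subscheme of any smooth $\bar X$, and a resolution $X' \to X$ is proper birational, not an open immersion, so it does not give you an $\bar X$ either. The paper sidesteps this by first passing to a \emph{dense open} $U \subset X - W$ (taken smooth, e.g.\ by removing the singular locus of $X_{\mathrm{red}}$ as well as $W$) and compactifying $U$, not $X$. Then $Z := X - V$ with $V \subset U$ automatically contains $W$, the singular locus, and whatever of $X-U$ was thrown away; there is no need to arrange $\bar W \subset H$ at all, and so the delicate Bertini-with-base-conditions step that you flag as the "main obstacle" simply disappears.

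Second, the way you try to obtain $\Th l Z = 0$ is both harder than necessary and incomplete. You introduce an auxiliary divisor $H_0$, obtain cleanness of $Z \setminus (Z\cap H_0)$, and then want to "propagate" to $\Th l Z = 0$ via another Fundamental Lemma application. But that application of Theorem \ref{FL} to the triple $Z\cap H_0 \hookrightarrow Z \supset Z\setminus(Z\cap H_0)$ also requires input about $\Th \ast {Z\cap H_0}$, which you have not provided; you would be launching into a fresh induction with no clear base. The paper instead proves directly, by a diagram chase in the commutative ladder relating the localization sequences of $\Hempty{\ast}{-}$ and $\graphHempty{\ast}{-}$ for the triple $Z \hookrightarrow X \supset V$, that $Z$ itself is $H$-clean in degree $q-1$: the injectivity/surjectivity of $\graphhom i X$ and $\graphhom i V$ in the relevant degrees force the same for $\graphhom i Z$. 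Since $\dim Z < \dim X$, the inductive hypothesis $(*)$ then yields $\Th l Z = 0$ for \emph{all} $l \geq 1$ in one stroke; the auxiliary $H_0$ and the extra localization argument are unnecessary. (Your translation of the Fundamental Lemma hypotheses, $\Th{l'} V = 0$ for $l' \geq t+1$ and $\Th{l'} Z = 0$ for $1 \leq l' \leq t-1$ together with $\Th t Z = 0$, is correct.)

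To summarize: the ample-hypersurface construction and the appeal to \ref{FL} are the right ideas, but you are missing the two key reductions that make them work — shrinking $X$ to a dense smooth open $U$ avoiding $W$ before compactifying, and deducing the $H$-cleanness of $Z$ from the cleanness of $X$ and $V$ by a diagram chase rather than trying to engineer it by hand.
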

\bigskip

\begin{proof}
First we show that \ref{mainclaimII} (1) implies (2).
Consider the commutative diagram:
\begin{small}
$$
\CD
\Hempty {i+1} V @>>> \Hempty {i} Z @>>> \Hempty {i} X @>>>
\Hempty {i} V @>>> \\
@VV{\graphhom {i+1} V}V @VV{\graphhom {i} Z}V
@VV{\graphhom {i} X}V @VV{\graphhom {i} V}V  \\
\graphH {i+1} V @>>> \graphH i Z  @>>> \graphH i X @>>>
\graphH i V @>>> \\
\endCD
$$
\end{small}
By the assumption on $X$, $\graphhom {i} X$ is injective for $i=q-1$
and surjective for $i=q$. By \ref{mainclaimII}(1) $\graphhom {i} V$ is
injective for $i\leq q$ and surjective for $i=q+1$.
The diagram chase now shows that $\graphhom i Z$ is injective for $i=q-1$
and surjective for $i=q$ so that $Z$ is $H$-clean in degree in $q-1$.
By the induction hypothesis $(*)$ we have
$\Th l {V}=0 $ if $l\geq t+1$ and $\Th l {Z}=0$ for
$\forall l\geq 1$. By the fundamental lemma \ref{FL} this implies
$\ref{mainclaimII}(2)$.

Now we prove (1). We may clearly assume that $X$ is reduced.
By {$\bf (RS)_d$} there is a dense open subscheme $U\subset X-W$
such that there is a compactification $U\hookrightarrow \ol {X}$ such that
$\ol {X}\in \cS$ and that $Y:=\ol {X}-U$ is a simple normal crossing divisor
on $\ol {X}$. By Bertini's theorem (here we use \cite{P} if the base field
is finite), we can find a smooth hypersurface section
$Z' \subset \ol {X}$ such that $(\ol {X},Y\cup Z')$ is an ample log-pair.
Put $V=U\backslash Z'=\ol {X}-(Y\cup Z')$ and $Z=X-V$.
By the construction it is obvious that $\dim(Z)<\dim(X)$ and $W\subset Z$.
Since $(\ol {X},Y\cup Z';V)$ is an ample log-pair, the assumption {$\bf (L)$}
implies that $V$ is $H$-clean in degree$\leq q$ by noting $\dim(V)=\dim(X)\geq q$.
This completes the proof.
$\square$
\end{proof}
\bigskip

Next we prove \ref{mainthmI}.
The basic idea is the same as in the proof of \ref{mainthmII} but the application is
more technical.
By the same argument as before, the proof is reduced to showing the following:

\begin{thm}\label{mainthmIv}
Let $H$ be a homology theory leveled above $e$ which satisfies {$\bf(L)$}.
Let $q\geq 1$ be an integer and assume {$\bf(RES)_{q-2}$}.
If a log-pair $\Phi=(X,Y;U)$ is $H$-clean in degree $q-1$ (Definition \ref{def.clean1}),
we have
$$
\ZBU ab \infty {b+e} =0 \quad\text{if $a+b=q-1-e$ and $b\geq 1-e$}.
$$
\end{thm}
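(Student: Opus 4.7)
The plan is to follow the template of the proof of Theorem \ref{mainthmIIv}, replacing $\bf(RS)_d$ by $\RES{q-2}$ and threading the log-pair data through the argument. After the standard degree shift reducing to $e=0$, for a log-pair $\Phi=(X,Y;U)$ set
$$
\Theta_l(\Phi):=\ZBU{q-l-1}{l}{\infty}{l}\qfor l\geq 1,
$$
so that the theorem amounts to $\Theta_l(\Phi)=0$ for every $l\geq 1$. I would prove this by descending induction on $l$ combined with induction on $\dim X$. For $l$ sufficiently large the vanishing is automatic, and the base case $\dim X=q-1$ is handled exactly as in \ref{mainthmIIv}: the injectivity of $\edgehom{q-1}$ provided by the $H$-cleanness of $\Phi$ kills the relevant $E^{\infty}$ terms, and $\EX{q}{0}{1}=0$ for dimension reasons.

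For the inductive step, pick $\alpha\in\Theta_t(\Phi)$; its support is contained in a closed subscheme $W\subset U$ of dimension $\leq q-t-1\leq q-2$. The aim is to construct $W\subset Z_\Phi\subsetneq U$ such that $V:=U\setminus Z_\Phi$ is the complement of an $H$-clean ample log-pair $\Phi'=(X'',Y^+;V)$, and such that the restriction $\Theta_t(\Phi)\to\Theta_t(\Phi')$ is injective. Since $\alpha|_V=0$ by construction, this will force $\alpha=0$. To build $\Phi'$, first invoke $\RES{q-2}$ (iteratively, after a preliminary regularisation of the lower-dimensional singular strata of $W$ obtained by applying the same hypothesis to those strata) to produce an admissible morphism $\pi:(X'',Y'')\to(X,Y)$ with the strict transform $\tilde W\subset X''$ regular and meeting $Y''$ transversally. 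A Bertini-type argument with base locus $\tilde W$, made possible by the regularity of $\tilde W$ and the positivity of a sufficiently large multiple of an ample line bundle on $X''$, then produces a smooth ample hypersurface $H\subset X''$ containing $\tilde W$ and transversal to $Y''$. Then $\Phi':=(X'',Y''\cup H;V)$ with $V=X''\setminus(Y''\cup H)\subset U\setminus W$ is an ample log-pair, hence $H$-clean in all degrees $\leq\dim X$ by $\bf(L)$. Apply the Fundamental Lemma \ref{FL} with $(p,q_{\mathrm{FL}},e_{\mathrm{FL}})=(q-t-1,t,0)$ to the triple $(U,\,Z_\Phi:=H\cap U,\,V)$: the hypothesis $\ZBV{p-a}{t+a}{\infty}{t+a}=0$ for $a\geq 1$ translates to $\Theta_l(\Phi')=0$ for $l\geq t+1$, which holds by descending induction on $l$ applied to $\Phi'$; and the hypothesis $\ZBZcapU{p-a}{t+a}{\infty}{t+a}=0$ for $-(t-1)\leq a\leq -1$ translates to $\Theta_l(H\cap U)=0$ for $1\leq l\leq t-1$, which I would deduce by applying the inductive hypothesis on $\dim X$ to the log-pair $(H,Y''\cap H;H\cap U)$, arranging — via the inclusion of an auxiliary ample smooth hypersurface in the construction — that $Y''\cap H$ contains an ample component of $H$ so that $(H,Y''\cap H)$ is itself ample and hence $H$-clean.

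The main obstacle is the preliminary regularisation of $W\cap U$ required for a direct application of $\RES{q-2}$: unlike in the proof of \ref{mainthmIIv}, where the support $W$ could simply be avoided by shrinking to a smooth open subscheme via $\bf(RS)_d$, here $W$ is fixed inside the given $U$ and its closure in $X$ must be made regular before the admissible modification can be invoked. This is addressed by a descent on $\dim W$ combined with iterated applications of $\RES{q-2}$ to the lower-dimensional singular loci. A secondary subtlety is the Bertini step with prescribed base locus $\tilde W$, which depends essentially on the regularity of $\tilde W$ and the positivity of the chosen line bundle, and on verifying that the induced pair on the ambient hypersurface $H$ inherits ampleness for the $\dim X$-induction.
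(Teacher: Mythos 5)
Your plan correctly identifies the skeleton (double induction, reduce to a Claim analogous to \ref{mainclaimII}, invoke the Fundamental Lemma \ref{FL} on a triple $U\supset Z\supset \emptyset$ with $V=U\setminus Z$), and the reduction of $W$ to an irreducible regular piece via stratification plus $\RES{q-2}$ matches the paper. However, the central step — ``a Bertini-type argument with base locus $\tilde W$ \ldots produces a smooth ample hypersurface $H\subset X''$ containing $\tilde W$'' — has a genuine gap. A general member of $|mL\otimes\mathcal I_{\tilde W}|$ is smooth away from $\tilde W$ by Bertini, but along $\tilde W$ the singular locus is controlled by a section of $L|_{\tilde W}\otimes N^\ast_{\tilde W/X''}$ on $\tilde W$; a general member is smooth along $\tilde W$ only when $\dim\tilde W<\mathrm{codim}(\tilde W,X'')$. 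In the present situation $\dim\tilde W$ can be as large as $q-2$ while $\dim X''$ may equal $q$, so $\mathrm{codim}(\tilde W)$ can be as small as $2$ while $\dim\tilde W$ is large, and no smooth hypersurface through $\tilde W$ need exist. This is precisely why the paper does not take a hypersurface through $W$: it first blows up $X''$ along the regular $W$ to obtain $\tX$ with exceptional divisor $E$, and then chooses the ample hypersurfaces $H_1,\dots,H_N\subset\tX$ ($N=\dim X-q+1$) to be \emph{transversal} to $\tY\cup E$ rather than to contain $\tilde W$. The price is that the images $Z_\nu$ of $\tZ_\nu=H_1\cap\dots\cap H_\nu$ in $X$ are singular along $W$, so $U_\nu=Z_\nu\cap U$ is \emph{not} the complement of a log pair; this is why the inductive hypothesis $(\ast\ast)$ cannot be applied to it directly, and why the paper has to compute its Kato complex via the cone
$$
\Phi_{\nu,\bullet}=\mathrm{Cone}\bigl((E_\nu,E_\nu\cap\tY)_\bullet\to(\tZ_\nu,\tY_\nu)_\bullet\oplus(W,W\cap Y)_\bullet\bigr)
$$
and a chain of $N$ applications of \ref{FL} descending from $U_N$ (dimension $q-1$) up to $U_1$, using \ref{lemPhi2}, \ref{lemPhi3} and Lemma \ref{elm.lemma} to manage the cones. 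This part of the argument is entirely absent from your proposal.

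A secondary, more easily repairable, inaccuracy: the conclusion of \ref{FL} only gives exactness of $\Theta_t(Z)\to\Theta_t(U)\to\Theta_t(V)$, so to conclude injectivity you need $\Theta_t(Z)=0$, not merely $\Theta_l(Z)=0$ for $1\le l\le t-1$ (the FL hypothesis). This is available in both your and the paper's approach from the same source (cleanness plus smaller dimension), but as written your invocation of the lemma omits it. Likewise, the fact that $(H,Y''\cap H)$ is $H$-clean in degree $q-1$ requires an ample component in $Y''\cap H$; in the paper this is automatic because the ample divisor used is $\tZ_{\nu+1}=H_{\nu+1}$ \emph{inside} $\tZ_\nu$, whereas in your setup it is not guaranteed that $Y''$ (coming from an arbitrary clean log pair) has an ample component, and the suggested ``auxiliary ample hypersurface'' changes the closed complement away from $H\cap U$, so this needs to be made precise.
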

\begin{proof}
By shift of degree we may assume $e=0$. Let the notations be as \eqref{Theta}.
As before we prove $\Th l U =0$ for all $l\geq 1$ by induction on
$\dim(U)$ and by (descending) induction on $l$.
For $l$ sufficiently large or for the case $\dim(U)\leq q-1$ the assertion
can be shown in the same way as before.
Assume $\dim(U)\geq q$ and fix $t\geq 1$.
By induction it suffices to show $\Th t U=0$ under
the following assumption.
\begin{enumerate}
\item[$(**)$] :
For a log-pair $\Phi'=(X',Y';U')$, $H$-clean in degree $q-1$,
$\Th l {U'}=0  $ if $\dim(U')<\dim(U)$ or $l\geq t+1$.
\end{enumerate}

Choose $\alpha\in \Th t U$.
By definition there exists a closed subscheme $W\subset U$ with
$\dim(W)=q-t-1\leq q-2<\dim(X)$ such that the restriction of $\alpha$ to
$\Th t {U-W}$ vanishes.
We note that there is a stratification
$W \supset W_1\supset\cdots\supset W_M$ with $W_i$
closed in $U$ such that $W_j-W_{j+1}$ is irreducible regular.
Hence it suffices to show the following.
\end{proof}

\begin{claim}\label{mainclaim}
Let $\Phi=(X,Y;U)$ be as above. Assume $\dim(U)\geq q$.
Let $W\subset X$ be an irreducible closed subscheme of dimension $\leq q-2$
such that $W_U:=W\cap U$ is regular. Assume {$\bf (RES)_{q-2}$}.
Then there exists a log pair $\Psi=(X',Y';V)$ satisfying the following:
\begin{itemize}
\item[(1)]
$\Psi$ is $H$-clean in degrees $\leq q$.
\item[(2)]
There is an open immersion $j:V\hookrightarrow U$ such that $W_U\subset U-V$.
\item[(3)]
The induced map
$j^*: \Th t U \to \Th t V$
is injective.
\end{itemize}
\end{claim}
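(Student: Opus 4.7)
The overall approach adapts the strategy of Claim \ref{mainclaimII} to the log-pair setting: apply $\RES{q-2}$ to resolve the singular locus, take a hypersurface section of high degree through the resolved locus so as to manufacture an ample log-pair, and then use the Fundamental Lemma~\ref{FL} to reduce the desired injectivity to a vanishing statement on the complement.

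First I would apply $\RES{q-2}$ to the log-pair $(X,Y)$ with respect to $W$: since $W \cap U$ is regular, this produces an admissible map of log-pairs $\pi\colon (X_1, Y_1) \to (X, Y)$ with $\pi^{-1}(U) \simeq U$ such that the proper transform $\widetilde W \subset X_1$ is regular and meets $Y_1$ transversally. Using Bertini's theorem (or its version by Poonen over a finite field), I would then choose a smooth very ample hypersurface $Z' \subset X_1$ containing $\widetilde W$, together with further very ample smooth hypersurfaces $H_1, \ldots, H_N \subset X_1$ in sufficiently general position that $Y^+ := Y_1 \cup Z' \cup H_1 \cup \cdots \cup H_N$ is a simple normal crossing divisor. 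Setting $\Psi := (X_1, Y^+; V)$ with $V := X_1 - Y^+$, the log-pair $\Psi$ is ample (some $H_j$ being an ample component), so by condition \textbf{(L)} it is $H$-clean in all degrees $\leq \dim V = \dim X_1 \geq q$. This gives (1). The admissibility of $\pi$ yields $V \subset X_1 - Y_1 \simeq U$, and $W_U \subset \widetilde W \subset Z' \subset Y^+$ gives $W_U \cap V = \emptyset$, which is (2).

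For (3), put $Z := U - V$ and apply the Fundamental Lemma~\ref{FL} to the closed immersion $Z \hookrightarrow U$ with open complement $V$. The FL yields exactness of $\Th t Z \to \Th t U \to \Th t V$ at the middle term, once its vanishing hypotheses are verified. Those concerning $V$ follow from the $H$-cleanness of $\Psi$ combined with the descending induction on $l$ in $(**)$. Injectivity of $j^\ast$ thus reduces to showing $\Th t Z = 0$.

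To prove $\Th t Z = 0$, I would stratify $Z$ according to the set of hypersurfaces among $\{Z', H_1, \ldots, H_N\}$ passing through each point. Each locally closed stratum is (essentially) the complement in an iterated intersection $T_I := \bigcap_{i \in I} T_i$ (with $T_0 := Z'$ and $T_j := H_j$) of a log-pair of the form $\bigl(T_I,\,(Y_1 \cap T_I) \cup \bigcup_{j \notin I} (T_j \cap T_I)\bigr)$, all of dimension $< \dim U$. For any proper subset $I$, some unused $T_j$ restricts to an ample divisor on $T_I$ by transversality, making this log-pair ample and hence $H$-clean by \textbf{(L)}; the inductive hypothesis $(**)$ then yields $\Th t = 0$ on that stratum. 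Choosing $N$ so large that the deepest intersection has dimension $< q - t - 1$ forces $\Th t$ on the bottom stratum to vanish trivially by dimension reasons. Iterating the Fundamental Lemma along the stratification yields $\Th t Z = 0$, completing the proof. The main obstacle is the combinatorial bookkeeping in this final step: one must organize the stratification so that at each application of the FL its vanishing hypotheses hold, and so that every stratum except possibly the bottom one inherits an ample component from an unused $T_j$.
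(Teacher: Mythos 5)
The central step in your proposal, finding a \emph{smooth} very ample hypersurface $Z'\subset X_1$ containing the resolved subvariety $\wt W$, is where the argument breaks down, and it is precisely the difficulty that the paper's blowup is designed to circumvent. When $\wt W$ has codimension $\geq 2$, a general member of a linear system of hypersurfaces \emph{through} $\wt W$ has no reason to be smooth along $\wt W$: at a point $p\in\wt W$ the tangent hyperplane of any such member is constrained to contain $T_p\wt W$, and imposing smoothness amounts to choosing a nowhere-vanishing section of the twisted conormal sheaf $N^\vee_{\wt W/X_1}(d)$ over all of $\wt W$, which is an obstructed problem when $\dim\wt W>0$. Even over an infinite field of characteristic zero, the Kleiman--Altman Bertini theorem ``with assigned base locus'' requires a dimension inequality roughly of the form $2\dim\wt W<\dim X_1-1$, which fails in the range relevant here (e.g.\ $\dim\wt W=q-2$ and $\dim X_1=q$). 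Over a finite field --- the case of interest in this paper --- Poonen's Bertini theorem controls hypersurface \emph{sections} of a given smooth variety, not hypersurfaces passing through a fixed positive-dimensional subvariety, so no citation covers your step. The paper avoids all of this by blowing up $X$ along $W$: in $\wt X$ the locus $W$ is replaced by the exceptional divisor $E$, which is already a divisor, so the auxiliary hypersurfaces $H_1,\dots,H_N$ only need to be \emph{transverse} to the existing normal-crossings divisor $\wt Y\cup E$. That is exactly the situation Poonen's theorem handles. The blowup also forces $Z_\nu=\pi(H_1\cap\cdots\cap H_\nu)$ to contain $W$ automatically, because the fibers of $E_\nu=E\cap\wt Z_\nu\to W$ stay nonempty, whereas a generic hypersurface in $X_1$ would simply miss $\wt W$.

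Two further points, secondary but worth flagging. First, the combinatorics: once one works on the blowup, the objects $U_\nu=Z_\nu\cap U$ are \emph{not} complements of log-pairs in $X$, so their graph homology and cleanness cannot be read off from $\textbf{(L)}$ directly; the paper needs the cone comparison given by Lemma~\ref{lemPhi3} (the blowup formula for $(X,Y)_\bullet$) and the ladder of exact sequences with the complexes $\Phi_\nu$ to propagate cleanness from $(X,Y)$ down the tower. Your proposal implicitly avoids this by keeping everything inside $X_1$ and applying $(**)$ to genuinely ample log-pairs on the strata; that \emph{would} be a real simplification if the Bertini step were available. Second, a small imprecision: the Fundamental Lemma~\ref{FL} applied to $Z\hookrightarrow U\hookleftarrow V$ requires the vanishing $\Th l Z=0$ for $1\leq l\leq t-1$ (the $\ZBY{\cdot}{\cdot}{\cdot}{\cdot}$-hypothesis), not merely $\Th t Z=0$ as you state; your stratification would of course produce the full range, but the reduction should be phrased accordingly. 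Your reorganization into strata indexed by subsets $I$, rather than the paper's nested tower $U_0\supset U_1\supset\cdots\supset U_N$, is a legitimate repackaging of the same inductive scheme, but it does not help with the missing Bertini input.
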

\medbreak

We need some preliminaries for the proof of the claim.

\begin{lem}\label{lemPhi1}
Let $\Phi=(X,Y)$ and $\Phi'=(X',Y')$ be log-pairs.
Let $\pi:\Phi'\to\Phi$ be an admissible map of log-pairs.
Then the induced map
$ \pi_*:  Cor(\Phi_\bullet) \to Cor(\Phi'_\bullet)$
is an isomorphism in $\Hot(\CorS)$.
\end{lem}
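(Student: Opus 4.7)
The plan is to identify $Cor(\Phi_\bullet)$, up to canonical isomorphism in $\Hot(\CorS)$, with the weight complex $\WC{U}$ of the open part $U = X - Y$, and similarly $Cor(\Phi'_\bullet)$ with $\WC{U'}$. Once this is shown to be natural in the log-pair, the lemma is immediate: admissibility of $\pi$ provides an isomorphism $U' \isom U$, so by the functoriality of Theorem \ref{WCGS} the induced map on weight complexes is an isomorphism in $\Hot(\CorS)$, hence so is $\pi_*$.

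The identification is built by induction on the number $N$ of irreducible components of $Y$. The case $N = 0$ is trivial since then $\Phi_\bullet = X$ and $U = X$. For $N \geq 1$, pick an irreducible (hence smooth) component $Y_N \subset Y$, write $Y^\flat = Y_1 \cup \cdots \cup Y_{N-1}$, and set $V = X - Y^\flat$, $W = Y_N - (Y_N \cap Y^\flat)$, so that $U = V - W$. Then $(X, Y^\flat)$ and $(Y_N, Y_N \cap Y^\flat)$ are log-pairs with open parts $V$ and $W$ respectively, and Lemma \ref{lemPhi2} applied to the divisor $Y_N$ on $(X, Y^\flat)$ yields a natural isomorphism
$$
Cor(\Phi_\bullet) \;\cong\; \mathrm{Cone}\!\left(\iota_* \colon Cor\bigl((Y_N, Y_N \cap Y^\flat)_\bullet\bigr) \longrightarrow Cor\bigl((X, Y^\flat)_\bullet\bigr)\right)
$$
in $\Hot(\CorS)$. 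By induction the source and target are $\WC{W}$ and $\WC{V}$, and the localization triangle of Theorem \ref{WCGS}(2) for $W \hookrightarrow V \hookleftarrow U$ identifies the cone with $\WC{U}$.

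The main obstacle is canonicity: the identification must be independent of the chosen enumeration of components of $Y$ and must be natural under admissible morphisms, which generally do not preserve the number of components (e.g.\ when $\pi$ is a blow-up creating new exceptional divisors above $Y$). Both issues reduce to the canonicity statement of Theorem \ref{WCGS}(1), which is established in \cite{GS} via hyperenvelopes; naturality with respect to a given $\pi$ can then be verified by dominating both log-pairs by a common refinement (for example, by replacing $X'$ by a smooth compactification of the closure of the graph of $\pi|_{U'}$) and comparing both identifications with $\WC{U}$ through this refinement.
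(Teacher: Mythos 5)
Your approach takes a genuinely different route from the paper, but it has a real gap. You reduce everything to the identification $Cor(\Phi_\bullet)\cong \WC{U}$ and to the canonicity and localization triangle of the weight complex (Theorem \ref{WCGS}). However, Theorem \ref{WCGS} is stated and proved in the paper only under the assumption {$\bf(RS)_d$}, since the hyperenvelope construction of $\WC{X}$, its independence of choices, and the localization triangle all rely on resolution of singularities. Lemma \ref{lemPhi1} cannot be allowed to depend on {$\bf(RS)_d$}: it is an essential ingredient of the proof of Theorem \ref{mainthmI} (via Claim \ref{mainclaim}), where only the much weaker hypothesis {$\bf(RES)_{q-2}$} is assumed — that is precisely the point of having \ref{mainthmI} alongside \ref{mainthmII}. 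Your final suggested fix, to dominate the two log-pairs by ``a smooth compactification of the closure of the graph of $\pi|_{U'}$,'' is itself a resolution-of-singularities statement and so does not escape the problem. There is also the secondary issue, which the paper flags explicitly after \eqref{CDloc}, that the cone is not functorial in $\Hot(\CorS)$; your inductive identification passes through cones repeatedly, so the ``canonicity'' you need is not automatic even granting \ref{WCGS}.

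The paper's actual proof avoids all of this by never passing through the weight complex: it applies Gillet--Soul\'e's Theorem~1 directly, which provides an unconditional criterion for a morphism of complexes in $\CorS$ to be a homotopy equivalence (conservativity of the functors $R_{q,\ast,V}$). One then shows that the total complexes of $R_{q,\ast,V}(X,Y)$ and $R_{q,\ast,V}(X',Y')$ are both quasi-isomorphic to $R_{q,\ast}(V\times U)$ via the localization sequence \eqref{eq.4.6} for higher Chow groups, and since $U'=U$ by admissibility, the claim follows with no appeal to resolution. You should replace the weight-complex reduction by this direct argument; in particular, the key input you are missing is that Gillet--Soul\'e's Theorem~1 is a conservativity statement that can be applied to the two complexes $Cor(\Phi_\bullet)$ and $Cor(\Phi'_\bullet)$ themselves, without first matching each with an abstractly defined weight complex.
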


\begin{lem}\label{lemPhi3}
Let $(X,Y;U)$ be a log-pair.
Let $\iota: W\hookrightarrow X$ be a closed irreducible smooth subscheme and
assume that
$(W, W\cap Y)$ is a log-pair.
Let $\pi_X : \tX\to X$ be the blowup of $X$ along $W$, and let
$\tY, E \subset\tX$ be the proper transform of $Y$
and the exceptional divisor, respectively. Let
$$ i_E: E\to \tX, \quad \pi_E:E\to W, \quad i_W:W\to X$$
be the natural morphisms. Then $(\tX,\tY)$ and $(E,E\cap \tY)$
are log-pairs, and there is a natural isomorphism in $\Hot(\ZS)$:
$$ Cone\big( (E,E\cap \tY)_\bullet \rmapo{ ({i_E}_*, -{\pi_E}_*) }
(\tX,\tY)_\bullet \oplus (W,W\cap Y)_\bullet\big)
\rmapou{\pi_{X*} +i_{W*}}{\simeq}  (X,Y)_\bullet. $$
\end{lem}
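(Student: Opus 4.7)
The plan is to verify the claim by induction on the number $N$ of irreducible components of $Y$. First I check that $(\tX, \tY)$ and $(E, E \cap \tY)$ are log-pairs: the log-pair hypothesis on $(W, W \cap Y)$ forces each component $Y_i$ of $Y$ to meet $W$ transversally in a simple normal crossing configuration, which is exactly what is needed for the proper transform $\tY$ to be snc on $\tX$ and for $E$ to meet $\tY$ in an snc configuration on $E$.

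The base case $N = 0$ (i.e.\ $Y = \emptyset$) reduces to the fundamental blow-up formula: for the blow-up square of $X$ along $W$ with exceptional divisor $E$, the natural map
\[
Cone\bigl(E \rmapo{(i_{E*}, -\pi_{E*})} \tX \oplus W\bigr) \rmapo{\pi_{X*} + i_{W*}} X
\]
is an isomorphism in $\Hot(\ZS)$, expressing that the blow-up square is a distinguished (homotopy pushout) square. This step is the main obstacle: since $\ZS$ is merely additive, not abelian, one cannot appeal to kernel/cokernel arguments but must construct a genuine chain-homotopy inverse. My plan here is to adapt the hypercovering construction of Gillet--Soul\'e \cite{GS}, exploiting that $\pi_X$ is an isomorphism over $X \setminus W$ and that $E \to W$ is a projective bundle over the smooth center.

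For the inductive step, write $Y = Y_1 \cup Y'$ with $Y'$ the union of the remaining $N-1$ components. Applying Lemma \ref{lemPhi2} to the smooth prime divisor $Y_1$ of $(X, Y')$ yields
\[
(X, Y)_\bullet \cong Cone\bigl((Y_1, Y_1 \cap Y')_\bullet \rmapo{\iota_*} (X, Y')_\bullet\bigr),
\]
and analogous decompositions hold for $(\tX, \tY)_\bullet$, $(W, W \cap Y)_\bullet$ and $(E, E \cap \tY)_\bullet$ using $\tY_1$, $W \cap Y_1$ and $E \cap \tY_1$ respectively (one checks that these subschemes satisfy the required log-pair hypotheses, which follows from those on $(W, W \cap Y)$). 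Via the iterated-cone identity (octahedral axiom) in the triangulated category $\Hot(\ZS)$, the cone appearing in the lemma identifies with the cone of two simpler blow-up triangles---one for the log-pair $(X, Y')$ with blow-up center $W$, the other for the log-pair $(Y_1, Y_1 \cap Y')$ with blow-up center $W \cap Y_1$---both of which involve only $N-1$ divisor components and hold by the inductive hypothesis. Naturality of the cone construction then yields the desired isomorphism.
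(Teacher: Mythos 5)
Your inductive structure is elegant and morally sound, but it does not actually reduce the difficulty of the lemma: all the mathematical content is concentrated in the base case $Y=\emptyset$, which is precisely the statement that the blow-up square
\[
\begin{CD}
E @>>> \tX \\ @VVV @VVV \\ W @>>> X
\end{CD}
\]
gives a contractible total complex in $\Hot(\ZS)$, and you explicitly defer that to ``adapt the hypercovering construction of Gillet--Soul\'e.'' This is where the proof lives and where your sketch goes astray. Your proposed tools for it---exploiting that $\pi_X$ is an isomorphism over $X\setminus W$ and that $E\to W$ is a projective bundle---point towards a projective bundle/blow-up decomposition, but such decompositions live in $\CorS$ (or in motives), where one has correspondences and Chern classes; in $\ZS$ the morphisms are only $\bZ$-linear combinations of genuine scheme morphisms and there is no projective bundle formula to exploit. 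The paper's proof does not isolate a base case at all: it applies \cite{GS} Theorem 1 immediately, which reduces the full statement to showing that certain complexes of abelian groups $R_{q,\ast,V}(-)$ have quasi-isomorphic cones, and then verifies this by computing both cones as $R_{q,\ast}(V\times -)$ of open complements and observing that $\pi_X$ restricts to an isomorphism $\tX-(E\cup\tY)\cong X-(W\cup Y)$. This one computation subsumes both your base case and your inductive step at once.

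There is also a smaller technical issue in your inductive step. You invoke the ``iterated-cone identity (octahedral axiom) in $\Hot(\ZS)$,'' but cones are not functorial in a homotopy category, so a naive diagram chase with cones of cones does not automatically produce the desired isomorphism. This can be repaired by working in the category $C(\ZS)$ of genuine chain complexes, where $Cone(-)$ is a functor and the iterated-cone identification can be written down explicitly (compare the paper's Lemma \ref{elm.lemma}, which is used exactly for this purpose in the proof of \ref{mainclaim}), but as written the step is not rigorous. In short: the inductive scaffolding is fine, but you have not supplied a proof of the load-bearing base case, and the plan you sketch for it would not work as stated in $\ZS$; you would need to go through the $R_{q,\ast,V}$-machinery of \cite{GS}, at which point the induction becomes unnecessary.
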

\medbreak

Both Lemmas follow from \cite{GS}, theorem 1.
In fact, with the notation of loc. cit. we have a complex of complexes
$$
R_{q,\ast,V}(X,Y): \quad \ldots  \rightarrow  R_{q,\ast}(V\times Y^{(2)}) \rightarrow  R_{q,\ast}(V\times Y^{(1)})  \rightarrow  R_{q,\ast}(V\times X) $$
for every log pair $(X,Y)$, every $q \geq 0$ and every smooth projective variety $V$. For
Lemma \ref{lemPhi1} it suffices to show that the canonical morphism
$R_{q,\ast,V}(X',Y') \rightarrow R_{q,\ast,V}(X,Y)$ induces a quasi-isomorphism
of the associated total complexes for all $q$ and $V$. Then \cite{GS} Theorem 1 (see its Corollary 1) implies the claim.
But it is easy to see that one has an exact sequence for every log pair $(X,Y;U = X - Y)$
\begin{equation}\label{eq.4.6}
\ldots  \rightarrow  R_{q,\ast}(V\times Y^{(2)}) \rightarrow  R_{q,\ast}(V\times Y^{(1)})  \rightarrow  R_{q,\ast}(V\times X) \rightarrow R_{q,\ast}(V\times U) \rightarrow 0 \,,
\end{equation}
i.e., a morphism $totR_{q,\ast,V}(X,Y) \rightarrow R_{q,\ast}(V\times U)$ of complexes
which is a quasi-isomorphism.
Since $X' - Y' = X - Y$ in Lemma \ref{lemPhi1}, the claim follows.

As for Lemma \ref{lemPhi3}, one has a commutative diagram of complexes in $\ZS$:
\begin{equation}\label{eq.4.7.A}
\begin{CD}
(E,E\cap\wt Y)_\bullet @>{i_{E,\ast}}>> (\wt X,\wt Y)_\bullet \\
@VV{\pi_{E,\ast}}V  @VV{\pi_{X,\ast}}V  \\
(W,W \cap Y)_\bullet @>{i_{W,\ast}}>> (X,Y)_\bullet \;,
\end{CD}
\end{equation}
and we have to show that the associated total complex has a contracting homotopy.
By \cite{GS} Thm. 1 it suffices to show that for each $q \geq 0$ and each $V \in \cS$
the induced commutative diagram
\begin{equation}\label{eq.4.7.B}
\begin{CD}
R_{q,\ast,V}(E,E\cap\wt Y) @>{i_{E,\ast}}>> R_{q,\ast,V}(\wt X,\wt Y) \\
@VV{\pi_{E,\ast}}V  @VV{\pi_{X,\ast}}V  \\
R_{q,\ast,V}(W,W \cap Y) @>{i_{W,\ast}}>> R_{q,\ast,V}(X,Y)
\end{CD}
\end{equation}
has the property that the cone of the upper line is
quasi-isomorphic to the cone of the lower line.
\smallskip
But by \eqref{eq.4.6} the upper cone is quasi-isomorphic to the cone of
$$
R_{q,\ast}(V\times (E - (E \cap \wt Y))) \rightarrow R_{q,\ast}(V\times (\wt X - \wt Y))\,,
$$
so by the obvious exact sequence
$$
0 \rightarrow R_{q,\ast}(V\times (E - E \cap \wt Y)) \rightarrow R_{q,\ast}(V\times (\wt X - \wt Y))
\rightarrow R_{q,\ast}(V \times (\wt X - (E \cup \wt Y))) \rightarrow 0
$$
the upper cone is quasi-isomorphic to $R_{q,\ast}(V \times (\wt X - (E \cup \wt Y)))$.
Similarly, the lower cone is quasi-isomorphic to $R_{q,\ast}(V\times (X - (W \cup Y)))$,
and one checks that one has a commutative diagram
$$
\begin{CD}
Cone(i_{E,\ast}) @>>> R_{q,\ast}(V \times (\wt X - (E \cup \wt Y))) \\
@VVV @VVV \\
Cone(i_{W,\ast}) @>>> R_{q,\ast}(V\times (X - (W \cup Y)))\,,
\end{CD}
$$
in which the vertical morphisms are induced by morphisms $\pi_{E,\ast}$ and $\pi_{X,\ast}$
and the functoriality of the Cone on the left, and the projection
$\pi': \wt X - (E \cup \wt Y) \rmapo \sim X - (W \cup Y)$ induced by $\pi_X$ on the right.
Now the claim follows, because $\pi'$ is an isomorphism.

\medskip
Now we start the proof of \ref{mainclaim}.
By {$\bf (RES)_{q-2}$} and \ref{lemPhi1} we may assume that $W$ is regular of
dimension $\leq q-2$
intersecting transversally with $Y$. Consider the following diagrams:
$$
\begin{matrix}
E &\hookrightarrow& \tX &\hookleftarrow& \tY \\
\downarrow&\square&\downarrow\rlap{$\pi$}&&\downarrow\\
W &\hookrightarrow& X &\hookleftarrow& Y \\
\end{matrix}
\quad\text{and}\quad
\begin{matrix}
\tX &\hookleftarrow& \tU &\hookleftarrow& E_U \\
\downarrow&\square&\downarrow&\square&\downarrow\\
X &\hookleftarrow& U &\hookleftarrow& W_U \\
\end{matrix}
$$
where $\tX$ is the blowup of $X$ along $W$, $E$ is the exceptional divisor,
$\tY$ is the proper transform of $Y$ in $X$, $\tU= \tX - \tY$, $W_U=W\cap U$,
$E_U=E\cap \tU$.
Note that $\tY\cup E$ is a simple normal crossing divisor on $\tX$.
By Bertini's theorem (as extended to finite fields by Poonen \cite{P}) we can find
hypersurface sections $H_1,\cdots,H_N\subset \tX$ with $N=\dim(X)-q+1$
(recall that we have assumed $\dim(X)\geq q$)
such that $\tY\cup E\cup H_1\cup\cdots\cup H_N$ is a simple normal crossing
divisor on $\tX$. Then the morphism $E_\nu = E\cap H_1\cap\cdots\cap H_\nu \to W$
is surjective for $\nu = 0, \ldots, N$. In fact, it follows by induction that the
fibers are of dimension $\geq \dim(X) - \nu -1 - \dim(W) = N - \nu + q - 2 - \dim(W)
\geq N - \nu \geq 0$: This holds for $E = E_0$, and if shown for $E_\nu$ with $\nu < N$
it follows for $E_{\nu+1}$, because the fibers of $E_\nu$ are proper of dimension
$> 0$ and contain the fibers of $E_{\nu+1} = E_\nu\cap H_{\nu+1}$ as non-empty
divisors, because $E_\nu - (E_\nu\cap H_{\nu+1})$ is affine, and so are its fibers.
We get the diagram:
$$
\begin{matrix}
\llap{$\tU =:$ }\tU_0 &\hookleftarrow& \tU_1 &\hookleftarrow& \tU_2
&\hookleftarrow& \cdots &\hookleftarrow& \tU_N \\
\downarrow&&\downarrow&&\downarrow&&\hbox{}&&\downarrow \\
\llap{$\tX =:$ }\tZ_0 &\hookleftarrow& \tZ_1 &\hookleftarrow& \tZ_2
&\hookleftarrow& \cdots &\hookleftarrow& \tZ_N \\
\downarrow&&\downarrow&&\downarrow&&\hbox{}&&\downarrow \\
\llap{$X =:$ }Z_0 &\hookleftarrow& Z_1 &\hookleftarrow& Z_2 &\hookleftarrow&
\cdots &\hookleftarrow& Z_N  \\
\uparrow&&\uparrow&&\uparrow&&\hbox{}&&\uparrow \\
\llap{$U =:$ }U_0 &\hookleftarrow& U_1 &\hookleftarrow& U_2 &\hookleftarrow&
\cdots &\hookleftarrow& U_N  \\
\end{matrix}
$$
where $\tZ_\nu=H_1\cap\cdots\cap H_\nu$, $Z_\nu$ is its image in $X$,
$\tU_\nu=\tZ_\nu\cap \tU$, $U_\nu=Z_\nu\cap U$ for $1\leq \nu \leq N$.
Let
$$\tY_\nu=\tZ_\nu \cap \tY, \; \; E_\nu=\tZ_\nu\cap E \;\; (0\leq \nu\leq N),\;\;
V_\nu=\tZ_\nu-(\tZ_{\nu+1}\cup \tY_\nu\cup E_\nu) \;\; (0\leq \nu\leq N-1).$$
We note that $V_\nu=U_\nu - U_{\nu+1}$ for $0\leq \nu \leq N$ and that
$\dim(Z_\nu)=q+N-\nu-1$ and that $Z_\nu$ is regular off $W$ but may be
singular along it.
We have the following log pairs:
$$
\Psi_\nu=(\tZ_\nu, \tZ_{\nu+1}\cup \tY_\nu\cup E_\nu; V_\nu), \;
(\tZ_\nu, \tY_\nu; \tU_\nu), \;
(E_\nu, E_\nu \cap \tY; E_\nu\cap \tU), \;
(W, W \cap Y; W\cap U).
$$
We claim that $\Psi_0=(\tX,\tZ_1\cup \tY\cup E)$ satisfies the desired
properties of \ref{mainclaim}.
Indeed \ref{mainclaim}(1) follows from the assumption {\bf (L)} since
$H_1=\tZ_1$ is an
ample divisor on $\tX=\tZ_0$.
\ref{mainclaim}(2) follows from the fact that $W\subset Z_N$ and
$V_0=U\backslash Z_1$.
It remains to show \ref{mainclaim}(3). We set
$$
{\Phi_\nu}_\bullet =
Cone\big((E_\nu,E_\nu\cap \tY)_\bullet
\rmapo{ ({i_{E_\nu}}_*, {\pi_{E_\nu}}_*) } (\tZ_\nu,\tY_\nu)_\bullet
\oplus (W,W\cap Y)_\bullet \big),
$$
where
$i_{E_\nu} : E_\nu\to \tZ_\nu$ and $\pi_{E_\nu} : E_\nu \to W$ are the natural
morphisms. There is a natural morphism
\begin{equation}\label{mainclaim1}
\KCLH {{\Phi_\nu}_\bullet} \isom \KCLH {U_\nu}
\end{equation}
which is a quasi-isomorphism for $0 \leq n \leq N$. In fact, we have a commutative diagram
$$
\begin{CD}
\KCLH {(E_\nu,E_\nu\cap \tY)_\bullet} @>{{i_{E_\nu}}_\ast}>> \KCLH {(\tZ_\nu,\tY_\nu)_\bullet} \\
@V{{\pi_{E_\nu}}_\ast}VV @VV{{\pi_{Z_\nu}}_\ast}V \\
\KCLH {(W,W\cap Y)_\bullet} @>{{i_{W_\nu}}_\ast}>> \KCLH {Z_\nu,Z_\nu\cap Y} \;,
\end{CD}
$$
and, by \eqref{KCFU}, the upper row is quasi-isomorphic to
$$
\KCLH {E_\nu - (E_\nu\cap\tY_\nu)} \rightarrow \KCLH {\tZ_\nu - \tY_\nu}\;,
$$
while the lower row is quasi-isomorphic to
$$
\KCLH {W - (W \cap Y)} \rightarrow \KCLH {U_\nu}\;.
$$
Now the claim follows, because by \eqref{KCexactsequence} the associated total complexes are quasi-isomorphic to
$ \KCLH {\tZ_\nu - (E_\nu\cup \tY_\nu)}$ and $\KCLH {Z_\nu - (W\cup Y_\nu)}$,
respectively, and $\pi$ induces an isomorphism $\tZ_\nu - (E_\nu\cup \tY_\nu) \cong
Z_\nu - (W\cup Y_\nu)$.

By \ref{lemPhi3} we have the natural isomorphism
\begin{equation}\label{mainclaim2}
{\Phi_0}_\bullet  \isom (X,Y)_\bullet \quad \text{in } \Hot(\ZS).
\end{equation}
Moreover we claim that there are natural isomorphisms
\begin{equation}\label{mainclaim3}
Cone\big( {\Phi_{\nu+1}}_\bullet \rmapo{\iota_*\oplus id_W}
{\Phi_\nu}_\bullet \big) \isom {\Psi_\nu}_\bullet
\end{equation}
in the category $C(\ZS)$ of complexes in $\ZS$ where
$\iota :\tZ_{\nu+1}\to \tZ_\nu$ is the natural morphism.
Indeed, for a morphism of complexes $f: A \rightarrow B$ call the natural sequence
of complexes $A \rmapo f B \rightarrow Cone(f)$ a cone sequence. Then we have
the following commutative diagram in $C(\ZS)$:
$$
\begin{CD}
(E_{\nu+1}, E_{\nu+1}\cap \tY)_\bullet @>>> (\tZ_{\nu+1},\tY_{\nu+1})_\bullet
\oplus (W,W\cap Y)_\bullet @>>> {\Phi_{\nu+1}}_\bullet \\
@VVV  @VVV @VVV \\
(E_\nu, E_\nu\cap \tY)_\bullet @>>>  (\tZ_\nu,\tY_\nu)_\bullet \oplus
(W,W\cap Y)_\bullet @>>>
{\Phi_{\nu} }_\bullet \\
@VVV  @VVV @. \\
(E_\nu,  (E_\nu\cap \tY)\cup E_{\nu+1} )_\bullet @>>>  (\tZ_\nu, \tY_\nu \cup
\tZ_{\nu+1})_\bullet @>>>
  (\tZ_\nu, \tY_\nu \cup \tZ_{\nu+1} \cup E_\nu )_\bullet \\
\end{CD}
$$
where the two left vertical sequences and the bottom horizontal sequence are cone sequences
by \ref{lemPhi2} and by noting that
$(E_\nu\cap \tY)\cup E_{\nu+1} = E_\nu\cap (\tY_\nu \cup \tZ_{\nu+1}) $.
Now \eqref{mainclaim3} follows from the following elementary lemma.

\begin{lem}\label{elm.lemma}
Consider a diagram of cone sequences (in any additive category $\cA$)
$$
\begin{CD}
A @>{f}>> B @>>> Cone(f) \\
@VV{a}V @VV{b}V @V{c}VV \\
A' @>{f'}>> B' @>>> Cone(f') \\
@VVV @VVV @. \\
Cone(a) @>{d}>> Cone(b) @.
\end{CD}
$$
in which the morphisms $c$ and $d$ come from the functoriality of the cone.
Then there is a canonical isomorphism $Cone(c) \rmapo \sim Cone(d)$ in the
category of complexes in $\cA$.
\end{lem}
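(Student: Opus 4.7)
The key observation is that both $Cone(c)$ and $Cone(d)$ are two canonical models for the totalization of the commutative square $(f,f',a,b)$ regarded as a bicomplex supported on the four corners $A, A', B, B'$. Concretely, in degree $n$ both cones have the same underlying object
$$
T_n \;=\; A_{n-2}\oplus A'_{n-1}\oplus B_{n-1}\oplus B'_n,
$$
only grouped differently: $Cone(c)_n=(A_{n-2}\oplus B_{n-1})\oplus(A'_{n-1}\oplus B'_n)$ reflects its construction as the cone of a map between the two horizontal cones, while $Cone(d)_n=(A_{n-2}\oplus A'_{n-1})\oplus(B_{n-1}\oplus B'_n)$ reflects its construction as the cone of a map between the two vertical cones. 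The plan is therefore to write down both differentials as $4\times 4$ block matrices on $T_n$ and exhibit an isomorphism between them by a canonical permutation-with-signs of the summands.

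To carry this out I would twice expand the standard cone differential $d_{Cone(g)}=\left(\begin{smallmatrix}-d_{\rm source} & 0\\ g & d_{\rm target}\end{smallmatrix}\right)$ on each side. The result in both cases is a $4\times 4$ block matrix on $T_n$ whose diagonal entries are signed copies of the internal differentials $d_A, d_{A'}, d_B, d_{B'}$, whose off-diagonal entries come from $f, f', a, b$ (with signs dictated by the cone convention), and which contains a single occurrence of the composition $bf=f'a$ arising from the commutativity of the original square. A direct inspection shows that the two matrices coincide after swapping the middle two blocks $A'_{n-1}$ and $B_{n-1}$ and adjusting signs on one of them. This prescription defines the required isomorphism $\varphi\colon Cone(c) \isom Cone(d)$ degreewise, and it commutes with the differentials precisely because $bf=f'a$.

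The only real obstacle is bookkeeping: the two nested cone conventions produce several sign factors that must be tabulated consistently, and different sources use different conventions for the cone differential. Once a convention is fixed and applied uniformly on both sides, the calculation above shows that $\varphi$ is a chain isomorphism. Because $\varphi$ is built entirely from identity morphisms on the four corners $A, A', B, B'$, it is manifestly natural in the square $(f,f',a,b)$ and in particular canonical, which is the assertion of the lemma.
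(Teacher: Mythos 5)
Your proof is correct and takes essentially the same route as the paper's: write both iterated cones in degree $n$ as the same four-term direct sum $A^{n+2}\oplus A'^{n+1}\oplus B^{n+1}\oplus B'^{n}$ (up to ordering) and exhibit the isomorphism as a transposition of the two middle summands together with a sign. The paper simply states the formula $(b',a',b,a)\mapsto(b',b,a',-a)$; note that in its convention the sign falls on the $A$-summand rather than on one of the two swapped middle blocks as you claim, so the sentence ``adjusting signs on one of them'' should be loosened to ``adjusting a sign on one of the four blocks, whose location depends on the cone convention.''
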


To wit: In degree $n$ it is $(B')^n \oplus (A')^{n+1} \oplus B^{n+1} \oplus A^{n+2}
\rightarrow (B')^n \oplus B^{n+1} \oplus (A')^{n+1} \oplus A^{n+2}$ given by
``$(b',a',b,a) \mapsto (b',b,a',-a)$''.

\medbreak
By
\eqref{mainclaim3},
we get the following commutative diagram with exact rows
(the coefficients $\Lam_H$ are omitted):
$$
\begin{matrix}
\cdots & \KH {i+1} {V_\nu} &\to& \KH {i} {U_{\nu+1}} &\to& \KH {i} {U_\nu}  &\to&
\KH {i} {V_\nu} &\cdots\\
&\downarrow&&\downarrow&&\downarrow&&\downarrow\\
\cdots & \KH {i+1} {{\Psi_\nu}_\bullet} &\to& \KH {i} {{\Phi_{\nu+1}}_\bullet} &\to& \KH {i} {{\Phi_\nu}_\bullet} &\to&
\KH {i} {{\Psi_\nu}_\bullet} &\cdots \\
&\downarrow&&\downarrow&&\downarrow&&\downarrow\\
\cdots & \graphHempty {i+1} {{\Psi_\nu}_\bullet} &\to&
\graphHempty i {{\Phi_{\nu+1}}_\bullet} &\to&
\graphHempty i {{\Phi_\nu}_\bullet} &\to&
\graphHempty i {{\Psi_\nu}_\bullet} &\cdots \;.\\
\end{matrix}
$$
Here the upper vertical maps come from the quasi-isomorphism \eqref{KCFU} and \eqref{mainclaim1},
and the upper long exact sequence comes from the exact sequence of complexes
$$
0 \rightarrow \KC {U_{\nu+1}} \rightarrow \KC {U_\nu} \rightarrow \KC {V_\nu} \rightarrow 0
$$
due to and \eqref{KCexactsequence} and the fact that $V_\nu = U_\nu - U_{\nu+1}$.

By composing with the edge homomorphisms \eqref{edgehom} we get
the commutative diagram with exact rows
\begin{footnotesize}
$$
\CD
H_{i+1}(U_\nu) @>>>H_{i+1}(V_\nu) @>>> H_{i}(U_{\nu+1}) @>>> H_{i}(U_\nu)
@>>> H_{i}(V_\nu) \\
@VV{\graphhom {i+1} {\Phi_\nu} }V
@VV{\graphhom {i+1} {\Psi_\nu} }V @VV{\graphhom {i} {\Phi_{\nu+1}} }V
@VV{\graphhom {i} {\Phi_\nu} }V @VV{\graphhom {i} {\Psi_\nu} }V \\
\graphHempty {i+1} {{\Phi_\nu}_\bullet } @>>>
\graphHempty {i+1} {{\Psi_\nu}_\bullet } @>>>
\graphHempty i {{\Phi_{\nu+1}}_\bullet } @>>>
\graphHempty i {{\Phi_\nu}_\bullet } @>>>
\graphHempty i {{\Psi_\nu}_\bullet }  \\
\endCD
$$
\end{footnotesize}
In view of \eqref{mainclaim2} the assumption that $(X,Y)$ is $H$-clean in
degree $q-1$ implies that
$\graphhom i {\Phi_0}$ is injective for $i=q-1$ and surjective for $i=q$.
Since $\Psi_\nu$ is ample and $\dim(V_\nu)\geq q$ ($\nu\leq N-1$),
$\graphhom i {\Psi_\nu}$ is an isomorphism for $i\leq q$ and surjective
for $i=q+1$ by the assumption {\bf (L)}. The diagram chase now shows that
for all $q$ with $0\leq \nu\leq N$, $\graphhom i {\Phi_\nu}$ is injective
for $i=q-1$ and surjective for $i=q$. Then the following facts hold:
\begin{enumerate}
\item[$(*1)$]
$\Th l {V_\nu}=0$ for all $l\geq 1$ and for all
$1\leq \nu\leq N-1$.
\item[$(*2)$]
$\Th l {V_0} =0$ for all $l\geq t+1$,
\item[$(*3)$]
$\Th l {U_N}=0$ for all $l\geq 1$.
\end{enumerate}
$(*1)$ and $(*2)$ follow from the induction hypothesis $(**)$ by noting
that $\dim(V_\nu)<\dim(U)$ if $\nu\geq 1$.
$(*3)$ holds since $\dim(U_N)=q-1$ and $\gamma^{q-1}_{\Phi_N}$ is injective
(cf. the argument in the first step of the induction).
Recall $V_\nu=U_\nu - U_{\nu+1}$ for $0\leq \nu \leq N$.
By the fundamental lemma \ref{FL}, $(*1)$ and $(*3)$ imply that
$\Th l {U_\nu}=0$ for $\forall l\geq 1$ and for $1\leq\forall \nu\leq N$.
By \ref{FL} this assertion for $\nu=1$ together with $(*2)$ implies the
injectivity of
$\Th t {U} \to \Th t {V_0}$, which proves \ref{mainclaim}(3).

\bigskip

\section{Results with finite coefficients}
\bigskip

The main results in \S3 show, under the assumption of resolution of
singularities, the vanishing of the Kato homology of a projective
smooth variety for a certain homology theory with infinite coefficient
module $\Linfty$ (see \ref{exHK3}).
In this section we improve it to the case of finite coefficient modules $\Ln$.

Fix a rational prime $\ell$. Assume given an inductive system of
homology theories:
$$H=\{\HLLn,\iota_{m,n}\}_{n\geq 1},$$
where $H(-,\Ln)$ are homology theories leveled above $e$ on
$\cC$, a category of schemes over the base $B=\Spec(k)$.
It gives rise to a homology theory
$$ H(-,\Linfty)\;:\; X \to H_{a}(X,\Linfty):=\indlim {n\geq 1} H(X,\Ln)
\qfor X\in Ob(\cC)$$
with
$\iota_n: H(-,\Ln) \to H(-,\Linfty)$, a functor of homology theories.
We assume that it induces an exact sequence for each $n\geq 1$:
\begin{equation}\label{LHes}
0 \to H_0(B,\Ln) @>{\iota_n}>> H_0(B,\Linfty) @>{\nt}>> H_0(B,\Linfty)\to 0.
\end{equation}
We further assume given, for each integer $n\geq 1$,
a map of homology theories of degree $-1$
\begin{equation}\label{boundaryL}
\partial_n\;:\; H(-,\Linfty) \to H(-,\Ln)
\end{equation}
such that for any $X\in Ob(\cC)$ and for any integers $m>n$,
we have the following commutative diagram of exact sequences
\begin{equation}\label{ESfinite}
\begin{CD}
0 @>>> \Hinfty {q+1} X/\nt @>{\partial_n}>> \Hn q X
@>{\iota_n}>> \Hinfty q X [\nt] @>>> 0 \\
@. @| @VV{\iota_{m,n}}V @|\\
0@>>> \Hinfty {q+1} X/\mt @>{\partial_m}>> \Hn q X
@>{\iota_m}>> \Hinfty q X [\mt] @>>> 0. \\
\end{CD}
\end{equation}
\medbreak

We let $\KHn a X$ and $\KHinfty a X$ denote the Kato homology
associated to $H(-,\Ln)$ and $H(-,\Linfty)$ respectively. By definition
$\KHinfty a X =\indlim {n\geq 1} \KHn a X.$

\begin{rem}
The homology theories $\{\H^{\et}(-,\Ln)\}_{n\geq 1}$ and
$\{\H^{D}(-,\Ln)\}_{n\geq 1}$ in \ref{exHK3} satisfy
the above assumption.
\end{rem}
\medbreak

We now consider the following condition for $H(-,\Linfty)$:

\begin{enumerate}
\item[{$\Dql$}] :
For any $X\in Ob(\cC)$ which is connected regular of dimension $q$
with $\eta\in X_q$, the generic point,
$\Hinfty {q-e+1} \eta$ is divisible by $\ell$.
\end{enumerate}

\begin{rem}
\par\noindent
\begin{itemize}
\item[(1)]
For the homology theory in \ref{exHK1} the condition {$\Dql$}
is implied by the Bloch-Kato conjecture. We will explain this later
in this section.
\item[(2)]
In view of \eqref{ESfinite} {$\Dql$} is equivalent to the injectivity
of $ \HLn {q-e} {\eta} \to \Hinfty {q-e} {\eta}$, which implies
the injectivity of $\KHn q X \to \KHinfty q X$ for $X$ connected regular
of dimension $q$ since by definition $\KHn q X$ is a subgroup of
$\Hn {q-e} \eta$.
\end{itemize}
\end{rem}
\medbreak

Let
$$
E^1_{a,b}(X,\Ln)=\bigoplus_{x\in X_a}H_{a+b}(x,\Ln)~~\Rightarrow ~~
H_{a+b}(X,\Ln)
$$
be the niveau spectral sequence associated to $H(-,\Ln)$.

\begin{thm}\label{mainthmIfinite}
Let $q,d\geq 1$ be integers.
Assume that $H=H(-,\Linfty)$ satisfies {$\bf(L)$} and {$\Dql$}.
Assume either {$\bf(RES)_{q-2}$} or {$\bf(RS)_{d}$}.
\begin{itemize}
\item[(1)]
Let $\Phi=(X,Y;U)$ be a log-pair with $\dim(X)\leq d$ and assume that it is
$H$-clean in degree $q-1$and $q$. Then we have for any integer $n\geq 0$
$$
\ZBUn ab \infty {b+e} =0 \quad\text{if $a+b=q-1-e$ and $b\geq 1-e$}.
$$
\item[(2)]
For any $X\in Ob(\cS)$ of dimension $\leq d$,
$ \KHn q X =0 $ for any integer $n\geq 0$.
\end{itemize}
\end{thm}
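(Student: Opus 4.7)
The plan is to deduce both parts from the corresponding $\Linfty$-statements (Theorem~\ref{mainthmIv} and Theorem~\ref{mainthmII}) by exploiting the Bockstein short exact sequence \eqref{ESfinite} together with the divisibility condition $\Dql$ to descend from $\Linfty$ to $\Ln$. For part~(1), Theorem~\ref{mainthmIv} applied to $H(-,\Linfty)$ supplies the analogous vanishing
$$\ZBUinfty a b \infty {b+e}=0\qfor a+b=q-1-e,\ b\geq 1-e,$$
because the cleanness in degree $q-1$ used there is part of our stronger hypothesis.

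To pass to $\Ln$-coefficients, I would compare the two niveau spectral sequences through the morphism $\iota_n\colon E^r_{a,b}(U,\Ln)\to E^r_{a,b}(U,\Linfty)$. At the $E^1$-page, \eqref{ESfinite} evaluated at each $x\in U_a$ yields a short exact sequence
$$0\to \Hinfty {a+b+1} x /\ell^n \to \Hn {a+b} x \to \Hinfty {a+b} x [\ell^n]\to 0,$$
and $\Dql$ applied to a smooth open of the $a$-dimensional scheme $\ol{\{x\}}$ makes $\Hinfty {a-e+1} x$ divisible by $\ell$; specialising to $b=-e$ this yields a termwise injection of Kato complexes $\KCn{U}\hookrightarrow\KCinfty{U}$. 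The core of the argument is then a snake-lemma chase in a bicomplex built from the two spectral sequences to lift the $\Linfty$-vanishing of the subquotients $\ZU a b \infty/\BU a b {b+e}$ to $\Ln$-coefficients in the range $b\geq 1-e$. The additional hypothesis of cleanness in degree $q$ enters crucially here: it is what controls the extra Bockstein kernel $\Hinfty {a+b+1} x/\ell^n$ on the neighbouring row and guarantees that the lift extends through all relevant spectral sequence pages.

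For part~(2), given $X\in\cS$ of dimension $\leq d$, I would follow the geometric reductions in the proofs of Theorem~\ref{mainthmI} and Theorem~\ref{mainthmII}: use Bertini (\cite{P} over a finite field) to produce an ample log-pair $(X,Y;U)$, whose cleanness in all degrees $\leq\dim(X)$ for $H(-,\Linfty)$ is provided by Lemma~\ref{lemL1}, and which transfers to $\Ln$ via the comparison just established. Part~(1) applied to this pair yields $\ZBUn a b \infty {b+e}=0$ for every $(a,b)$ with $a+b=q-1-e$, $b\geq 1-e$, so that every differential $d^r$ out of $E^r_{q,-e}(U,\Ln)$ for $r\geq 2$ vanishes; combining this with the localisation sequence \eqref{KCexactsequence} for $Y\hookrightarrow X\hookleftarrow U$ and the cleanness of $(X,Y;U)$ in degree $q$ gives $\KHn q X=0$. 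The main obstacle is the snake-lemma step: one must simultaneously track the $E^r$-filtrations on both sides and verify that the Bockstein kernel is absorbed by the range controlled by the extra cleanness hypothesis, so that the vanishing descends cleanly from $\Linfty$ to $\Ln$-coefficients.
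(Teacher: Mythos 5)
The overall plan — bootstrap the $\Ln$-vanishing from the $\Linfty$-case via the Bockstein sequence \eqref{ESfinite}, using $\Dql$ to make the Bockstein injective and the extra cleanness in degree $q$ to control the new terms — is the right idea and is indeed what the paper does, but your execution contains two real gaps.

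First, you write that applying $\Dql$ to a smooth open of the $a$-dimensional scheme $\ol{\{x\}}$ makes $\Hinfty{a-e+1}{x}$ $\ell$-divisible and so produces a \emph{termwise} injection $\KCn U \hookrightarrow \KCinfty U$. But $\Dql$ is stated only for connected regular schemes of dimension exactly $q$; to get divisibility at an $x\in U_a$ for $a\neq q$ you would need $\text{\bf (D)}_{a,\ell}$, which is not among the hypotheses. The paper's proof never asserts such a termwise injection: it uses $\Dql$ in exactly one place, namely to get the single injection $\KHn q U \hookrightarrow \KHinfty q U$ for $\dim(U)=q$, where the Kato homology in degree $q$ sits inside $\Hn{q-e}{\eta}$ with $\eta$ the generic point of a $q$-dimensional variety.

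Second, the crucial step you describe as ``a snake-lemma chase in a bicomplex built from the two spectral sequences'' is left entirely unspecified, and you acknowledge it as the main obstacle. It is in fact where the work lies: the groups $\ZUn a b \infty/\BUn a b {b+e}$ are delicate subquotients of the $E^1$-page that do not fit into any short exact sequence with their $\Linfty$-analogues, so there is no direct Bockstein/snake argument connecting them. What the paper does instead is \emph{reduce to the case} $\dim(U)=q$ (the case $\dim(U)\geq q+1$ then follows by the same induction used in Theorems \ref{mainthmIv}/\ref{mainthmIIv}), and in that top-dimensional case observes that the required vanishing is equivalent to two concrete assertions: (a) the graph-edge map $\graphedgen{q-1}{\Phi}\colon \Hn{q-1}U\to \graphHn{q-1}{\Phi_\bullet}$ is injective, and (b) the edge map $\Hn q U \to \KHn q U$ is surjective. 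Assertion (a) is read off from the Bockstein diagram using cleanness in degrees $q-1$ and $q$ for $H(-,\Linfty)$; assertion (b) is proved by comparing $\KHn q U$ with $\KHinfty q U[\ell^n]$, where the isomorphism $\KHinfty q U\cong\graphHinfty q{\Phi_\bullet}$ from Theorems \ref{mainthmI}/\ref{mainthmII} and the injectivity provided by $\Dql$ combine to win. Your proposal does not identify either assertion, nor the reduction to $\dim(U)=q$ that makes them sufficient. For part~(2) you also omit the induction on $\dim(X)$ needed to get $\KHn q Y=0$ for the smooth hyperplane section $Y$ and hence $\KHn q X\hookrightarrow \KHn q U$ from \eqref{KCexactsequence}; without it, the localisation step you invoke does not close.
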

\begin{proof}
By shift of degree we may assume $e=0$.
First we prove (1). Recall that the cleanness of $\Phi$ in degree $q$ implies
$q\leq \dim(U)$. Once (1) is shown in case $\dim(U)=q$, then the case
$\dim(U)\geq q+1$ is shown by the same argument as in the proof of
\ref{mainthmIv} and \ref{mainthmIIv}. We thus treat the case $\dim(U)=q$.
It suffices to show that
$$ \graphedgen {q-1} \Phi \;:\; \Hn {q-1} U \to \graphHn {q-1} {\Phi_\bullet}$$
is injective and that the edge homomorphism
$$ \epsilon^q_U  \;:\; \Hn q U \to \KHn q U $$
is surjective. To show the first assertion, we consider the commutative daigram
$$
\begin{CD}
0 @>>> \Hinfty {q} U /\nt @>{\partial_n}>> \Hn {q-1} U @>>>
\Hinfty {q-1} U [\nt] @>>> 0 \\
@. @V{\simeq}V{\graphedge {q} \Phi}V @VV{\graphedge {q-1} \Phi}V
@V{\hookrightarrow}V{\graphedge {q-1} \Phi}V \\
0@>>> \graphHinfty {q} {\Phi_\bullet}/\nt @>>> \graphHn {q-1} {\Phi_\bullet} @>>> \graphHinfty {q-1} {\Phi_\bullet}[\nt] @>>> 0\\
\end{CD}
$$
Here $\graphHn \bullet {-}$ and $\graphHinfty \bullet {-}$ are
the graph homologies associated to $\HLLn$ and $\HLLinfty$
(cf. \eqref{graphhomX}), respectively, and the lower exact sequence comes from \eqref{LHes}.
The commutativity of the left square follows from the assumption that
$\partial_n$ (cf. \eqref{boundaryL}) is a map of homology theories.
The left vertical arrow is an isomorphism and the right vertical one is
injective by the assumption that $\Phi$ is $H$-clean in degree $q-1$ and $q$.
The desired assertion follows from this.
In order to show the second assertion, we consider the commutative diagram
$$
\begin{CD}
\Hn q U @>\alpha>> \Hinfty q U [\nt]  \\
@VV{\epsilon^q_U}V @V{\simeq}V{\epsilon^q_U} V \\
 \KHn q U @>\beta>> \KHinfty q U [\nt] \\
\end{CD}
$$
The right vertical arrow is an isomorphism due to \ref{mainthmI} and
\ref{mainthmII} in view of the assumption that $\Phi$ is clean in degree $q$.
The map $\alpha$ is surjective by \eqref{ESfinite}. Noting $\dim(U)=q$,
{$\Dql$} implies that $\beta$ is injective. This shows the desired
surjectivity.

We now deduce \ref{mainthmIfinite}(2) from (1).
We may assume that $X$ is connected of dimension $\geq q$. Assume $\dim(X)=q$.
{$\Dql$} implies $\KHn q X\hookrightarrow \KHinfty q X$ and thus the
assertion follows from \ref{mainthmI} and \ref{mainthmII}.
Assume $\dim(X)> q$ and proceed by induction on $\dim(X)$.
Let $Y\subset X$ be a smooth hyperplane section and
consider the log-pair $\Phi=(X,Y;U)$ with $U=X-Y$. By induction $\KHn q Y=0$
and the exact sequence \eqref{KCexactsequence} implies
$\KHn q X\hookrightarrow \KHn q U$. Thus it suffices to show
$\KHn q U=0$. Since $\Phi$ is clean in degree $q-1$ and $q$ by {$\bf (L)$},
\ref{mainthmIfinite}(1) implies the edge homomorphism
$\Hn q U \to \KHn q U$ is surjective so that it suffices to show
$\Hn q U=0$. Consider the commutative diagram
$$
\begin{CD}
0 @>>> \Hinfty {q+1} U /\nt @>{\partial_n}>> \Hn q U @>>> \Hinfty q U [\nt]
 @>>> 0 \\
@. @V{\simeq}V{\graphedge {q+1}\Phi}V @VV{\graphedge {q}\Phi}V
@V{\simeq}V{\graphedge {q}\Phi}V \\
0@>>> \graphHinfty {q+1} {\Phi_\bullet}/\nt @>>> \graphHn q {\Phi_\bullet} @>>> \graphHinfty q {\Phi_\bullet}[\nt] @>>> 0\\
\end{CD}
$$
The left and right vertical arrows are isomorphisms by {$\bf (L)$}
and the assumption $\dim(U)=\dim(X) > q$.
By definition $\graphHn a {\Phi_\bullet}=0$ for all $a\geq 1$.
This shows the desired assertion and completes the proof of
\ref{mainthmIfinite}. $\square$
\end{proof}
\bigskip

In the rest of this section we consider the homology theory in
Example \ref{exHK1}:
We take the base $B=\Spec(F)$ for a finite field $F$.
For an integer $n\geq 0$ define
$$
H^{\et}_a(X,\nz):=H^{-a}(X_{\et}, R\,f^{!}\nz)
\qfor f:X\rightarrow B \text{ in } \cC.
$$
This homology theory is leveled above $e=1$ and the Kato complex $\KC X$ for
$X\in Ob(\cC)$ is the complex \eqref{KCfinitefieldintro} in Introduction.
We have
\begin{equation}\label{Hetsmooth}
\H^{\et}_a(X,\nz)= H^{2q-a}(X,\nz(q))
\text{ for $X\in Ob(\cC)$ smooth over $B$ of dimension $q$}.
\end{equation}
\medbreak

Now apply Theorem \ref{mainthmIfinite} to the inductive system
$\{H^{\et}(-,\Ln)\}_{n\geq 1}$ with $\Ln=\lnz$.
By \eqref{Hetsmooth}, if $X$ is regular and connected with $\eta\in X_q$,
the generic point, we have
$$
\Hetinfty {q-e+1} \eta=\Hetinfty {q} \eta= H_{\et}^q(\eta,\qzl(q)):=
\indlim n H_{\et}^q(\eta,\lnz(q)).$$
One easily sees that the surjectivity of the symbol map for a field $L$:
$$h^q_{L,\ell} : K^M_q(L) \to H^q(L,\lz(q))$$
implies $H^q(L,\qzl(q))$ is $l$-divisible.
Hence condition {$\BKlz {q}$} in the introduction implies {$\Dql$} in this case.
Therefore \ref{mainthmIfinite} implies the following:

\begin{thm}\label{mainthmKC1}
Let $X$ be projective smooth of dimension $d$ over a finite field $F$.
Let $t\geq 1$ be an integer.
Assume either $t\leq 4$ or {$\RS {d}$}, or {$\RES {t-2}$}.
Assume further $\BKlz t$. Then we have for any integer $n\geq 0$
$$
\KHetlnz a X  \simeq \left.\left\{\gathered
 \lnz \\
 0 \\
\endgathered\right.\quad
\aligned
&\text{$a=0$}\\
&\text{$0< a \leq t$}
\endaligned\right.
$$
\end{thm}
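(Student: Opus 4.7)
The plan is to apply Theorem~\ref{mainthmIfinite} to the inductive system of \'etale homology theories $\{H^{\et}(-,\lnz)\}_{n\geq 1}$ of Example~\ref{exHK1}, whose colimit is $H^{\et}(-,\qzl)$ and which is leveled above $e=1$. The resolution-of-singularities input is inherited from the assumption ($\RS{d}$ or $\RES{t-2}$), and the unconditional case $t\leq 4$ is covered because $\RES{2}$ is available from \cite{CJS}.

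First I would verify the two standing hypotheses of Theorem~\ref{mainthmIfinite} for $H^{\et}(-,\qzl)$. The Lefschetz condition $\bf(L)$ is furnished by Lemma~\ref{lemL2}: when $\ell\neq p=\ch(F)$ take $T=\zl$ with trivial $G_F$-action, which is pure of weight $0$ (hence mixed of weights $\leq 0$) and trivially satisfies $T^{G_K}=T^{G_L}$ for every finite separable extension; when $\ell=p$ we are in case $(b)$ of Example~\ref{exHK3}. For the divisibility condition $\Dql$ one needs $H^q(\k(\eta),\qzl(q))$ to be $\ell$-divisible at every generic point $\eta$ of a connected regular $q$-dimensional $F$-scheme. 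As observed in the paragraph preceding the theorem, the Kummer sequence $0\to\lnz(q)\to\qzl(q)\to\qzl(q)\to 0$ reduces this to the surjectivity of the symbol map $h^q_{L,\ell}$, i.e.\ to $\BKlz{q}$; for $1\leq q\leq t$ this is supplied by $\BKlz{t}$, using the standard fact that the Bloch--Kato statement in degree $t$ implies it in all lower degrees (and being unconditional in the case $\ell=p$ by Bloch--Gabber--Kato).

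With both hypotheses verified, Theorem~\ref{mainthmIfinite}(2) immediately gives $\KHetlnz{a}{X}=0$ for every $1\leq a\leq t$ and every $n\geq 0$. For the degree-$0$ assertion, the same $\ell$-divisibility at the generic points makes the coefficient triangle $0\to\lnz\to\qzl\xrightarrow{\ell^n}\qzl\to 0$ yield a termwise short exact sequence of Kato complexes, whose associated long exact sequence on Kato homology, combined with Theorem~\ref{thm.CTSJS} ($\KHetqlz{1}{X}=0$ and $\KHetqlz{0}{X}\cong\qzl$ for $X$ connected), collapses to
\begin{equation*}
0 \to \KHetqlz{1}{X}/\ell^n \to \KHetlnz{0}{X} \to \KHetqlz{0}{X}[\ell^n] \to 0,
\end{equation*}
and produces $\KHetlnz{0}{X}\cong\qzl[\ell^n]=\lnz$.

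The heavy structural work has already been absorbed by Theorems~\ref{mainthmI}, \ref{mainthmII}, and \ref{mainthmIfinite}; the main technical point specific to the present statement is verifying $\Dql$ uniformly from $\BKlz{t}$, which rests on the Kummer sequence and on the standard reductions within the Bloch--Kato package.
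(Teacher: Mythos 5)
Your plan is essentially the proof the paper gives (which is quite terse: the paper just says ``apply Theorem~\ref{mainthmIfinite}'' after noting that $\BKlz q$ implies $\Dql$). You correctly verify the two hypotheses of Theorem~\ref{mainthmIfinite}: the Lefschetz condition $\bf(L)$ via Lemma~\ref{lemL2} with $T=\zl$ trivial (pure of weight $0$, so mixed of weights $\leq 0$, and with the invariant condition holding trivially), resp.\ case $(b)$ of \ref{exHK3} when $\ell=p$; and $\Dql$ from $\BKlz q$ via the coefficient exact sequence, using the standard descending implication $\BKlz{t}\Rightarrow\BKlz{q}$ for $q\leq t$ (the usual argument via residues on $L(s)$), which the paper uses tacitly. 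Also correct is the point that $\RES{t-2}$ implies $\RES{a-2}$ for $a\le t$, so a single $\RES{t-2}$ feeds all the applications of \ref{mainthmIfinite} for $q=1,\dots,t$, and that $t\leq 4$ is unconditional via $\RES{2}$ from \cite{CJS}.

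For $a=0$, the paper supplies no explicit argument, and you provide a legitimate one via the coefficient triangle and Theorem~\ref{thm.CTSJS}. One small imprecision: you assert a \emph{termwise} short exact sequence of the full Kato complexes, which would require divisibility of $H^q(\kappa(x),\qzl(q))$ in \emph{all} degrees $q$; the hypothesis $\BKlz t$ only guarantees this for $q\leq t$. But for the piece of the long exact sequence you actually use (around $H_0$ and $H_1$), only termwise exactness in degrees $0,1,2$ is needed, and these hold unconditionally (finiteness of residue fields, Kummer theory, Merkurjev--Suslin). So the argument is sound; you might just state that you only need exactness in low degrees. Alternatively, once $\RS{d}$ or enough $\RES{}$ is assumed, Corollary~\ref{mainthmKC2} together with the observation that $\graphHnz 0 X=\nz^{\pi_0(X)}$ gives the $a=0$ computation directly, without invoking \ref{thm.CTSJS}.
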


\begin{cor}\label{mainthmKC2}
Let $X$ be a separated scheme of finite type of dimension $d$ over
a finite field $F$. Let $q,n\geq 1$ be integers.
Assume {$\bf(RS)_{d}$} and let
$$ \graphhom a X \;:\; \KHetnz a X \to \graphHnz a X$$
be the map \eqref{KCGH} defined for the \'etale homology theory
$H^{\et}(-,\nz)$. Assume $\BKlz t$ for all primes $l|n$.
Then $\graphhom a X$ is an isomorphism for $\forall a\leq t$.
\end{cor}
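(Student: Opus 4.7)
The plan is to deduce the corollary from Theorem \ref{mainthmKC1} (which handles the smooth projective case) via a dévissage along $\dim X$, using the 5-lemma on the localization ladder for the morphism $\graphhom a X$. Under $\RS{d}$, both Kato homology $\KHetnz{a}{X}$ and graph homology $\graphHnz{a}{X}$ form homology theories on $\cC_d$ connected by the morphism $\graphhom a X$: the localization sequence for Kato homology comes from \eqref{KCexactsequence}, and for graph homology from Theorem \ref{WCJ}(1), whose proof extends verbatim to finite coefficients $\nz$ since the weight complex is coefficient-independent; the compatibility of $\graphhom a X$ with both is Theorem \ref{WCJ}(3).

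For the base case $X \in \cS$ of dimension $\leq d$: Theorem \ref{mainthmKC1} (applicable since we assume $\RS{d}$ and $\BKlz{t}$ for all primes $\ell \mid n$) gives $\KHetnz{0}{X} \cong \nz^{\pi_0(X)}$ and $\KHetnz{a}{X} = 0$ for $0 < a \leq t$. On the graph side, the weight complex of a smooth projective variety is $X$ itself, so $\graphHnz{0}{X} = \nz^{\pi_0(X)}$ and $\graphHnz{a}{X} = 0$ for $a \geq 1$; moreover $\graphhom 0 X$ is readily identified with the identity on $\nz^{\pi_0(X)}$. Hence $\graphhom a X$ is an isomorphism for all $a \leq t$.

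For the inductive step, let $X \in \cC_d$ have positive dimension. Using $\RS{d}$, pick a dense smooth open $U \subset X$ admitting a compactification $U \hookrightarrow \ol X$ where $\ol X \in \cS$ has dimension $\leq d$ and $Y = \ol X - U$ is a simple normal crossing divisor; set $Z = X - U$. By induction $\graphhom a Z$ is an isomorphism for $a \leq t$. For $U$, the log-pair $\Phi = (\ol X, Y)$ together with \eqref{KCFU} and Theorem \ref{WCJ}(2) identify $\KHetnz{a}{U}$ and $\graphHnz{a}{U}$ with the homologies of the Kato complex and graph complex of $\Phi_\bullet$ respectively, both built term-wise from the smooth projective schemes $\ol X$ and the intersections $Y^{(j)}$. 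Applying the base case to each constituent and comparing the two induced filtration spectral sequences yields that $\graphhom a U$ is an isomorphism for $a \leq t$. The 5-lemma on the localization ladder for $(Z, X, U)$ then completes the induction.

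The principal obstacle is the spectral-sequence comparison at the $U$ step: although each $\graphhom{a}{\ol X}$ and $\graphhom{a}{Y^{(j)}}$ is an isomorphism for $a \leq t$ by the base case, one must verify that the induced maps on total-complex homology remain isomorphisms in this range, using compatibility of differentials together with the uniform bound on degeneration provided by the base case. A secondary but essentially routine point is extending the weight-complex formalism of Theorem \ref{WCJ} from the $\Linfty$-coefficient setting to finite coefficients $\nz$; this presents no difficulty since the construction of the weight complex does not depend on the coefficient module.
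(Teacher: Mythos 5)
Your overall skeleton — induction on $\dim X$, reduction to a smooth dense open $U$ with SNC compactification via $\bf(RS)_d$, the five-lemma on the ladder \eqref{CDloc} for $(Z,X,U)$, and the base case via Theorem \ref{mainthmKC1} — matches what the paper's one-line proof ("by the assumed resolution of singularities and the commutative diagrams \eqref{CDloc}\dots") is gesturing at. Where you diverge is at the $U$-step: you re-derive $\graphhom a U$ being an isomorphism by comparing the two filtration spectral sequences associated to the log-pair $\Phi_\bullet$, using the base case for each $Y^{(j)}$. The paper's implicit argument is simpler and stays uniform: it applies the \emph{same} five-lemma to the second ladder \eqref{CDloc} for $(Y,\ol X,U)$, handling $\ol X$ by \ref{mainthmKC1} and $Y=\ol X-U$ by the induction hypothesis on dimension. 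Both are correct; the paper's route avoids the spectral-sequence bookkeeping entirely by never disassembling $Y$ into its strata $Y^{(j)}$, whereas yours explicitly confronts the combinatorics (which is also how \ref{mainthmI}, \ref{mainthmII} and \ref{mainthmIfinite} are proved elsewhere in the paper).

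There is one genuine gap you should close. To get $\graphhom a X$ an isomorphism for $a\leq t$ from the five-lemma on $(Z,X,U)$, you need $\graphhom{a}{Z}$ and $\graphhom{a}{U}$ isomorphisms for $a\leq t$, $\graphhom{a-1}{Z}$ injective, \emph{and} $\graphhom{a+1}{U}$ \emph{surjective}; at $a=t$ this last one is $\graphhom{t+1}{U}$, which falls outside the range "isomorphism for $a\leq t$" you establish. The missing surjectivity is available in either approach: on the localization-ladder route, $\graphHnz{t+1}{\ol X}=0$ (the graph homology of a smooth projective scheme vanishes in positive degrees), so $\graphhom{t+1}{\ol X}$ is trivially surjective, and the four-lemma for epimorphisms on the ladder for $(Y,\ol X,U)$ (using that $\graphhom{t}{Y}$ is surjective by induction and $\graphhom{t}{\ol X}$ injective by \ref{mainthmKC1}) gives $\graphhom{t+1}{U}$ surjective. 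On your spectral-sequence route it follows because the graph-side spectral sequence is concentrated on the row $b=0$, so $\graphHnz{t+1}{U}$ is detected by $E^2_{t+1,0}$, while the Kato side has $E^\infty_{t+1,0}=E^2_{t+1,0}$ (all differentials out of $(t+1,0)$ at pages $r\geq 2$ land in spots $(t+1-r,r-1)$ that are killed either by the base case, when $1\leq r-1\leq t$, or by reasons of degree when $r\geq t+2$), and $\KHetnz{t+1}{U}\twoheadrightarrow E^\infty_{t+1,0}$ always. Either patch restores the five-lemma hypothesis; without it the final deduction is incomplete.
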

\begin{proof}
By the assumed resolution of singularities and the commutative diagrams \eqref{CDloc},
\ref{mainthmKC2} is reduced to the case where $X$ is smooth projective, which follows
from \ref{mainthmKC1}.
$\square$
\end{proof}
\bigskip

Recall that \ref{mainthmIfinite} shows not only the vanishing of Kato homology
for $X$ smooth projective but also that of $\ZBU ab \infty {b+e}$ for an ample
log-pair $\Phi=(X,Y;U)$.
In order to see the consequences of this more clearly, we look at $E^1$-terms in lower
degrees associated to the homology theory $H^{\et}(-,\Lam)$ with $\Lam=\nz$:
\begin{equation*}\label{sslowdegree}
\begin{matrix}
 & \deg\;0 &&\deg\;1 &&\deg\;2 &&\cdots \\
\\
\EUnz {\bullet} 1 1\; :\quad &
0 &\gets& \sumd U 1 H_{\et}^{0}(x,\Lam(1)) &\gets&
\sumd U 2 H_{\et}^{1}(x,\Lam(2)) &\gets& \cdots \\
\\
\EUnz {\bullet} 0 1\; :\quad &
\sumd U 0 H_{\et}^{0}(x,\Lam) &\gets& \sumd U 1 H_{\et}^{1}(x,\Lam(1)) &\gets&
\sumd U 2 H_{\et}^{2}(x,\Lam(2)) &\gets& \cdots \\
\\
\EUnz {\bullet} {-1} 1\; :\quad &
\sumd U 0 H_{\et}^{1}(x,\Lam) &\gets& \sumd U 1 H_{\et}^{2}(x,\Lam(1)) &\gets&
\sumd U 2 H_{\et}^{3}(x,\Lam(2)) &\gets& \cdots \\
\end{matrix}
\end{equation*}
\bigskip

Recall
$$H_a(\EUnz {\bullet} {-1} 1)=\EUnz {\bullet} {-1} 2=\KHetnz a U.$$
We are now interested in
$$
H_a(\EUnz {\bullet} {1} 1)=\EUnz {\bullet} {1} 2 \qaq
H_a(\EUnz {\bullet} {0} 1)=\EUnz {\bullet} {0} 2.
$$
Under the assumption of the Bloch-Kato conjecture,
$\EUnz {\bullet} {1} 1$ and $\EUnz {\bullet} {0} 1$ are identified with
the following complexes:
\begin{multline*}
C^1_{\bullet}(U,\nz)\;:\;
0 \gets \sumd U 1 \CH^1(x,2;\nz)  \gets \sumd U 2 \CH^2(x,3;\nz)\gets
\sumd U 3 \CH^3(x,4;\nz) \gets  \cdots ,\\
\end{multline*}
\begin{multline*}
C^0_{\bullet}(U,\nz)\;:\;
\sumd U 0 \nz \gets \sumd U 1 \CH^1(x,1,\nz)  \\
\gets \sumd U 2 \CH^2(x,2,\nz) \gets \sumd U 3 \CH^3(x,3,\nz) \gets \cdots ,
\end{multline*}
where the terms $\sumd U a$ are in degree $a$ and
$\CH^a(x,b;\nz)$ is Bloch's higher Chow group with finite coefficient.
More precisely, we have the following (see Theorem \ref{BL} in \S6):

\begin{lem}\label{BLEC}
There are natural map of complexes
$$
C^i_{\bullet}(U,\nz) \to \EUnz {\bullet} {i} 1\qfor i=0,1.
$$
The maps are isomorphism for the terms in degrees $\leq t$ if
$\BKlz t$ holds for all primes $l|n$.
\end{lem}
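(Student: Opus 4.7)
The plan is to construct the maps of complexes by applying the Geisser--Levine étale cycle class map \eqref{ccmap} pointwise, and then to deduce the isomorphism statement from the Beilinson--Lichtenbaum property, which under $\BKlz{t}$ holds in the appropriate range.

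First, for each $x\in U_a$ the higher Chow groups of the residue field are defined by
$\CH^a(\kappa(x),b;\nz) = \varinjlim_V \CH^a(V,b;\nz)$ over non-empty open $V\subseteq \overline{\{x\}}$, and the cycle map \eqref{ccmap} passes to this limit to produce
$$
\rho^{a,b}_{\kappa(x)}\,:\, \CH^a(\kappa(x),b;\nz) \lra H^{2a-b}_{\et}(\kappa(x),\nz(a)).
$$
Taking $b=a$ (resp.\ $b=a+1$) gives the $a$-th component of $C^0_\bullet(U,\nz)\to \EUnz{\bullet}{0}{1}$ (resp.\ $C^1_\bullet(U,\nz)\to \EUnz{\bullet}{1}{1}$), in view of the identifications $\CH^a(x,a;\nz)=H^a_M(x,\nz(a))$ and $\CH^a(x,a+1;\nz)=H^{a-1}_M(x,\nz(a))$.

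Second, I would verify that these pointwise maps assemble into a morphism of complexes. The differentials in $\EUnz{\bullet}{i}{1}$ are the $d^1$-differentials of the niveau spectral sequence attached to $H^{\et}(-,\nz)[-1]$; via \eqref{Hetsmooth} and \eqref{exactsequence1} these are exactly the Bloch--Ogus--Kato residue maps on Galois cohomology of residue fields (cf.\ the complex \eqref{KCfinitefieldintro}). The differentials on $C^i_\bullet(U,\nz)$ are the parallel residue maps coming from the localization sequence for Bloch's higher Chow complexes. Compatibility of $\rho^{a,b}$ with these two sets of residue maps is precisely Theorem \ref{BL} of \S6 applied to the Bloch complex viewed as a coniveau spectral sequence; I would invoke it to conclude that the squares commute.

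Third, to upgrade to isomorphisms in degrees $\leq t$, I would appeal to the Beilinson--Lichtenbaum property for finitely generated fields $L$ over $F$: under $\BKlz{t}$ for all primes $\ell\mid n$, the comparison map
$$
H^j_M(L,\nz(a)) \isom H^j_{\et}(L,\nz(a)) \qfor j\leq a
$$
is an isomorphism (Geisser--Levine \cite{GL2}, Suslin--Voevodsky; for $\ell=p$ this is already unconditional by \cite{GL2}). Applied termwise with $L=\kappa(x)$: for $C^0_\bullet$ in degree $a\leq t$ the term is $H^a_M(\kappa(x),\nz(a))$ mapping to $H^a_{\et}$ ($j=a\leq a$, so iso); for $C^1_\bullet$ in degree $a\leq t$ the term is $H^{a-1}_M(\kappa(x),\nz(a))$ mapping to $H^{a-1}_{\et}$ ($j=a-1\leq a$, so iso). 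In both cases Beilinson--Lichtenbaum up to weight $a$ suffices, so $\BKlz{t}$ (and hence BL in degrees $\leq t$) yields the asserted isomorphism termwise.

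The principal technical point — and the step that is not purely formal — is the compatibility of $\rho^{a,b}$ with Bloch--Ogus--Kato residues, i.e.\ that the squares matching boundary maps of higher Chow complexes against $d^1$-differentials of the niveau spectral sequence for $H^{\et}(-,\nz)[-1]$ commute. This is what Theorem \ref{BL} in \S6 is set up to handle, so the bulk of the work is deferred there; modulo it, the proof of \ref{BLEC} becomes a direct application of the cycle map and Beilinson--Lichtenbaum.
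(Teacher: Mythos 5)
Your proof is essentially the elaboration that the paper itself only sketches (the paper gives no argument beyond the parenthetical ``see Theorem \ref{BL} in \S6''), and you hit the intended route: build the map termwise from the Geisser--Levine cycle map $\rho^{a,a}$ and $\rho^{a,a+1}$ on residue fields, check it is a morphism of complexes, and then apply the Beilinson--Lichtenbaum isomorphism termwise. Your degree bookkeeping is correct: $\CH^a(x,a;\nz)=H^a_M(x,\nz(a))$ with $j=a\le a$, and $\CH^a(x,a+1;\nz)=H^{a-1}_M(x,\nz(a))$ with $j=a-1\le a$, so in both cases $\BKlz{a}$ (and hence $\BKlz{t}$ for degrees $a\le t$) yields the termwise isomorphism via Theorem \ref{BL}(2).

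One small imprecision worth flagging: you assert that compatibility of the cycle map with the $d^1$-differentials ``is precisely Theorem \ref{BL} of \S6 applied to the Bloch complex viewed as a coniveau spectral sequence.'' Theorem \ref{BL} does not literally say this; it only gives the cycle map $cl^r_{\et}$ as a quasi-isomorphism of complexes of étale sheaves. What you actually need, and what makes the maps you write a morphism of complexes, is that the cycle map commutes with localization/residue sequences — which follows because $cl^r_{\et}$ is a map of sheaf complexes and both niveau spectral sequences arise from the same localization formalism (this is exactly what the paper uses, but leaves implicit, in the proof of Lemma \ref{cyclekh}: ``$\rho^{r,a+b}_X$ preserves the induced filtrations''). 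So the compatibility is a consequence of Theorem \ref{BL}(1) together with the functoriality of localization, not the content of Theorem \ref{BL} itself. This is a cosmetic overclaim rather than a gap; the underlying argument is the right one.
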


We note also that $C^0_{\bullet}(U,\nz)$ is isomorphic to the following
complex due to Nesterenko-Suslin \cite{NS} and Totaro \cite{To}

\begin{multline*}
\sumd U 0 \nz \gets \sumd U 1 K^M_1(\k(x))/n
\gets \sumd U 2 K^M_2(\k(x))/n \gets \sumd U 3 K^M_3(\k(x))/n \gets \cdots ,
\end{multline*}

Now the following result is an immediate consequence of \ref{mainthmIfinite}.

\begin{cor}\label{cormainthmIfinite}
Let $X$ be projective smooth of dimension $d$ over a finite field and
let $Y\subset X$ be a simple normal crossing divisor on $X$ such that
one of its irreducible components is an ample divisor. Put $U=X-Y$.
Let $n>1$ be an integer. Let $d=\dim(U)$.
\medbreak
\begin{itemize}
\item[(1)]
$H_0(C^0_{\bullet}(U,\nz))=CH^d(U)/n =0$ for $d\geq 2$.
\medbreak
\item[(2)]
$H_1(C^0_{\bullet}(U,\nz))=CH^d(U,1;\nz)=0$ for $d\geq 3$,
assuming $\BKlz 3$ for all primes $l|n$.
\medbreak
\item[(3)]
$H_2(C^0_{\bullet}(U,\nz))=0$ for $d\geq 4$,
assuming $\BKlz 4$ for all primes $l|n$.
\medbreak
\item[(4)]
$H_3(C^0_{\bullet}(U,\nz))\simeq H_1(C^1_{\bullet}(U,\nz))$ for $d\geq 5$,
assuming $\BKlz 5$ for all primes $l|n$ and
either of {$\bf(RS)_{d}$} or {$\bf(RES)_{3}$}.
\medbreak
\item[(5)]
$H_4(C^0_{\bullet}(U,\nz))\simeq H_2(C^1_{\bullet}(U,\nz))$ for $d\geq 6$,
assuming $\BKlz 6$ for all primes $l|n$ and
either of {$\bf(RS)_{d}$} or {$\bf(RES)_{4}$}.
\end{itemize}
\end{cor}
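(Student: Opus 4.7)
\medbreak
\noindent\textbf{Proof plan.} I plan to read each of (1)--(5) as a statement about the $E^2$-page of the niveau spectral sequence for the \'etale homology theory $H^{\et}(-,\nz)$ (so $e=1$) applied to the ample log-pair $\Phi=(X,Y;U)$, and to deduce everything from Theorem~\ref{mainthmIfinite}(1). First, by Lemma~\ref{BLEC} the Bloch--Kato hypothesis assumed in each part gives
\[
H_a(C^0_{\bullet}(U,\nz)) \;\cong\; \EUnz a 0 2, \qquad H_a(C^1_{\bullet}(U,\nz)) \;\cong\; \EUnz a 1 2
\]
in the relevant degrees. Second, ampleness together with condition {$\bf(L)$} makes $\Phi$ $H$-clean in every degree $\leq d$, so Theorem~\ref{mainthmIfinite}(1) applies (prime by prime in $\ell\mid n$) for every $q\leq d$ and yields
\[
(Z^{\infty}/B^{b+1})_{a,b}(U,\nz) \;=\; 0 \qquad\text{whenever } b\geq 0 \text{ and } a+b\leq d-2.
\]
The Bloch--Kato level stated in each of parts (2)--(5) is calibrated precisely so as to govern both the identification above and the condition {$\Dql$} at the finitely many $q$ needed, while the stated resolution hypotheses match the requirement $\bf(RES)_{q-2}$ of the theorem.

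\medbreak
The rest is spectral-sequence bookkeeping based on the shape vanishing
\[
\EUnz a b 1 \;=\; \bigoplus_{x\in U_a} H^{a-b}_{\et}(x,\nz(a)) \;=\; 0 \qquad \text{unless } -1\leq b\leq a,
\]
which holds since $H^i=0$ for $i<0$ and the theory is leveled above $e=1$. For parts (1)--(3), i.e.\ $(a,b)\in\{(0,0),(1,0),(2,0)\}$, every outgoing differential $d^r\colon \EUnz a 0 r\to\EUnz {a-r} {r-1} r$ has a target forced to vanish by this shape (either $a-r<0$, or one lands at $(0,1)$ where $E^1=0$). Hence $(Z^{\infty})_{a,0} = \EUnz a 0 1$ and the displayed vanishing at $(a,0)$ reads $\EUnz a 0 2 = 0$, as claimed.

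\medbreak
For parts (4) and (5) the same shape analysis leaves exactly one potentially nonzero outgoing differential, namely $d^2\colon\EUnz 3 0 2\to\EUnz 1 1 2$ (resp.\ $d^2\colon\EUnz 4 0 2\to\EUnz 2 1 2$), and no nonzero outgoing differentials from the target. The vanishing at the source gives $\ker d^2=0$ (injectivity), while the vanishing at the target, combined with $(Z^{\infty})_{a-2,1}=\EUnz {a-2} 1 1$, forces $(B^2)_{a-2,1}=\EUnz {a-2} 1 1$, i.e.\ $d^2$ surjects onto the target $E^2$-term. Consequently $d^2$ is an isomorphism, which produces the asserted identifications $\EUnz 3 0 2 \cong \EUnz 1 1 2$ and $\EUnz 4 0 2 \cong \EUnz 2 1 2$.

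\medbreak
The main obstacle is not conceptual but the careful bookkeeping: for each relevant bidegree one must verify that every differential $d^r$ (both incoming from the Kato region $b=-1$ and outgoing to $(a-r,r-1)$) is either trivially zero for shape reasons or accounted for by the precise $Z/B$-vanishing supplied by Theorem~\ref{mainthmIfinite}(1), and that the Bloch--Kato level assumed in each part suffices simultaneously for the identification of Lemma~\ref{BLEC} and for condition {$\Dql$} at the finitely many values of $q$ actually used.
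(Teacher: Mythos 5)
Your approach is exactly the one the paper intends (the paper gives no written argument, calling the corollary ``an immediate consequence of \ref{mainthmIfinite}''): read the complexes $C^i_\bullet(U,\nz)$ via Lemma~\ref{BLEC} as rows $b=i$ of the $E^1$-page of the niveau spectral sequence for $H^{\et}(-,\nz)$ (leveled above $e=1$), invoke the Lefschetz condition $\bf(L)$ to make the ample log-pair $H$-clean in every degree $\leq d$, and then feed the $Z^\infty/B^{b+1}$-vanishing from Theorem~\ref{mainthmIfinite}(1) into spectral-sequence bookkeeping around the bidegrees $(a,0)$ and $(a-2,1)$. That is the correct and intended route, and your conclusions in all five parts are correct.

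There is, however, a recurring technical imprecision in the bookkeeping that you should repair. You repeatedly assert $(Z^\infty)_{a,b}=E^1_{a,b}$, and the only reason you give is that outgoing differentials $d^r$ for $r\ge 2$ die for shape reasons. But $Z^1_{a,b}=\ker\bigl(d^1:E^1_{a,b}\to E^1_{a-1,b}\bigr)$ is already a proper subgroup of $E^1_{a,b}$ whenever $E^1_{a-1,b}\ne 0$; this happens at $(1,0)$, $(2,0)$, and at $(2,1)$ in part~(5), where $d^1: E^1_{2,1}\to E^1_{1,1}$ is in general nonzero. So $Z^\infty = Z^1$ (or $Z^2$ at the source in parts (4), (5)), not $E^1$. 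The argument survives because what one actually uses is $Z^\infty/B^{b+1}$, and $Z^\infty_{a,0}/B^1_{a,0}=Z^1_{a,0}/B^1_{a,0}=E^2_{a,0}$ regardless of whether $Z^1=E^1$; similarly for $(2,1)$, the vanishing of $Z^\infty_{2,1}/B^2_{2,1}=Z^1_{2,1}/B^2_{2,1}$ gives $B^2_{2,1}=Z^1_{2,1}$, which is exactly the surjectivity of $d^2: E^2_{4,0}\to E^2_{2,1}=Z^1_{2,1}/B^1_{2,1}$. (At $(1,1)$ you are safe, since $E^1_{0,1}=0$ does give $Z^1_{1,1}=E^1_{1,1}$.) Correcting the claim from $Z^\infty=E^1$ to $Z^\infty=Z^1$ (resp. $Z^2$) makes the bookkeeping airtight without changing the outcome, so this is a slip of exposition rather than a gap in the method.
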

\medbreak


\bigskip

\section{\'Etale cycle map for motivic cohomology over finite fields}
\bigskip

In this section we give an application of the results in the previous section
to \'etale cycle map for motivic cohomology over finite fields.
First we recall briefly some fundamental facts on motivic cohomology.

Fix a base field $F$. Let $X$ be a quasi-projective scheme over $F$.
For an integer $i\geq 0$, let
$$ \Delta^q =\Spec(\bZ[t_0,\dots, t_q]/(\sum_{\nu=0}^q t_\nu \; -1)$$
be the algebraic $q$-simplex. We have Bloch's cycle complex (\cite{B1})
$$
z_s(X,\bullet)\;:\; \cdots \to
z_s(X,2) @>{\partial}>> z_s(X,1) @>{\partial}>> z_s(X,0).
$$
Here $z_s(X,q)$ is the free abelian group on closed integral subschemes of
dimension $s+q$ on $\Delta^q_X:=X\times \Delta^q$ which intersect all faces
properly where a face of $\Delta^q_X$ is a subscheme defined by an equation
$t_{i_1}=\cdots t_{i_e}=0$ for some $0\leq i_1<\cdots< i_e\leq q$.
The boundary maps of $z_s(X,\bullet)$ are given by taking the alternating sum
of the pullbacks of a cycle to the faces.
The complex $z_s(X,\bullet)$ is contravariant for flat morphisms
(with appropriate shift of degree) and covariant for proper morphisms.
The higher Chow groups of $X$ (resp. with finite coefficient for an integer
$n\geq 1$) are defined by
$$
\CH_s(X,q) = H_q(z_s(X,\bullet))\quad
(\text{resp. } \CH_s(X,q;\nz) = H_q(z_s(X,\bullet)\otimes^\bL \nz))).
$$
We have an exact sequence
\begin{equation}\label{mcfces}
0\to \CH_s(X,q)/n \to \CH_s(X,q;\nz) \to \CH_s(X,q-1)[n]\to 0.
\end{equation}
Assume now that $X$ is equi-dimensional and write
$$
\CH^r(X,q) = H_q(z^r(X,\bullet)),\quad
z^r(X,\bullet)=z_{\dim(X)-r}(X,\bullet).
$$
Assuming further that $X$ is smooth over $F$, the motivic cohomology of $X$
is defined as:
$$
  H^s_M(X,\bZ(r)) = \CH^r(X,2r-s)= H_{2r-s}(z^r(X,\bullet)).
$$
The finite-coefficient versions are also defined similarly. Note
$$ H^s_M(X,\bZ(r))= \CH^r(X,2r-s) =0 \qfor s>2r.$$
\medbreak

It is known (\cite{Ge2}, Lem.3.1) that the presheaves
$$ z^r(-,\bullet)\;:\; U \to z^r(U,\bullet) $$
are sheaves for the \'etale topology on $X$.
We define the complex $\bZ(r)_X$ of sheaves on the site $X_{Zar}$
as the cohomological complex with $z^r(-,2r-i)$ placed in degree $i$.
It is shown in \cite{B1} and \cite{Ge2}, Thm.3.2 that
$H^s_M(X,\bZ(r))$ agrees with $H^s_{Zar}(X,\bZ(r)_X)$,
the hypercohomology group of $\bZ(r)_X$.
We now recall the following result on the Beilinson-Lichtenbaum conjecture
due to Suslin-Voevodsky \cite{SV} and Geisser-Levine
\cite{GL2}, Thm.1.5 and \cite{GL1}, thm.8.5.
For an integer $n>0$, let $\nz(r)$ be the object of $D^b(X_{\et})$
defined in \eqref{Tatetwist}.

\begin{thm}\label{BL}
Let $X$ be a smooth scheme over $F$.
Let $\epsilon: X_{\et} \to X_{Zar}$ be the continuous map of sites.
\begin{itemize}
\item[(1)]
There is an \'etale cycle map
$$
cl^r_{\et}\;:\; \epsilon^* \bZ(r)_X\otimes^\bL \nz \to \nz(r),
$$
which is an isomorphism in $D^-(X_{\et})$, the derived category of
bounded-above complexes of \'etale sheaves on $X$.
\item[(2)]
The map $cl^r_{\et}$ induces a map
$$
\phi^r_X \;:\; \bZ(r)_X\otimes^\bL \nz \to \tau_{\leq r} R\epsilon_* \nz(r),
$$
which is an isomorphism if $\BKlz r$ holds for all primes $l|n$.
In particular it induces
$$
H^s_M(X,\nz(r))=\CH^r(X,2r-s;\nz) \simeq H^s_{\et}(X,\nz(r))
\qfor \forall s\leq r.
$$
\end{itemize}
\end{thm}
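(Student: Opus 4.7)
The plan is to treat both parts by decomposing $n = mp^s$ with $(m,p)=1$ and $p = \ch(F)$, in accordance with the splitting $\nz(r) = \mu_m^{\otimes r} \oplus W_s\Omega^r_{X,\mathrm{log}}[-r]$ of \eqref{Tatetwist}, and handling each summand separately. The cycle map $cl^r_{\et}$ itself is constructed from the tautological classes of cycles meeting faces properly: a codimension-$r$ cycle on $\Delta^q_X$ determines a Galois-theoretic class in $H^{2r}_{\et}(\Delta^q_X, \nz(r))$ via the norm-residue symbol, and compatibility with the face boundaries in $z^r(X,\bullet)$ promotes this to a morphism of complexes in $D^-(X_{\et})$.

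For part (1), handle the prime-to-$p$ summand by the Suslin--Voevodsky strategy. By \'etale descent it suffices to check that $cl^r_{\et}$ is an isomorphism on stalks at geometric points, which for $X$ smooth over $F$ reduces to a computation over strictly Hensel local rings smooth over a separably closed field. Suslin's rigidity together with his comparison between higher Chow groups with finite prime-to-characteristic coefficients and \'etale cohomology then identifies the stalk of $\epsilon^*\bZ(r)_X \otimes^\bL \mz$ with $\mu_m^{\otimes r}$ in degree $0$. For the $p$-primary summand, invoke the Geisser--Levine theorem: one proves $\epsilon^* \bZ(r)_X \otimes^\bL \bZ/p^s\bZ \isom W_s\Omega^r_{X,\mathrm{log}}[-r]$ in $D^-(X_{\et})$, the argument proceeding via Zariski stalks where Nesterenko--Suslin--Totaro yields $\CH^r(\Spec A, r) \simeq K^M_r(A)$, a Gersten-type vanishing handles higher motivic cohomology modulo $p$, and Bloch--Gabber--Kato identifies $K^M_r(L)/p^s$ with $H^0(L, W_s\Omega^r_{L,\mathrm{log}})$ at the residue field.

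For part (2), the map $\phi^r_X$ arises by composing the adjunction unit $\bZ(r)_X \otimes^\bL \nz \to R\epsilon_* \epsilon^*(\bZ(r)_X \otimes^\bL \nz)$ with (the truncation of) the isomorphism from (1). To show $\phi^r_X$ is an isomorphism under $\BKlz{r}$ at every prime dividing $n$, one analyses the cone sheaf by sheaf: it is concentrated in degrees $> r$ exactly when the sheafifications of $L \mapsto H^s(L,\nz(r))$ for $s \leq r$ are realised by Milnor $K$-theory modulo $n$, which is the content of the norm-residue isomorphism conjecture up to degree $r$. The consequence $H^s_M(X, \nz(r)) \simeq H^s_{\et}(X, \nz(r))$ for $s \leq r$ then follows by taking Zariski hypercohomology on both sides, truncation ensuring no contribution from higher \'etale cohomology sheaves in the relevant range.

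The main obstacle is the Bloch--Kato conjecture itself (now a theorem of Voevodsky--Rost), whose proof lies well outside any self-contained argument; the $p$-primary case in part (1) is also genuinely deep, hinging on Bloch--Gabber--Kato's identification of $K^M_r(L)/p$ with logarithmic de Rham--Witt at characteristic-$p$ fields and on a careful comparison of Gersten complexes for motivic cohomology and for de Rham--Witt cohomology.
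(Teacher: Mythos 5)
The paper does not prove Theorem \ref{BL}: it states the result as a recollection and attributes it to Suslin--Voevodsky \cite{SV} and Geisser--Levine \cite{GL1}, \cite{GL2}, citing the relevant theorems directly. There is therefore no ``paper's own proof'' to compare against.

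That said, your sketch is a faithful high-level account of how those cited works establish the theorem: for (1) the prime-to-$p$ isomorphism is checked on strictly henselian stalks via Suslin rigidity and the comparison of higher Chow groups with finite coefficients with \'etale cohomology, while the $p$-primary part invokes Geisser--Levine's identification $\bZ(r)/p^s \simeq W_s\Omega^r_{\log}[-r]$ through Nesterenko--Suslin--Totaro and Bloch--Gabber--Kato; for (2) one composes with the adjunction unit and truncates, with the Bloch--Kato (norm-residue) hypothesis controlling the cone. Two small remarks. First, the Geisser--Levine $p$-primary isomorphism is actually established at the Zariski level, so the $p$-part of (2) is unconditional; this is consistent with the hypothesis since $\BKlz r$ for $\ell = p$ is the Bloch--Gabber--Kato theorem. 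Second, your one-line description of $cl^r_{\et}$ ``via the norm-residue symbol'' is loose --- the cycle class of a codimension-$r$ cycle in good position comes from the Gysin/purity machinery for \'etale cohomology, with compatibility under face maps handled by the moving results needed to make $z^r(X,\bullet)$ functorial --- but as a sketch of a result the paper merely quotes, this is an acceptable level of detail.
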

\medbreak

For $X$ smooth over $F$, we get by \ref{BL} the canonical map
from motivic cohomology to \'etale cohomology:
\begin{equation*}
\phi^{r,s}_X \;:\; H^s_M(X,\nz(r)) \to H^s_{\et}(X,\nz(r)).
\end{equation*}
We rewrite $\phi_X^{r,2r-q}$ by using higher Chow group as:
\begin{equation}\label{ccm}
\rho^{r,q}_X \;:\; \CH^r(X,q;\nz) \to H^{2r-q}_{\et}(X,\nz(r)).
\end{equation}

\begin{lem}\label{cyclekh}
Let $F$ be a finite field and $X$ be smooth of pure dimension $d$ over $F$.
Let $q\geq 0$ be an integer and assume $\BKlz {q+1}$ for all primes $l|n$.
If $r>d$, $\rho^{r,t}_X$ is an isomorphism for $\forall t\leq q$.
For $r=d$ there is a long exact sequence
\begin{multline*}
\KHetnz {q+2} X \to \CH^d(X,q;\nz) @>{\rho_X^{d,2d-q}}>>
H^{2d-q}_{\et}(X,\nz(d)) \\
\to \KHetnz {q+1} X \to \CH^d(X,q-1;\nz) @>{\rho_X^{d,2d-q+1}}>>
H^{2d-q+1}_{\et}(X,\nz(d)) \to \cdots
\end{multline*}
\end{lem}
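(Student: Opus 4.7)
The plan is to use the truncation triangle for $\epsilon\colon X_{\et}\to X_{Zar}$,
\[
\tau_{\leq r}R\epsilon_*\nz(r) \longrightarrow R\epsilon_*\nz(r) \longrightarrow \tau_{>r}R\epsilon_*\nz(r) \longrightarrow \tau_{\leq r}R\epsilon_*\nz(r)[1]\,,
\]
and read off the long exact sequence in Zariski hypercohomology. First I would record the unconditional vanishing $\mathcal{H}^i_{\et}(\nz(r))=0$ for $i>d+1$ of the Zariski sheaf on $X$ associated to $U\mapsto H^i_{\et}(U,\nz(r))$: its Bloch--Ogus Gersten resolution has $p$-th term $\bigoplus_{x\in X^{(p)}}(i_x)_*H^{i-p}(\k(x),\nz(r-p))$, and the cohomological-dimension bound $\cd_\ell\k(x)\leq\dim\overline{\{x\}}+1\leq d+1$ (valid since $F$ is finite) forces every such summand to vanish once $i-p>(d-p)+1$.

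For part~(2), take $r=d$. The vanishing just recorded identifies $\tau_{>d}R\epsilon_*\nz(d)$ with $\mathcal{H}^{d+1}_{\et}(\nz(d))[-(d+1)]$, whose Gersten resolution, reindexed by dimension $a=d-p$, reads $\bigoplus_{x\in X_a}H^{a+1}(x,\nz(a))$---this is precisely the Kato complex $\KC X$ of \eqref{KCfinitefieldintro}. Hence
\[
H^s_{Zar}\bigl(X,\tau_{>d}R\epsilon_*\nz(d)\bigr)\simeq\KHetnz{2d+1-s}{X}\,.
\]
Using $\BKlz{q+1}$ together with Theorem \ref{BL}(2), I would then identify $\bZ(d)_X\otimes^{\bL}\nz$ with $\tau_{\leq d}R\epsilon_*\nz(d)$ on the range of hypercohomology degrees appearing in the sequence, so that $H^s_{Zar}(X,\tau_{\leq d}R\epsilon_*\nz(d))=\CH^d(X,2d-s;\nz)$. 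Plugging both identifications into the long exact sequence of the triangle and setting $s=2d-q$ reproduces the sequence of~(2) with the middle arrow being $\rho^{d,q}_X$ by construction.

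For part~(1), the hypothesis $r>d$ yields $r\geq d+1$, so the same vanishing forces $\tau_{\leq r}R\epsilon_*\nz(r)\isom R\epsilon_*\nz(r)$. Combined with the Beilinson--Lichtenbaum isomorphism $\bZ(r)_X\otimes^{\bL}\nz\isom\tau_{\leq r}R\epsilon_*\nz(r)$ from Theorem \ref{BL}(2), valid in our range under $\BKlz{q+1}$, this shows that $\rho^{r,t}_X$ is an isomorphism for every $t\leq q$.

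The hardest step will be the Beilinson--Lichtenbaum identification under the weaker hypothesis $\BKlz{q+1}$ rather than $\BKlz r$: one has to verify that the natural map $\bZ(r)_X\otimes^{\bL}\nz\to\tau_{\leq r}R\epsilon_*\nz(r)$ induces an isomorphism on the specific Zariski hypercohomology groups appearing in the sequence, which I would attack by comparing niveau filtrations on both sides so that only Bloch--Kato symbols in degree $\leq q+1$ at residue fields of the relevant codimensional strata are invoked. A secondary bookkeeping task is tracking signs and boundary maps so that the Gersten resolution of $\mathcal{H}^{d+1}_{\et}(\nz(d))$ matches $\KC X$ on the nose rather than up to a sign.
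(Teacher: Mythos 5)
Your proposal is correct in substance but organizes the argument differently from the paper. You package everything through the truncation triangle $\tau_{\leq r}R\epsilon_*\nz(r) \to R\epsilon_*\nz(r) \to \tau_{>r}R\epsilon_*\nz(r)$ in $D(X_{Zar})$, invoke the Bloch--Ogus Gersten resolution to compute $H^j_{Zar}(X,\mathcal{H}^{d+1}_{\et}(\nz(d)))$ and identify it with Kato homology, and use Theorem~\ref{BL}(2) for the $\tau_{\leq r}$ piece. The paper instead compares two \emph{niveau} spectral sequences directly: the one for higher Chow groups (via Bloch--Levine localization) and the one for \'etale cohomology (via purity), both indexed over $x\in X_a$. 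This avoids the Gersten resolution theorem altogether and, more importantly, handles the hypothesis $\BKlz{q+1}$ cleanly: one only needs the local isomorphisms $\rho_x^{a+c,a+b}\colon \CH^{a+c}(x,a+b;\nz)\isom H^{a-b+2c}_{\et}(x,\nz(a+c))$ for $b\geq c$ and $a+b\leq q+1$, and these follow from Beilinson--Lichtenbaum at each residue field precisely because the relevant weight is $a+c\leq a+b\leq q+1$. You correctly flag that your hardest step is justifying the global identification $H^s_{Zar}(X,\bZ(r)_X\otimes^\bL\nz)\cong H^s_{Zar}(X,\tau_{\leq r}R\epsilon_*\nz(r))$ under the weaker hypothesis $\BKlz{q+1}$ rather than $\BKlz{r}$, and your remedy---a niveau-filtration comparison on both sides---is in effect a reconstruction of the paper's argument inside your own framework. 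So the two routes buy roughly the same thing: yours is a cleaner derived-category picture at the cost of leaning on Bloch--Ogus and a not-quite-off-the-shelf truncated Beilinson--Lichtenbaum; the paper's stays entirely at the level of niveau spectral sequences and resolves the coefficient hypothesis without appealing to the Gersten resolution.
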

\begin{proof}
Write $c=r-d$.
By the localization theorem for higher Chow groups (\cite{B2} and \cite{L}),
we have the niveau spectral sequence
$$
{^{CH}E}^1_{a,b}= \underset{x\in X_a}{\bigoplus} \CH^{a+c}(x,a+b;\nz)
\Rightarrow \CH^{r}(X,a+b;\nz).
$$
By the purity for \'etale cohomology, we have the niveau spectral sequence
$$
{^{\et}E}^1_{a,b}= \underset{x\in X_a}{\bigoplus} H^{a-b+2c}_{\et}(x,\nz(a+c))
\Rightarrow H^{2r-a-b}_{\et}(X,\nz(r)).
$$
The cyle map $\rho^{r,a+b}_X$ preserves the induced filtrations
and induces maps on $E^\infty_{a,b}$ compatible with
the cycle maps for $x\in X_a$:
$$
\rho_x^{a+c,a+b}\;:\; \CH^{a+c}(x,a+b;\nz) \to H^{a-b+2c}_{\et}(x,\nz(a+c)).
$$
By \ref{BL} $\BKlz {q+1}$ for all primes $l|n$ imply that
$\rho_x^{a+c,a+b}$ is an isomorphism if $b\geq c$ and $a+b \leq q+1$.
We note that ${^{CH}E}^1_{a,b}=0$ for $b<c$ and
$^{\et}E^1_{a,b}=0$ for $b < 2c-1$ since for $x\in X_a$,
$cd(\kappa(x))=a+1$ and $\DWtlog u x =0$ for $u>a$.
In case $c\geq 1$ it implies that $\rho^{r,a+b}_X$ induces
${^{CH}E}^\infty_{a,b} \simeq {^{\et}E}^\infty_{a,b}$ for $a+b\leq q$.
In case $c=0$ it implies that we have an exact sequence:
\begin{multline*}
{^{\et}E}^2_{q+2,-1} \to \CH^d(X,q;\nz) @>{\rho_X^{d,2d-q}}>>
H^{2d-q}_{\et}(X,\nz(d)) \\
\to {^{\et}E}^2_{q+1,-1} \to \CH^d(X,q-1;\nz) @>{\rho_X^{d,2d-q+1}}>>
H^{2d-q+1}_{\et}(X,\nz(d)) \to \cdots
\end{multline*}
This completes the proof of the lemma since $E^2_{a,-1}=\KHetnz {a} X$
by definition.
\end{proof}
\medbreak

Note that Theorem \ref{cor1intro} follows immediately from
Theorem \ref{mainthmKC1} and Lemma \ref{cyclekh}.

\begin{thm}\label{finiteness}
Let $F$ be a finite field of characteristic $p$. Let $X$ be a quasi-projective
equidimensional scheme of pure dimension $d$ over $F$.
\begin{itemize}
\item[(1)]
Assume $r>d$ and $\BKlz {q+1}$ for all primes $l|n$.
Then $\CH^r(X,t;\nz)$ is finite for $\forall t\leq q$.
\item[(2)]
Assume {$\bf(RS)_{d}$} and $\BKlz {q+2}$ for all primes $l|n$.
Then $\CH^d(X,t;\nz)$ is finite for $\forall t\leq q$.
\end{itemize}
\end{thm}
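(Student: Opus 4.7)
The plan is to reduce both parts to the finiteness of \'etale cohomology of schemes of finite type over a finite field (Deligne's finiteness theorem), using Lemma~\ref{cyclekh} for part~(1) and Theorem~\ref{cor1intro} for part~(2), together with Bloch's localization long exact sequence for higher Chow groups and an induction on $d=\dim X$.

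First, since Bloch's cycle complex is generated by integral subschemes, we may replace $X$ by $X_{\mathrm{red}}$. Let $U\subseteq X$ be the smooth locus, a dense open subscheme of pure dimension $d$, and let $Z=X\setminus U$ with reduced structure, so $\dim Z<d$. Bloch's localization theorem gives a long exact sequence
\begin{equation*}
\cdots \to \CH_{d-r}(Z,t;\nz) \to \CH^r(X,t;\nz) \to \CH^r(U,t;\nz) \to \CH_{d-r}(Z,t-1;\nz) \to \cdots,
\end{equation*}
and after stratifying $Z$ into pure-dimensional locally closed pieces and iterating localization, an induction on $d$ reduces both parts to proving finiteness of $\CH^r(U,t;\nz)$ for $U$ smooth quasi-projective of pure dimension $d$. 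In part~(2) this uses that $\RS{d}$ descends to $\RS{d'}$ for every $d'\leq d$, so the induction hypothesis is available at every step.

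For part~(1), $U$ is smooth of dimension $d<r$, so Lemma~\ref{cyclekh} supplies an isomorphism $\CH^r(U,t;\nz)\isom H^{2r-t}_{\et}(U,\nz(r))$ for all $t\leq q$ under $\BKlz{q+1}$. Since $U$ is of finite type over the finite field $F$ and $\nz(r)$ is a bounded complex of constructible \'etale sheaves of $\nz$-modules, the right-hand side is finite by Deligne's finiteness theorem, settling part~(1).

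For part~(2) we have $r=d$, and $\RS{d}$ provides an open immersion $U\hookrightarrow\ol X$ with $\ol X$ smooth projective of pure dimension $d$ and $Y:=\ol X\setminus U$ a simple normal crossing divisor of dimension $d-1$. A second application of Bloch's localization yields
\begin{equation*}
\cdots \to \CH_0(Y,t;\nz) \to \CH^d(\ol X,t;\nz) \to \CH^d(U,t;\nz) \to \CH_0(Y,t-1;\nz) \to \cdots .
\end{equation*}
The middle term $\CH^d(\ol X,t;\nz)$ is finite by Theorem~\ref{cor1intro}, whose hypotheses $\BKlz{q+2}$ and $\RS{d}$ are in force. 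The outer terms $\CH_0(Y,\cdot;\nz)$ are finite by the inductive hypothesis applied to the $(d-1)$-dimensional scheme $Y$. Hence $\CH^d(U,t;\nz)$ is finite, completing the induction and the proof. The main technical point is the routine but mildly tedious bookkeeping needed to phrase the induction uniformly enough to handle the non-equidimensional closed complements that appear along the way; the substantive mathematical content is entirely contained in Lemma~\ref{cyclekh} and Theorem~\ref{cor1intro}.
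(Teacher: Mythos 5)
Your proof is correct and follows essentially the same overall strategy as the paper: replace $X$ by its reduction, reduce to the smooth case via Bloch's localization sequence and induction on dimension, and then combine the \'etale cycle map with finiteness of \'etale cohomology. The one genuine divergence is in how you handle the smooth case of part~(2). The paper applies Lemma~\ref{cyclekh} directly to the smooth quasi-projective $U$ and concludes finiteness of $\CH^d(U,t;\nz)$ because, under $\RS{d}$, Corollary~\ref{mainthmKC2} identifies the Kato homology groups $\KHetnz{q+1}{U}$, $\KHetnz{q+2}{U}$ with the graph homology, which is manifestly finite ($\nz$-modules indexed by connected components); there is no need to pass to a compactification. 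You instead use $\RS{d}$ to embed $U$ in a smooth projective $\ol X$ with SNC boundary $Y$, apply Theorem~\ref{cor1intro} to $\ol X$, and run one extra localization against $Y$ together with the inductive hypothesis in dimension $d-1$. Both routes work and ultimately rest on the same ingredients (Lemma~\ref{cyclekh} plus the vanishing/finiteness of Kato homology coming from the Lefschetz-type results); the paper's is slightly shorter because it exploits that Corollary~\ref{mainthmKC2} is available for open smooth varieties, not just projective ones. One small inaccuracy worth fixing: invoking ``Deligne's finiteness theorem'' for $H^{\ast}_{\et}(U,\nz(r))$ covers only the prime-to-$p$ part of the coefficients; for $p\mid n$ with $p=\ch F$ the complex $\nz(r)$ involves logarithmic de Rham--Witt sheaves, and the required finiteness (valid here because $r\geq d$) is due to Moser, as the paper notes.
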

\begin{proof}
In fact we show the finiteness of $\CH_s(X,q;\nz)$ for $s<0$ in (1) and that
for $s=0$ in (2) without assuming that $X$ is equi-dimensional.
In case $X$ is smooth over $F$, \ref{finiteness} follows from
\ref{cyclekh} and \ref{mainthmKC2} in view of finiteness of
\'etale cohomology $H^s_{\et}(X,\nz(r))$ (For the prime-to-p part
it follows from SGA4$\frac{1}{2}$, Th. finitude. For the $p$-part we need
assume $r\geq d$ and it follows from a result of Moser \cite{Mo}).
We now proceed by the induction on $\dim(X)$.
Assume \ref{finiteness} is proved in dimension$<d$.
Take a closed subscheme $Z\subset X$ such that $U:=X-Z$ is smooth over $F$
and dense in $X$.
We have the localization exact sequence (\cite{B2} and \cite{L})
$$
\CH_s(Z,t;\nz) \to \CH_s(X,t;\nz) \to \CH_s(U,t;\nz).
$$
This completes the proof by induction.
$\square$
\end{proof}

\bigskip

\bigskip

\small{
\noindent
Uwe Jannsen\\
NWF I - Mathematik\\
Universit\"at Regensburg\\
Universit\"atsstr. 31\\
93053 Regensburg\\
Germany\\
email: uwe.jannsen@mathematik.uni-regensburg.de

\medskip

\noindent
Shuji Saito\\
Graduate School of Mathematics\\
University of Tokyo\\
Komaba, Meguro-ku\\
Tokyo 153-8914\\
Japan\\
email: sshuji@ms.u-tokyo.ac.jp
}


\end{document}